\documentclass{article}

\usepackage[utf8]{inputenc}

\usepackage{mathtools}
\usepackage{amsthm}
\usepackage{todonotes}
\usepackage{xifthen}
\usepackage{amssymb}
\usepackage{hyperref}
\hypersetup{
    colorlinks=true,
    linkcolor=blue,
    citecolor=blue,
    urlcolor=blue
}
\usepackage{zref-clever}
\usepackage{tikz}
\usetikzlibrary{calc}
\tikzset{
	plainvertex/.style={circle, draw, inner sep=1.6pt},
	rootvertex/.style={circle, draw, fill=orange!70!red, inner sep=1.3pt},
	vvertex/.style={circle, draw, fill=blue!55!black, inner sep=1.6pt},
	wvertex/.style={circle, draw, fill=orange!70!red, inner sep=1.6pt},
	lbl/.style={font=\small},
}
\usepackage{subcaption}
\usepackage[backend=biber,style=alphabetic,maxnames=8,maxalphanames=8,sorting=anyt]{biblatex}
\usepackage{aliascnt}

\newtheorem{theorem}{Theorem}[section]

\newaliascnt{definition}{theorem}
\newtheorem{definition}[definition]{Definition}
\aliascntresetthe{definition}

\newaliascnt{corollary}{theorem}
\newtheorem{corollary}[corollary]{Corollary}
\aliascntresetthe{corollary}

\newaliascnt{proposition}{theorem}
\newtheorem{proposition}[proposition]{Proposition}
\aliascntresetthe{proposition}

\newaliascnt{lemma}{theorem}
\newtheorem{lemma}[lemma]{Lemma}
\aliascntresetthe{lemma}

\newtheorem{notation}{Notation}

\newaliascnt{claim}{theorem}

\aliascntresetthe{claim}

\newaliascnt{observation}{theorem}

\aliascntresetthe{observation}

\newaliascnt{fact}{theorem}

\aliascntresetthe{fact}

\newaliascnt{question}{theorem}

\aliascntresetthe{question}

\newaliascnt{remark}{theorem}
\newtheorem{remark}[remark]{Remark}
\aliascntresetthe{remark}

\DeclarePairedDelimiter{\pars}{(}{)}
\DeclarePairedDelimiter{\set}{\{}{\}}
\DeclarePairedDelimiter{\card}{|}{|}
\DeclarePairedDelimiter{\coeff}{[}{]}

\bibliography{references}

\newcommand{\defeq}{\coloneqq}

\newcommand{\N}{\mathbb{N}}%{N}
\newcommand{\R}{\mathbb{R}}

\newcommand{\disjointUnion}{\mathop{\sqcup}\displaylimits}
\newcommand{\largeDisjointUnion}{\mathop{\bigsqcup}\displaylimits}

\newcommand{\graphF}{\ensuremath{F}}
\newcommand{\graphG}{\ensuremath{G}}
\newcommand{\graphH}{\ensuremath{H}}
\newcommand{\graphI}{\ensuremath{I}}
\newcommand{\graphJ}{\ensuremath{J}}
\newcommand{\vertices}{\ensuremath{V}}
\newcommand{\edges}{\ensuremath{E}}
\newcommand{\numberOfEdges}{\ensuremath{k}}
\newcommand{\numberOfVertices}{\ensuremath{n}}

\newcommand{\successorClosedSubsets}[1]{\ensuremath{\mathcal{U}\pars*{#1}}}

\newcommand{\vertexT}{\ensuremath{t}}
\newcommand{\vertexU}{\ensuremath{u}}
\newcommand{\vertexV}{\ensuremath{v}}
\newcommand{\vertexW}{\ensuremath{w}}

\newcommand{\rev}[1]{\overleftarrow{#1}}
\newcommand{\starGraph}[1]{\ensuremath{S_{#1}}}
\newcommand{\cycleRootedStar}[1]{\ensuremath{{S_{#1}}}}
\newcommand{\connect}[2]{\ensuremath{#1 \to #2}}

\newcommand{\concentratedCycleRootedStar}[2]{\ensuremath{S_{#1, #2}}}
\newcommand{\subsetOfVertices}{Q}
\newcommand{\pathGraph}[1]{P_{#1}}
\newcommand{\cycleRootedPath}[1]{P_{#1}}
\newcommand{\concentratedCycleRootedPath}[2]{P_{#1, #2}}
\newcommand{\pathLength}{p}
\newcommand{\pathLengths}{\mathbf{p}}

\newcommand{\vertexrootedtree}{%
	\def\leaveslist{2,1,3}% number of leaves under each level-1 node
	\def\R{1.2}% radius of level 1
	\def\Rr{2.2}% radius of level 2
	\node[rootvertex] (root) at (0,0) {};
	\pgfmathsetmacro{\n}{3}% = length of \leaveslist
	\foreach [count=\i] \deg in \leaveslist {
		\pgfmathsetmacro{\ang}{90 + (\i-1)*360/\n - 360/(2*\n)}
		\node[plainvertex] (u\i) at (\ang:\R) {};
		\draw[->] (u\i) -- (root);
		\pgfmathtruncatemacro{\degint}{\deg}
		\ifnum\degint>0
			\foreach \j in {1,...,\degint} {
				\pgfmathsetmacro{\spread}{40}
				\pgfmathsetmacro{\suba}{\ang - \spread/2 + (\j-1)*\spread/max(\degint-1,1)}
				\node[plainvertex] (v\i-\j) at (\suba:\Rr) {};
				\draw[->] (v\i-\j) -- (u\i);
			}
		\fi
	}
}

\newcommand{\cyclerootedtree}{%
	\def\attachlist{2,0,1,3,1}% number of direct children of u_1..u_5
	\def\Rc{1.4}% cycle radius
	\def\Ra{2.6}% radius for direct children
	\pgfmathsetmacro{\ell}{5}
	\foreach [count=\i] \deg in \attachlist {
		\pgfmathsetmacro{\ang}{90 - (\i-1)*360/\ell}
		\node[rootvertex] (u\i) at (\ang:\Rc) {};
	}
	\foreach \i in {1,...,4} {
		\pgfmathtruncatemacro{\ni}{\i+1}
		\draw[->] (u\i) -- (u\ni);
	}
	\draw[->] (u5) -- (u1);
	\foreach [count=\i] \deg in \attachlist {
		\pgfmathsetmacro{\ang}{90 - (\i-1)*360/\ell}
		\pgfmathtruncatemacro{\degint}{\deg}
		\ifnum\degint>0
			\foreach \j in {1,...,\degint} {
				\pgfmathsetmacro{\spread}{50}
				\pgfmathsetmacro{\suba}{\ang - \spread/2 + (\j-1)*\spread/max(\degint-1,1)}
				\node[plainvertex] (a\i-\j) at (\suba:\Ra) {};
				\draw[->] (a\i-\j) -- (u\i);
			}
		\fi
	}
	\node[plainvertex] (b1) at ($(a1-1)+(90-18:1.0)$) {};
	\draw[->] (b1) -- (a1-1);
}

\newcommand{\siblingLiftCommon}{%
	\node[plainvertex] (r) at (0,2.4) {};
	\node[lbl] at (0.35,2.4) {$r$};
	\node[wvertex] (w) at (0,1.2) {};
	\node[lbl] at (0.4,1.2) {$\vertexW$};
	\node[vvertex] (v) at (1.4,0.2) {};
	\node[lbl] at (1.75,0.2) {$\vertexV$};
	\node[plainvertex] (z) at (-1.4,0.2) {};
	\node[plainvertex] (u1) at (-0.9,-1.2) {};
	\node[plainvertex] (u2) at (0,-1.5) {};
	\node[plainvertex] (u3) at (0.9,-1.2) {};
	\draw[->] (w) -- (r);
	\draw[->] (v) -- (w);
	\draw[->] (z) -- (w);
}

\newcommand{\moveLiftBefore}{%
	\node[wvertex] (w) at (90:1.4) {};
	\node[lbl] at ($(90:1.4)+(0.4,0.15)$) {$\vertexW$};
	\node[vvertex] (v) at (210:1.4) {};
	\node[lbl] at ($(210:1.4)+(-0.4,-0.05)$) {$\vertexV$};
	\node[plainvertex] (p) at (330:1.4) {};
	\draw[->] (v) -- (w);
	\draw[->] (w) -- (p);
	\draw[->] (p) -- (v);
	\node[plainvertex] (aw) at ($(90:1.4)+(90:1.1)$) {};
	\draw[->] (aw) -- (w);
	\node[plainvertex] (av1) at ($(210:1.4)+(160:1.1)$) {};
	\node[plainvertex] (av2) at ($(210:1.4)+(230:1.1)$) {};
	\draw[->] (av1) -- (v);
	\draw[->] (av2) -- (v);
	\node[plainvertex] (ap) at ($(330:1.4)+(-30:1.1)$) {};
	\draw[->] (ap) -- (p);
}

\newcommand{\moveLiftAfter}{%
	\node[wvertex] (w) at (90:1.4) {};
	\node[lbl] at ($(90:1.4)+(0.4,0.15)$) {$\vertexW$};
	\node[vvertex] (v) at (210:1.4) {};
	\node[lbl] at ($(210:1.4)+(-0.4,-0.05)$) {$\vertexV$};
	\node[plainvertex] (p) at (330:1.4) {};
	\draw[->] (v) -- (w);
	\draw[->] (w) -- (p);
	\draw[->] (p) -- (v);
	\node[plainvertex] (aw) at ($(90:1.4)+(70:1.1)$) {};
	\draw[->] (aw) -- (w);
	\node[plainvertex] (av1) at ($(90:1.4)+(110:1.1)$) {};
	\node[plainvertex] (av2) at ($(90:1.4)+(150:1.1)$) {};
	\draw[->] (av1) -- (w);
	\draw[->] (av2) -- (w);
	\node[plainvertex] (ap) at ($(330:1.4)+(-30:1.1)$) {};
	\draw[->] (ap) -- (p);
}

\newcommand{\vertexSymbol}{\circ}
\newcommand{\cycleSymol}{{\scalebox{0.6}{$\circlearrowleft$}}}

\newcommand{\cycleLength}[1]{\ensuremath{\ifthenelse{\isempty{#1}}{\ell}{\ell_{#1}}}}
\newcommand{\treeRootedAt}[1]{{\vertexRootedtree{#1}}}
\newcommand{\cycle}[1]{\ensuremath{\ifthenelse{\isempty{#1}}{C}{C_{#1}}}}
\newcommand{\tree}[1]{\ensuremath{\ifthenelse{\isempty{#1}}{T}{T_{#1}}}}
\newcommand{\cycleRootedtree}[1]{\ensuremath{\tree{#1}^{\cycleSymol}}}
\newcommand{\vertexRootedtree}[1]{\ensuremath{\tree{#1}^{\vertexSymbol}}}
\newcommand{\treeSize}[1]{\ensuremath{\ifthenelse{\isempty{#1}}{\partitionElement}{\partitionElement_{#1}}}}
\newcommand{\treeInternalSize}[1]{\ensuremath{\ifthenelse{\isempty{#1}}{s}{s_{#1}}}}
\newcommand{\treeInternalSizes}[1]{\ensuremath{\ifthenelse{\isempty{#1}}{\mathbf{s}}{\mathbf{s}_{#1}}}}
\newcommand{\vertexRootedTreeSize}[1]{\ensuremath{\ifthenelse{\isempty{#1}}{\partitionElement^{\vertexSymbol}}{\partitionElement^{\vertexSymbol}_{#1}}}}
\newcommand{\vertexRootedTreeSizes}{\mathbf{\partitionElement}^{\vertexSymbol}}
\newcommand{\cycleRootedTreeSize}[1]{\ensuremath{\ifthenelse{\isempty{#1}}{\partitionElement^{\cycleSymol}}{\partitionElement^{\cycleSymol}_{#1}}}}
\newcommand{\treeSizes}[1]{\ensuremath{\ifthenelse{\isempty{#1}}{\mathbf{\partitionElement}}{\mathbf{\partitionElement}_{#1}}}}

\newcommand{\numberOfCycleRootedTree}{\ensuremath{s^{\cycleSymol}}}
\newcommand{\numberOfVertexRootedTree}{\ensuremath{s^{\vertexSymbol}}}
\newcommand{\functionalGraphDecomposition}{\ensuremath{\left(\largeDisjointUnion_{i = 1}^{\numberOfVertexRootedTree} {\vertexRootedtree{i}}\right) \disjointUnion \left(\largeDisjointUnion_{i = 1}^{\numberOfCycleRootedTree} {\cycleRootedtree{i}}\right)}}

\newcommand{\generatingFunction}[2]{\ensuremath{A_{#1}\ifthenelse{\isempty{#2}}{}{\pars*{#2}}}}
\newcommand{\generatingFunctionB}[2]{\ensuremath{B_{#1}\ifthenelse{\isempty{#2}}{}{\pars*{#2}}}}
\newcommand{\generatingFunctionC}[2]{\ensuremath{C_{#1}\ifthenelse{\isempty{#2}}{}{\pars*{#2}}}}
\newcommand{\variable}{\ensuremath{x}}
\newcommand{\polySmaller}{\preceq}
\newcommand{\polyGreater}{\succeq}
\newcommand{\degree}{\ensuremath{t}}

\newcommand{\siblingsLift}[2]{\mathsf{SLift}\ifthenelse{\isempty{#1}}{}{\pars*{#1, #2}}}
\newcommand{\moveLift}[2]{\mathsf{MLift}\ifthenelse{\isempty{#1}}{}{\pars*{#1, #2}}}
\newcommand{\alternativeCountingPolynomialU}[1]{\generatingFunctionB{\vertexU}{#1}}

\newcommand{\alternativeCountingPolynomialW}[1]{\generatingFunctionB{\vertexW}{#1}}
\newcommand{\differenceAtW}[1]{\Delta_{\vertexW}\ifthenelse{\isempty{#1}}{}{\pars*{#1}}}
\newcommand{\differenceGraph}[1]{\Delta_{\graphG}\ifthenelse{\isempty{#1}}{}{\pars*{#1}}}
\newcommand{\differenceVertexRootedTree}[1]{\Delta_{\treeRootedAt{}}\ifthenelse{\isempty{#1}}{}{\pars*{#1}}}
\newcommand{\differenceCycleRootedTree}[1]{\Delta_{\cycleRootedtree{}}\ifthenelse{\isempty{#1}}{}{\pars*{#1}}}
\newcommand{\positiveProportional}{\propto_+}
\newcommand{\differenceAtVertex}[2]{\Delta_{#1}\ifthenelse{\isempty{#2}}{}{\pars*{#1}}}
\newcommand{\difference}[1]{\Delta\ifthenelse{\isempty{#1}}{}{\pars*{#1}}}

\newcommand{\leafRemove}[2]{\mathsf{LeafRemove}\ifthenelse{\isempty{#1}}{}{\pars*{#1, #2}}}
\newcommand{\cycleExtend}[2]{\mathsf{CycleExtend}\ifthenelse{\isempty{#1}}{}{\pars*{#1, #2}}}

\newcommand{\cycleLengthVector}{\ensuremath{\boldsymbol\ell}}
\newcommand{\cycleLengthSpace}{\ensuremath{D}}
\newcommand{\graphIOf}[1]{\ensuremath{I\pars*{#1}}}

\newcommand{\partitionsOf}{\vdash}
\newcommand{\partitionElement}{\ensuremath{d}}

\newcommand{\cycleIndicator}[1]{\ifthenelse{\isempty{#1}}{\mathbf{c}}{c_{#1}}}
\newcommand{\parameterizedGraph}[1]{\ifthenelse{\isempty{#1}}{\graphJ}{\graphJ\pars*{#1}}}
\newcommand{\edgeCount}[1]{\ifthenelse{\isempty{#1}}{e}{e\pars*{#1}}}
\newcommand{\smallerElement}{a}
\newcommand{\largerElement}{b}
\newcommand{\maximalTreeInnerSizes}{\treeSizes{}^*}

\newcommand{\objective}[1]{\Psi\ifthenelse{\isempty{#1}}{}{\pars*{#1}}}

\newcommand{\numberOfPairs}{n}
\newcommand{\setA}{A}
\newcommand{\setB}{B}
\newcommand{\setsSize}{m}
\newcommand{\indicesSet}{Q}
\newcommand{\indicesSetSize}{q}
\newcommand{\maximalGeneartingFunction}[1]{\Psi^*\ifthenelse{\isempty{#1}}{}{\pars*{#1}}}

\newcommand{\hammingWeight}[1]{\lvert#1\rvert_H}

\newcommand{\defemph}[1]{\textbf{\emph{#1}}}

\title{Counting Successor-Closed Subsets of Functional Digraphs}
\author{Mathias Marty}
\date{\today}

\begin{document}

\maketitle

\begin{abstract}
	A functional digraph is a directed graph where each vertex has an out-degree of at most $1$.
	We study the number of \emph{successor-closed} subsets of a functional digraph, that is, subsets from which no edge leaves.
	We show that functional digraphs have a simple recursive formula for their corresponding generating function.
	Using this formula, we determine, among all functional digraphs with a fixed number of vertices and edges, the one that maximizes and the one that minimizes the number of successor-closed subsets of every size simultaneously.
	Somewhat unexpectedly, this extremal result yields a quantitative strengthening of the \emph{set-pairs inequality} of Bollob\'as: rather than merely guaranteeing that some pair of a large enough family must violate the hypothesis of the theorem, we show that a uniformly random subset of the family witnesses a violation with high probability, quantitatively in terms of how far the family size exceeds the classical threshold.
	We further show that the same approach applies to the \emph{skew variant} of Bollob\'as's inequality due to Heged\H{u}s and Frankl, yielding an analogous probabilistic strengthening.
\end{abstract}

\section{Introduction}

A functional directed graph (digraph), which we define as a graph where every vertex has an out-degree of at most one, decomposes canonically into vertex-rooted trees and cycle-rooted trees (\zcref[S]{pro:functional-graph-as-cycle-rooted-forest}).
Vertex-rooted trees are standard trees where the edges flow from the leaves to the root.
On the other hand, cycle-rooted trees are composed of a cycle, and each of its vertices is the root of a vertex-rooted tree.
In this paper, we study a natural enumerative property of functional digraphs: the number of \emph{successor-closed} subsets, that is, subsets $\subsetOfVertices$ of vertices from which no edge leaves.
The decomposition into vertex-rooted and cycle-rooted trees yields a simple recursive formula for the \emph{successor-closed counting polynomial} $\generatingFunction{\graphG}{}$, whose coefficient of $\variable^\indicesSetSize$ counts the successor-closed subsets of size $\indicesSetSize$ (\zcref[S]{thm:successor-closed-counting-polynomial-for-functional-digraph}).
We use this formula to solve an extremal problem: among all functional digraphs with a fixed number of vertices and edges, which one maximizes and which one minimizes the number of successor-closed subsets of every size simultaneously?

\begin{theorem}[Informal, see {\zcref[S]{thm:upper-bound-extremal-2}}]
	\label{thm:informal-extremal}
	If $\graphG$ is functional, has $\numberOfVertices$ vertices, and $\numberOfEdges$ edges, the number of $\indicesSetSize$-subsets of vertices without any outgoing edge is at least
	\begin{equation*}
		\binom{\numberOfVertices - \numberOfEdges}{\indicesSetSize} + \binom{\numberOfVertices - \numberOfEdges}{\indicesSetSize - \numberOfEdges}
	\end{equation*}
	and at most
	\begin{equation*}
		\binom{\numberOfVertices - \numberOfEdges - 1}{\indicesSetSize} + \binom{\numberOfVertices - 1}{\indicesSetSize - 1}\enspace.
	\end{equation*}
\end{theorem}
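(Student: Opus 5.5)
The plan is to exhibit the two extremal graphs explicitly and then show, by local operations that change the successor-closed counting polynomial monotonically in the coefficientwise order $\polySmaller$, that every functional digraph with $\numberOfVertices$ vertices and $\numberOfEdges$ edges lies between them.

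\textbf{The extremal graphs.} A functional digraph with $\numberOfEdges$ edges has exactly $\numberOfVertices-\numberOfEdges$ vertices of out-degree $0$, that is, $\numberOfVertices-\numberOfEdges$ vertex-rooted trees. The recursion of \zcref[S]{thm:successor-closed-counting-polynomial-for-functional-digraph} gives three rules. A vertex-rooted tree with root children $c_1,\dots,c_m$ has polynomial $1+\variable\prod_j \generatingFunction{\tree{c_j}}{\variable}$. A cycle-rooted tree with cycle length $\ell$ has polynomial $1+\variable^{\ell}\prod \generatingFunction{\tree{c}}{\variable}$, the product running over the trees hanging off the cycle. Disjoint unions multiply.
\begin{itemize}
\item The lower bound is the coefficient of $\variable^{\indicesSetSize}$ in $(1+\variable)^{\numberOfVertices-\numberOfEdges}(1+\variable^{\numberOfEdges})$. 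This is $\numberOfVertices-\numberOfEdges$ isolated vertices together with one directed $\numberOfEdges$-cycle.
\item The upper bound is the coefficient of $\variable^{\indicesSetSize}$ in $(1+\variable)^{\numberOfVertices-\numberOfEdges-1}\bigl(1+\variable(1+\variable)^{\numberOfEdges}\bigr)$. This is $\numberOfVertices-\numberOfEdges-1$ isolated vertices together with an in-star with $\numberOfEdges$ leaves.
\end{itemize}
Since all polynomials involved have nonnegative coefficients, multiplying by such a polynomial preserves $\polySmaller$. So it suffices to compare one or two components at a time while keeping the vertex and edge counts fixed.

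\textbf{Upper bound.} I would carry out four steps.
\begin{enumerate}
\item Apply $\siblingsLift{}{}$ inside every vertex-rooted tree: reattach the subtree of $\vertexV$ from its parent $\vertexW$ to $\vertexW$'s parent. Each application should not decrease the polynomial, by an inequality of the form $1+\variable AB\polySmaller\dots$ derived from the recursion. Iterating turns every tree into an in-star.
\item Likewise use $\moveLift{}{}$ on cycle-rooted trees. This moves all hanging vertices to be direct children of a single cycle vertex, and then shortens the cycle by turning cycle vertices into leaves, as far as the edge count allows.
\item Merge two stars with $a$ and $b$ leaves into one star with $a+b$ leaves plus an isolated vertex. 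Expanding both polynomials, the required inequality reduces exactly to
\begin{equation*}
1+(1+\variable)^{a+b}\polyGreater(1+\variable)^a+(1+\variable)^b,
\end{equation*}
that is, $\bigl((1+\variable)^a-1\bigr)\bigl((1+\variable)^b-1\bigr)\polyGreater 0$. This holds trivially.
\item Absorb each cycle-rooted component into a star, using one of the $\numberOfVertices-\numberOfEdges\geq 1$ roots. The case $\numberOfEdges=\numberOfVertices$ is degenerate and handled separately, with $\binom{-1}{\indicesSetSize}$ read as $0$.
\end{enumerate}

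\textbf{Lower bound.} I would run the dual procedure.
\begin{enumerate}
\item Two cycles of lengths $a$ and $b$ satisfy $(1+\variable^a)(1+\variable^b)\polyGreater 1+\variable^{a+b}$. This is exactly the comparison of two cycles on $a+b$ vertices with a single $(a+b)$-cycle, since both have $a+b$ vertices and $a+b$ edges. Hence cycles merge downward.
\item For non-trivial trees I would use $\leafRemove{}{}$ and $\cycleExtend{}{}$. Detach a leaf, making it an isolated vertex and removing one edge. Insert a new vertex into the cycle, adding an edge. Since the isolated vertex must come from somewhere, the combined move turns a leaf into a cycle vertex and an edge into a root, and I would show this does not increase the polynomial.
\item Once all trees are isolated vertices and all hanging vertices have been pushed into the cycle, merging cycles yields the claimed minimizer.
\end{enumerate}

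\textbf{Main obstacle.} The step I expect to be hardest is proving the two local monotonicity lemmas ($\siblingsLift{}{}$/$\moveLift{}{}$ for the maximum, $\leafRemove{}{}$ combined with $\cycleExtend{}{}$ for the minimum) in full generality. The subtree being moved sits inside an arbitrary product, so one needs a clean inequality of the shape
\begin{equation*}
1+\variable A(1+\variable B)\polySmaller(1+\variable B')\cdots,
\end{equation*}
valid whenever $A$ and $B$ are counting polynomials of trees. My first attempt would be to write the difference of the two polynomials explicitly via the recursion and factor it as a product of terms of the form $(P-1)$ with $P\polyGreater 1$, exactly as in the star-merging computation above.
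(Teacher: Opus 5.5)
\textbf{Upper bound: fine, and the same route as the paper.} Your upper-bound plan is essentially the paper's:
\begin{enumerate}
\item sibling lifts turn every tree into a star;
\item move lifts turn every cycle-rooted tree into a concentrated cycle-rooted star;
\item every cycle is shortened to length $2$;
\item each $S_{2,a}$ is absorbed into a star;
\item the stars are consolidated.
\end{enumerate}
Your merging identity $x\bigl((1+x)^a-1\bigr)\bigl((1+x)^b-1\bigr)\succeq 0$ is a cleaner substitute for the paper's one-unit transfer. The absorption step you leave unchecked is a one-line computation:
\[
A_{S_{a+b+2}}-A_{S_{2,a}}A_{S_b}=x(1+x)^b\bigl((1+x)^a-1\bigr)+x^2(1+x)^a\bigl(2(1+x)^b-1\bigr)\succeq 0.
\]
The case $k=n$ is simply excluded in the formal theorem. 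Reading $\binom{-1}{q}$ as $0$ would be wrong at $q=0$ anyway, since the empty set is always successor-closed.

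\textbf{Lower bound: no move creates a cycle.} Every move you propose either merges two existing cycles or transfers a leaf into an \emph{already existing} cycle. The transfer is either cycle extension or moving a leaf $v$ of a vertex-rooted tree $T$ into a cycle. The latter move is indeed monotone. Let $\hat T$ be the tree without $v$, and let $\Pi\succeq 0$ be the product of the $A_{u_i}-1$ over the cycle vertices. The decrease is then $(A_T-A_{\hat T})+\Pi\,(A_T-xA_{\hat T})$. Both terms are $\succeq 0$, the second by the paper's leaf-removal lemma.

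None of these moves creates a cycle. Moreover, a vertex-rooted tree with an edge cannot be turned into isolated vertices without losing edges. So if $G$ is acyclic, say $G=P_k\sqcup(n-k-1)\,S_0$, no move applies and you never reach $C_k\sqcup(n-k)\,S_0$.

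You need a cycle-creating exchange. The paper gets it in three steps:
\begin{enumerate}
\item Replace each vertex-rooted tree on $d$ vertices by $P_{d-1}$. This is valid because every coefficient of $A_T$ up to $x^d$ is at least $1$.
\item Merge paths via $A_{P_a}A_{P_b}\succeq(1+x)A_{P_{a+b}}$ for $a,b\geq 1$.
\item For $p\geq 2$, use $A_{P_p}-(1+x)(1+x^p)=\sum_{i=2}^{p-1}x^i\succeq 0$, i.e.\ $P_p$ dominates $C_p\sqcup S_0$.
\end{enumerate}
Once one cycle exists, your own moves finish the job.

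\textbf{The missing hypothesis $k\geq 2$.} The cycle-creating step is exactly where $k\geq 2$ enters. Your proposal never imposes it, yet the lower bound is false without it. For $k=1$ there is no $1$-cycle (no self-loops), and the graph $v_1\to v_2$ has polynomial $1+x+x^2$, whereas the formula gives $\binom{1}{1}+\binom{1}{0}=2$ at $q=1$. For $k=0$ the formula gives $2\binom{n}{q}$, against the true count $\binom{n}{q}$. The formal version of the theorem accordingly assumes $k>1$ for the lower bound.
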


Somewhat unexpectedly, the upper bound yields a quantitative strengthening of the set-pairs inequality of Bollob\'as~\cite{Bollobas65}: encoding a violation of Bollob\'as's hypothesis as an edge on the index set $[\numberOfPairs]$ gives a digraph with a known number $\numberOfEdges$ of \emph{active vertices} --- vertices with outgoing degree at least one --- which easily reduces to a functional digraph with $\numberOfEdges$ edges, to which \zcref[S]{thm:informal-extremal} directly applies.

\begin{theorem}[Informal, see {\zcref[S]{thm:probabilistic-combinatorial-lemma}}]
	\label{thm:informal-probabilistic}
	Let $\setA_1, \dotsc, \setA_\numberOfPairs, \setB_1, \dotsc, \setB_\numberOfPairs$ be set-pairs satisfying $\card{\setA_i} + \card{\setB_i} \leq \setsSize$.
	Then, if $\numberOfPairs > \binom{\setsSize}{\setsSize/2}$, we have
	\begin{equation*}
		\Pr_{\indicesSet \leftarrow \binom{[\numberOfPairs]}{\indicesSetSize}}
		\left[ \exists i \in \indicesSet \quad \exists j \in [\numberOfPairs] \setminus \indicesSet \quad \setA_i \cap \setB_j \subseteq \setB_i\right]
		\geq 1 - \exp\!\left(-\frac{\indicesSetSize(\numberOfEdges+1)}{\numberOfPairs}\right) - \frac{\indicesSetSize}{\numberOfPairs}\enspace,
	\end{equation*}
	where $\numberOfEdges \defeq \numberOfPairs - \binom{\setsSize}{\setsSize/2}$.
\end{theorem}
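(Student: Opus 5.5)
The plan is to encode violations of Bollob\'as's hypothesis as a digraph on the index set $[\numberOfPairs]$ and then apply the upper bound of \zcref[S]{thm:informal-extremal}. Define a digraph $D$ on $[\numberOfPairs]$ with an edge $i \to j$ for $i \neq j$ whenever $\setA_i \cap \setB_j \subseteq \setB_i$. The event in the statement fails for a $\indicesSetSize$-set $\indicesSet$ exactly when no edge of $D$ leaves $\indicesSet$, that is, when $\indicesSet$ is successor-closed in $D$. So it suffices to show that
\begin{equation*}
\Pr_{\indicesSet}\left[\indicesSet \text{ successor-closed in } D\right] \leq \exp\!\left(-\frac{\indicesSetSize(\numberOfEdges+1)}{\numberOfPairs}\right) + \frac{\indicesSetSize}{\numberOfPairs}\enspace.
\end{equation*}

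The first step, which I expect to be the main obstacle, is to show that $D$ has at least $\numberOfEdges$ active vertices. Let $I \subseteq [\numberOfPairs]$ be the set of inactive vertices. For $i \in I$ and every $j \neq i$ we have $\setA_i \cap \setB_j \not\subseteq \setB_i$. I would replace each pair by a modified pair, for instance $(\setA_i \setminus \setB_i, \setB_i)$, so that the pairs indexed by $I$ satisfy the hypothesis of Bollob\'as's inequality with the size constraint $\card{\setA_i}+\card{\setB_i} \leq \setsSize$ preserved. I would then invoke the uniform (Lubell-type) form of the inequality, $\sum_{i} \binom{\card{\setA_i}+\card{\setB_i}}{\card{\setA_i}}^{-1} \leq 1$. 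Since each term is at least $\binom{\setsSize}{\setsSize/2}^{-1}$, this gives $\card{I} \leq \binom{\setsSize}{\setsSize/2}$. The delicate points are checking that the modification really yields the required disjointness and intersection conditions in the precise form used by the formal statement, and fixing the direction of the inclusion. If the direct reduction fails, I would instead rerun the standard random-permutation proof restricted to $I$.

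Next I reduce to a functional digraph. For each active vertex, keep exactly one outgoing edge. Then discard edges until exactly $\numberOfEdges$ remain; this is possible because there are at least $\numberOfEdges$ active vertices. The result is a functional digraph $\graphG$ on $\numberOfPairs$ vertices with $\numberOfEdges$ edges. Since $\graphG$ is a spanning subgraph of $D$, every set that is successor-closed in $D$ is also successor-closed in $\graphG$. By the upper bound of \zcref[S]{thm:informal-extremal}, the number of successor-closed $\indicesSetSize$-sets of $D$ is therefore at most $\binom{\numberOfPairs-\numberOfEdges-1}{\indicesSetSize}+\binom{\numberOfPairs-1}{\indicesSetSize-1}$.

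Finally, divide by $\binom{\numberOfPairs}{\indicesSetSize}$. The second term contributes exactly $\binom{\numberOfPairs-1}{\indicesSetSize-1}/\binom{\numberOfPairs}{\indicesSetSize} = \indicesSetSize/\numberOfPairs$. For the first term, write the ratio as a product of $\indicesSetSize$ factors $\frac{\numberOfPairs-\numberOfEdges-1-t}{\numberOfPairs-t}$ for $t = 0, \dotsc, \indicesSetSize-1$. Each factor is at most $1-\frac{\numberOfEdges+1}{\numberOfPairs}$, so the product is at most
\begin{equation*}
\left(1-\frac{\numberOfEdges+1}{\numberOfPairs}\right)^{\indicesSetSize} \leq \exp\!\left(-\frac{\indicesSetSize(\numberOfEdges+1)}{\numberOfPairs}\right)\enspace.
\end{equation*}
Taking the complement gives the claimed bound.
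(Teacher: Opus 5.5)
Your proposal is correct and follows essentially the paper's route. You encode violations as a digraph on $[\numberOfPairs]$, use Bollob\'as's inequality on the inactive indices to get at least $\numberOfEdges$ active vertices, prune to a functional digraph with $\numberOfEdges$ edges, apply the extremal upper bound, and divide by $\binom{\numberOfPairs}{\indicesSetSize}$. The differences are cosmetic. Your replacement of $\setA_i$ by $\setA_i\setminus\setB_i$ works, since $\setA_i\cap\setB_j\subseteq\setB_i$ iff $(\setA_i\setminus\setB_i)\cap\setB_j=\varnothing$, so the digraph is unchanged; it covers the disjointness hypothesis that the informal statement omits and the formal version simply assumes. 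You also bound the ratio as $\indicesSetSize$ factors, each at most $1-(\numberOfEdges+1)/\numberOfPairs$ (when some factor is negative the ratio is just $0$), rather than the paper's $\numberOfEdges+1$ factors $1-\indicesSetSize/(\numberOfPairs-i)$, and this reaches the same exponential bound.
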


\subsection{Related work}

Bollob\'as's set-pairs inequality~\cite{Bollobas65}, also known as the two families theorem, has been generalized along several axes since its original publication.
Frankl~\cite{Frankl83} and, independently, Kalai~\cite{Kalai84} observed that Lov\'asz's linear-algebraic proof of the uniform case extends to a skew (asymmetric) hypothesis; the skew bound remained restricted to the uniform setting until it was extended to the fully non-uniform (weighted) case by Heged\H{u}s~\cite{Hegedus23} and by Heged\H{u}s and Frankl~\cite{HegedusF24}.
In another direction, Talbot~\cite{Talbot04} extended the theorem to $t$-intersecting families, and O'Neill and Verstra\"ete~\cite{ONeill21} generalized the inequality to $k$-wise intersection conditions among $k$ families, connecting it to a hypergraph covering problem.
Our contribution is orthogonal to these structural generalizations: rather than weakening or reshaping the intersection hypothesis, we strengthen the conclusion from an existence statement to a \emph{probabilistic guarantee} for a fixed hypothesis.
In \zcref[S]{sec:probabilistic-bollobas}, we show this strengthening applies equally to the skew variant of Heged\H{u}s and Frankl as to the standard one.

\subsection{Organization}

\zcref[S]{sec:preliminaries} recalls the necessary background on functional digraphs and the coefficient-wise order on generating functions.
\zcref[S]{sec:counting-polynomial} establishes the recursive formula for the successor-closed counting polynomial and the closed formula for elementary graphs.
\zcref[S]{sec:stars-and-concentrated-cycle-rooted-stars} reduces an arbitrary functional digraph to a canonical form built from these elementary graphs, while preserving the coefficient-wise order.
\zcref[S]{sec:extremal-digraph} refines this reduction under a fixed edge and vertex count, proving \zcref[S]{thm:informal-extremal}.
\zcref[S]{sec:probabilistic-bollobas} proves the probabilistic extension of set-pairs inequality (\zcref[S]{thm:informal-probabilistic}), and also derives the skew analog as a corollary.

\section{Preliminaries}
\label{sec:preliminaries}

We denote vectors with bold letters, and for a vector $\treeSizes{}$, we let $\card*{\treeSizes{}}$ denote its length. 
Given any integer $n$, we write $[n] \defeq \set*{1, \dotsc, n}$ and symbolize by $\treeSizes{} \partitionsOf n$ the fact that $\treeSizes{}$ is an integer partition of $n$.
In the following, we denote by $\binom{S}{n}$ the set of subsets of $S$ of size $n$. 
Moreover, given a set $S$ and an integer $n \leq \card*{S}$, we use $\indicesSet \leftarrow \binom{S}{n}$ for the random variable $\indicesSet$ uniformly distributed in $\binom{S}{n}$.
Given two polynomials $\generatingFunction{}{}$ and $\generatingFunctionB{}{}$, we write $\generatingFunction{} {}\positiveProportional \generatingFunctionB{}{}$ if there exists another polynomial $\generatingFunctionC{}{}\polyGreater 0$ such that $\generatingFunction{}{} = \generatingFunctionC{}{} \cdot \generatingFunctionB{}{}$.
Note that if $\generatingFunctionB{}{} \polyGreater 0$, then $\generatingFunction{}{} \polyGreater 0$, since $\generatingFunction{}{}$ is then a product of two polynomials with nonnegative coefficients.
Given a function $f$ and a substitution of one or more of its arguments, we write $f\big|_{x \mapsto v}$ for $f$ evaluated with $x$ replaced by $v$, the remaining arguments unchanged.

\subsection{Graph theory}

Throughout this paper, we only consider directed graphs, or \emph{digraphs} for short, without self-loops.
Given a digraph $\graphG \defeq \pars*{\vertices, \edges}$, we denote by $\graphG(\vertexU)$ the set of vertices that can be reached in one step from $\vertexU$. 
For two  graphs $\graphG$ and $\graphH$, we denote by $\graphG \disjointUnion \graphH$ their \emph{disjoint union}.
Finally, we say that a vertex is active if it has a non-zero outgoing degree.

\begin{definition}
	We say that a digraph $\graphG$ is \defemph{functional} if the outgoing degree of its vertices is at most one.
\end{definition}
\noindent The term functional refers to the fact that we can view a functional graph $\graphG$ as a function from its active vertices to $\vertices$.
When $\graphG$ is functional, instead of defining $\graphG\pars*{\cdot}$ to return a singleton containing a vertex, we define it to return the vertex directly.

We continue by introducing some special graphs.
\begin{definition}
	A \defemph{$\cycleLength{}$-cycle}, or $\cycle{\cycleLength{}}$ in symbol, is a digraph with $\cycleLength{}$ vertices $\vertexU_1, \dotsc, \vertexU_{\cycleLength{}}$ and the structure $\connect{\vertexU_1}{\vertexU_2}, \dotsc, \connect{\vertexU_{\cycleLength{} - 1}}{\vertexU_{\cycleLength{}}}, \connect{\vertexU_{\cycleLength{}}}{\vertexU_1}$.
	The integer $\cycleLength{}$ is called the \defemph{cycle length}.
\end{definition}
\begin{definition}
	A \defemph{vertex-rooted tree} is an acyclic digraph with a distinguished vertex called its \defemph{root}.
	Edges are flowing from the leaf vertices to the root vertex.
\end{definition}
\begin{definition}
	We say that a vertex-rooted tree is a \defemph{vertex-rooted star} when it has a depth of at most one.
	More precisely, we denote by $\starGraph{\treeInternalSize{}}$ the star which has $\treeInternalSize{}$ leaves. 
\end{definition}
\begin{definition}
	A \defemph{vertex-rooted path} is a vertex-rooted tree where the tree is degenerate.
	That is, the tree is a path containing a finite number of vertices $\treeSize{}$.
	The length of the path, denoted by $\pathLength$, is the number of edges, that is, $\pathLength = \treeSize{} - 1$.
	We use the notation $\pathGraph{\pathLength}$ for a vertex-rooted path of length $\pathLength$.
\end{definition}

Instead of a vertex root, we now consider graphs whose root is a cycle and each cycle element is a vertex-rooted elementary graph.
\begin{definition}
	A \defemph{directed cycle-rooted tree}, or \defemph{cycle-rooted tree} for short, is a cycle where the vertices of the cycle are each a root of their own vertex-rooted trees.
	Similarly, a \defemph{cycle-rooted star} is a cycle-rooted tree whose vertex-rooted trees have depth one.
	In symbols, we will use $\cycleRootedStar{\treeInternalSizes{}}$ for the cycle-rooted star whose cycle has length $\cycleLength{} \defeq \card*{\treeInternalSizes{}}$, where the $i$-th cycle vertex is the root of a tree with $\treeInternalSize{i}$ leaves.
	Finally, a \defemph{cycle-rooted path} is a cycle-rooted tree whose vertex-rooted trees are all paths.
	We denote by $\cycleRootedPath{\pathLengths}$ the cycle-rooted path whose cycle has length $\card*{\pathLengths}$ and the $i$-th path has length $\pathLength_i$.
\end{definition}
\noindent A comparison between a vertex-rooted and cycle-rooted star is depicted in \zcref[S]{fig:tree-cycle-rooted-tree}.
\begin{figure}[t]
	\centering
	\begin{subfigure}[b]{0.42\textwidth}
		\centering
		\begin{tikzpicture}[>=stealth]
			\vertexrootedtree
		\end{tikzpicture}
		\caption{Vertex-rooted tree}
	\end{subfigure}
	\hfill
	\begin{subfigure}[b]{0.52\textwidth}
		\centering
		\begin{tikzpicture}[>=stealth]
			\cyclerootedtree
		\end{tikzpicture}
		\caption{Cycle-rooted tree}
	\end{subfigure}
	\caption{The two functional digraphs presented: (a) a vertex-rooted tree, and (b) a cycle-rooted tree. The highlighted vertices mark the vertex-root (a) and the cycle-root (b).}
	\label{fig:tree-cycle-rooted-tree}
\end{figure}

\begin{remark}
	The graph $\starGraph{0}$ consists of a single vertex.
	Throughout this paper, we identify isolated vertices with $\starGraph{0}$.
\end{remark}

When all the weight of a cycle-rooted graph is concentrated in a single slot, we say that the graph is \emph{concentrated}.
\begin{definition}
	A \defemph{concentrated cycle-rooted tree} is a cycle-rooted tree where only one of the cycle vertices is not a bare vertex.
	Similarly, a \defemph{concentrated cycle-rooted star} $\cycleRootedStar{\treeInternalSizes{}}$ is a cycle-rooted star where, $\treeInternalSizes{} = \pars*{\treeInternalSize{}, 0, \dotsc, 0}$ and a \defemph{concentrated cycle-rooted path} $\cycleRootedPath{\pathLengths}$ is a cycle-rooted path where, $\pathLengths = \pars*{\pathLength, 0, \dotsc, 0}$.
	We use the notations $\concentratedCycleRootedStar{\cycleLength{}}{\treeInternalSize{}} \defeq \cycleRootedStar{\treeInternalSizes{}}$ for the concentrated cycle-rooted star with a cycle of length $\cycleLength{}$ and $\concentratedCycleRootedPath{\cycleLength{}}{\pathLength} \defeq \cycleRootedPath{\pathLengths}$ for the cycle-rooted path with cycle length $\cycleLength{}$.
\end{definition}

Finally, we define the \emph{reverse} operator for graphs.
\begin{definition}
	Given any graph $\graphG$, we denote by $\rev{\graphG}$ the graph in which all the edges are reversed.
	We call this graph the \defemph{reversed $\graphG$}.
\end{definition}

We continue with a characterization result of functional digraphs.
\begin{proposition}[Characterization of functional digraph]\label{pro:functional-graph-as-cycle-rooted-forest}
	Every functional digraph $\graphG$ is a disjoint union of cycle-rooted and vertex-rooted trees.
	In other words, it can be decomposed into
	\begin{equation*}
		\graphG = \functionalGraphDecomposition\enspace,
	\end{equation*}
	where the $\vertexRootedtree{i}$'s and the $\cycleRootedtree{i}$'s are, respectively, vertex-rooted trees and cycle-rooted trees.
	We will denote by $\vertexRootedTreeSize{i}$ and $\cycleRootedTreeSize{i}$ their respective sizes.
	Moreover, we also denote by $\cycleLength{i}$ the cycle length of $\cycleRootedtree{i}$.
\end{proposition}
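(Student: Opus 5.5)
We decompose $\graphG$ into its weakly connected components and show that each component is either a vertex-rooted tree or a cycle-rooted tree. Since no edge joins two distinct components, $\graphG$ is their disjoint union, which gives the stated decomposition. Fix a component $H$ with vertex set $W$. Because out-degrees are at most one, $H$ has at most $\card*{W}$ edges. Since $H$ is weakly connected, it has at least $\card*{W}-1$ edges. We therefore split into two cases according to whether $H$ contains an inactive vertex.

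\emph{Case 1: $H$ has an inactive vertex $r$.} From any vertex $\vertexU \in W$, follow the unique out-edges $\vertexU, \graphG(\vertexU), \graphG(\graphG(\vertexU)), \dotsc$. We first claim that $H$ contains no directed cycle. Suppose it contained one, say $C$. A directed cycle has no out-edges leaving it, since each cycle vertex already uses its unique out-edge inside $C$. Take a shortest undirected path in $H$ from $C$ to $r$, and let $x \notin C$ be its first vertex after leaving $C$, so that $x$ is adjacent to some vertex $c \in C$. The edge between $x$ and $c$ cannot be an out-edge of $c$, so it must be $\connect{x}{c}$. Continuing along the path, each step away from $C$ must again be an edge pointing back toward $C$, because each vertex's only out-edge is already used toward $C$. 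The last vertex reached is $r$, so $r$ would have an out-edge, contradicting that $r$ is inactive. This out-degree propagation along the path is the only delicate point, and I would handle it by induction on the distance from $C$.

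With no cycles, $H$ is acyclic, so every forward walk terminates, and it can only terminate at an inactive vertex. Any two inactive vertices would give $\card*{W}-2$ edges, too few for connectivity. Hence $r$ is the unique inactive vertex, every walk ends at $r$, and $H$ is a vertex-rooted tree with root $r$.

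\emph{Case 2: every vertex of $H$ is active.} Then $H$ has exactly $\card*{W}$ edges. By finiteness, every forward walk eventually repeats a vertex and so enters a directed cycle. If there were two distinct cycles, they would be vertex-disjoint, since each vertex has a unique successor. Connectivity would then yield a path between them, and the same out-edge propagation argument as in Case 1 gives a contradiction: some vertex would need two out-edges. Hence $H$ has a unique cycle $C$.

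Removing the edges of $C$ leaves, around each cycle vertex $\vertexU_i$, the set of vertices whose forward walk first hits $C$ at $\vertexU_i$. Each such set is acyclic with unique sink $\vertexU_i$, so by Case 1 it forms a vertex-rooted tree rooted at $\vertexU_i$. Thus $H$ is a cycle-rooted tree, and its cycle length $\cycleLength{i}$ is well defined.

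The sizes $\vertexRootedTreeSize{i}$ and $\cycleRootedTreeSize{i}$ are then simply notation for the component sizes.
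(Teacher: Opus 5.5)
Your proof is correct. Its core mechanism is the same as the paper's: forward walks along the unique out-edges either stop at a sink or fall into a cycle. The paper's one-line proof stops there and simply asserts that this splits $G$ into vertex-rooted and cycle-rooted components. It never explains why a single weakly connected component cannot contain two sinks, two cycles, or a sink together with a cycle. Your bound $|W|-1 \le |E(H)| \le |W|$ and the out-degree propagation along a shortest undirected path fill exactly that gap, so your argument is the more complete of the two.

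Two small remarks. First, when you invoke Case 1 for the vertex set hanging off each cycle vertex $u_i$, you should state that this set is weakly connected. It is, because every forward walk starting in it reaches $u_i$ without leaving it. Case 1 was stated for weakly connected components, so this is needed to apply it.

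Second, the propagation argument is dispensable, because the edge count alone does the work. With an inactive vertex, the underlying multigraph is connected with exactly $|W|-1$ edges, hence a tree. A tree has no directed cycle (a $2$-cycle would be a double edge), and its unique inactive vertex is the sink. With all vertices active, the underlying multigraph is connected with $|W|$ edges, hence contains exactly one undirected cycle. It therefore cannot contain two vertex-disjoint directed cycles.
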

\begin{proof}
	Since each vertex has out-degree at most one, following out-edges from any vertex either reaches a fixed point-free cycle or terminates at a sink, splitting $\graphG$ into connected components that are each either cycle-rooted or vertex-rooted.
\end{proof}

\subsection{Generating functions and successor-closed subsets}

We define the following family:
\begin{definition}
	The set
	\begin{equation*}
		\successorClosedSubsets{\graphG} \defeq \set*{\indicesSet \subseteq \vertices \mid \forall \vertexU \in \indicesSet,\ \graphG(\vertexU) \subseteq \indicesSet}
	\end{equation*}
	is the \defemph{family of successor-closed subsets of $\graphG$}.
	We say that a subset $\indicesSet$ of vertices is \defemph{successor-closed} if and only if it belongs to $\successorClosedSubsets{\graphG}$.
\end{definition}

Throughout this paper, we will only consider \emph{univariate polynomials}.

\begin{notation}
	Given a polynomial $\generatingFunction{}{}$, we denote by $\coeff*{\variable^\degree} \generatingFunction{}{\variable}$ the coefficient of the monomial $\variable^\degree$ of $\generatingFunction{}{}$.
\end{notation}

We define the following order for polynomials.

\begin{definition}
	Given two polynomials $\generatingFunction{}{}$ and $\generatingFunctionB{}{}$, the \defemph{coefficient-wise order} is defined as 
	\begin{equation*}
		\generatingFunction{}{} \polySmaller \generatingFunctionB{}{} \Leftrightarrow \forall \degree \in \N \quad \coeff*{\variable^\degree}\generatingFunction{}{} \leq \coeff*{\variable^\degree}\generatingFunctionB{}{}\enspace.
	\end{equation*}
	We will also write $\generatingFunctionB{}{} \polyGreater \generatingFunction{}{}$ when $\generatingFunction{}{} \polySmaller \generatingFunctionB{}{}$.
\end{definition}
\noindent The next result follows from basic algebra.
\begin{proposition}
	The relation $\polySmaller$ is a partial order over $\R[\variable]$.
\end{proposition}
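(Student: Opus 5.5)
We verify the three partial-order axioms directly from the definition of $\polySmaller$, which compares polynomials coefficient by coefficient. The guiding observation is that a polynomial in $\R[\variable]$ is determined by its coefficient sequence $(\coeff*{\variable^\degree}\generatingFunction{}{})_{\degree \in \N}$, which has finite support. Under this identification, $\polySmaller$ is exactly the product order on $\R^{\N}$ induced by the usual order on $\R$, restricted to finitely supported sequences. Each axiom therefore reduces to the corresponding property of $\leq$ on $\R$, applied at each index $\degree$.

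\emph{Reflexivity.} For any polynomial $\generatingFunction{}{}$ and every $\degree \in \N$, we have $\coeff*{\variable^\degree}\generatingFunction{}{} \leq \coeff*{\variable^\degree}\generatingFunction{}{}$. Hence $\generatingFunction{}{} \polySmaller \generatingFunction{}{}$.

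\emph{Antisymmetry.} Suppose $\generatingFunction{}{} \polySmaller \generatingFunctionB{}{}$ and $\generatingFunctionB{}{} \polySmaller \generatingFunction{}{}$. Then for every $\degree$ we have both $\coeff*{\variable^\degree}\generatingFunction{}{} \leq \coeff*{\variable^\degree}\generatingFunctionB{}{}$ and the reverse inequality. Antisymmetry of $\leq$ on $\R$ gives equality of all coefficients, and hence $\generatingFunction{}{} = \generatingFunctionB{}{}$. This is the only step that uses something beyond pointwise reasoning: two polynomials with identical coefficient sequences are equal as elements of $\R[\variable]$. That holds by definition of the polynomial ring, since we work with formal polynomials and not with functions.

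\emph{Transitivity.} Suppose $\generatingFunction{}{} \polySmaller \generatingFunctionB{}{}$ and $\generatingFunctionB{}{} \polySmaller \generatingFunctionC{}{}$. For each $\degree$ we chain $\coeff*{\variable^\degree}\generatingFunction{}{} \leq \coeff*{\variable^\degree}\generatingFunctionB{}{} \leq \coeff*{\variable^\degree}\generatingFunctionC{}{}$, so $\generatingFunction{}{} \polySmaller \generatingFunctionC{}{}$.

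None of these steps poses a real obstacle. The only point needing care is the identification of a polynomial with its coefficient sequence in the antisymmetry step. It is also worth noting that the order is only partial, not total; for example, $\variable$ and $1$ are incomparable. The proposition, however, claims only a partial order.
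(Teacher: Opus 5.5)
Your proof is correct and is the routine coefficient-wise check of reflexivity, antisymmetry and transitivity. The paper gives no written proof and only says the result ``follows from basic algebra,'' so your argument supplies exactly the details the paper leaves implicit, including the point that antisymmetry uses the fact that a formal polynomial is determined by its coefficients.
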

\noindent We will implicitly use the transitivity property of this relation throughout this paper.

We capture our counting problem in the following generating function.
\begin{definition}
	The polynomial
	\begin{equation*}
		\generatingFunction{\graphG}{\variable} \defeq \sum_{\indicesSet \in \successorClosedSubsets{\graphG}} \variable^{\card*{\indicesSet}}
	\end{equation*}
	is the \defemph{successor-closed counting polynomial} of the digraph $\graphG$.
\end{definition}
\noindent The coefficients of this polynomial count how many successor-closed subsets of a particular cardinality exist.
With our notations,
\begin{equation*}
	\coeff*{\variable^\indicesSetSize} \generatingFunction{\graphG}{\variable} = \card*{\set*{\indicesSet \in \successorClosedSubsets{\graphG} \mid \card*{\indicesSet} = \indicesSetSize}}\enspace.
\end{equation*}

\begin{remark}
	\label{rmk:star-and-concentrated-cycle-rooted-star-triviality}
	Note that $\starGraph{0}$ is a single vertex.
	Furthermore, as a self-loop does not affect the family of successor-closed subsets, we identify any cycle-rooted graph with cycle length $1$ as its vertex-rooted version.
	In particular, $\generatingFunction{\concentratedCycleRootedStar{1}{\treeInternalSize{}}}{} = \generatingFunction{\starGraph{\treeInternalSize{}}}{}$ and $\generatingFunction{\concentratedCycleRootedPath{1}{\pathLength{}}}{} = \generatingFunction{\pathGraph{\pathLength{}}}{}$.
\end{remark}

Given the disjoint union of two graphs, its successor-closed subsets are the product of the successor-closed subsets of the two graphs.
\begin{lemma}[Product rule]
	\label{lma:product-rule}
	Let $\graphG$ and $\graphH$ be two digraphs, then
	\begin{equation*}
		\generatingFunction{\graphG \disjointUnion \graphH}{\variable} = \generatingFunction{\graphG}{\variable} \cdot \generatingFunction{\graphH}{\variable}\enspace.
	\end{equation*}
\end{lemma}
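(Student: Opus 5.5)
The plan is to establish a bijection at the level of successor-closed families and then read off the polynomial identity by comparing coefficients. Write $\graphG = (\vertices_\graphG, \edges_\graphG)$ and $\graphH = (\vertices_\graphH, \edges_\graphH)$. In the disjoint union $\graphG \disjointUnion \graphH$ the vertex set is $\vertices_\graphG \sqcup \vertices_\graphH$, and no edge joins the two parts. Consequently, for every $\vertexU \in \vertices_\graphG$ we have $(\graphG \disjointUnion \graphH)(\vertexU) = \graphG(\vertexU) \subseteq \vertices_\graphG$, and symmetrically for vertices of $\graphH$.

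The central step is to show that the map $\indicesSet \mapsto (\indicesSet \cap \vertices_\graphG, \indicesSet \cap \vertices_\graphH)$ is a bijection from $\successorClosedSubsets{\graphG \disjointUnion \graphH}$ onto $\successorClosedSubsets{\graphG} \times \successorClosedSubsets{\graphH}$. Its inverse is $(\indicesSet_1, \indicesSet_2) \mapsto \indicesSet_1 \cup \indicesSet_2$.

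First, we check that the map lands in the product. Take $\indicesSet$ successor-closed and $\vertexU \in \indicesSet \cap \vertices_\graphG$. Then $\graphG(\vertexU) \subseteq \indicesSet$, and also $\graphG(\vertexU) \subseteq \vertices_\graphG$, so $\graphG(\vertexU) \subseteq \indicesSet \cap \vertices_\graphG$. The same argument applies to $\graphH$.

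Second, we check that the inverse map lands in $\successorClosedSubsets{\graphG \disjointUnion \graphH}$. Take $\indicesSet_1, \indicesSet_2$ successor-closed. Any $\vertexU \in \indicesSet_1 \cup \indicesSet_2$ lies in exactly one part. If $\vertexU$ lies in $\vertices_\graphG$, then its out-neighbours lie in $\indicesSet_1$; if it lies in $\vertices_\graphH$, they lie in $\indicesSet_2$. In both cases they lie in the union.

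The two maps are mutually inverse because $\vertices_\graphG$ and $\vertices_\graphH$ partition the vertex set.

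Finally, since the parts are disjoint, $\card*{\indicesSet} = \card*{\indicesSet \cap \vertices_\graphG} + \card*{\indicesSet \cap \vertices_\graphH}$. Hence
\begin{equation*}
	\generatingFunction{\graphG \disjointUnion \graphH}{\variable} = \sum_{(\indicesSet_1, \indicesSet_2)} \variable^{\card*{\indicesSet_1}} \variable^{\card*{\indicesSet_2}},
\end{equation*}
where the sum runs over $\successorClosedSubsets{\graphG} \times \successorClosedSubsets{\graphH}$. This double sum factors as $\generatingFunction{\graphG}{\variable} \cdot \generatingFunction{\graphH}{\variable}$.

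There is no real obstacle here. The only point needing care is the observation that $\graphG(\vertexU)$ is computed entirely inside the component containing $\vertexU$, which is exactly what makes both directions of the bijection work.
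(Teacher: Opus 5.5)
Your proof is correct and takes the same approach as the paper: successor-closed subsets of $\graphG \disjointUnion \graphH$ are chosen independently in $\graphG$ and in $\graphH$. The paper only states this in one line, whereas you make it precise through the explicit bijection $\indicesSet \mapsto (\indicesSet \cap V_G, \indicesSet \cap V_H)$ and the additivity of cardinalities.
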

\begin{proof}
	The successor-closed subsets can be chosen independently in $\graphG$ and $\graphH$.
	Therefore, the successor-closed counting polynomial is simply the product of everything.
\end{proof}

A common technique throughout this paper is to find a local transformation of the graph $\graphG$ that either increases or decreases the successor-closed counting polynomial while making the graph easier to analyze.
A very simple example is edge deletion.
\begin{lemma}[Monotonicity under edge deletion]
	\label{lma:monotonicity-under-edge-deletion}
	Let $\graphG=\pars*{\vertices,\edges}$ be a digraph and let $\graphH= \pars*{\vertices,\edges\setminus{(\vertexU,\vertexV)}}$ be obtained by removing a single edge. Then
	\begin{equation*}
		\generatingFunction{\graphG}{\variable}
		\polySmaller
		\generatingFunction{\graphH}{\variable}\enspace.
	\end{equation*}
\end{lemma}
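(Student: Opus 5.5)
The plan is to show that the family of successor-closed subsets can only grow when an edge is deleted, that is, $\successorClosedSubsets{\graphG} \subseteq \successorClosedSubsets{\graphH}$. The coefficient-wise inequality then follows by counting subsets of each fixed size.

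First, fix $\indicesSet \in \successorClosedSubsets{\graphG}$ and a vertex $\vertexW \in \indicesSet$. Since $\graphH$ has the same vertex set as $\graphG$ and its edge set is a subset of $\edges$, the out-neighbourhoods satisfy $\graphH(\vertexW) \subseteq \graphG(\vertexW)$. For $\vertexW \neq \vertexU$ they are in fact equal, and for $\vertexW = \vertexU$ we have $\graphH(\vertexU) = \graphG(\vertexU) \setminus \set*{\vertexV}$. Because $\graphG(\vertexW) \subseteq \indicesSet$, we get $\graphH(\vertexW) \subseteq \indicesSet$. This holds for every $\vertexW \in \indicesSet$, so $\indicesSet \in \successorClosedSubsets{\graphH}$.

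Second, for each $\degree \in \N$, restricting the inclusion to subsets of size $\degree$ gives
\begin{equation*}
	\coeff*{\variable^\degree}\generatingFunction{\graphG}{\variable}
	= \card*{\set*{\indicesSet \in \successorClosedSubsets{\graphG} \mid \card*{\indicesSet} = \degree}}
	\leq \card*{\set*{\indicesSet \in \successorClosedSubsets{\graphH} \mid \card*{\indicesSet} = \degree}}
	= \coeff*{\variable^\degree}\generatingFunction{\graphH}{\variable}\enspace,
\end{equation*}
which is exactly the definition of $\generatingFunction{\graphG}{} \polySmaller \generatingFunction{\graphH}{}$.

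No step is a real obstacle; this is essentially an unwinding of the definitions. The only point needing care is the convention of $\graphG(\cdot)$ returning a single vertex in the functional case. To sidestep it, the argument should use the general set-valued definition of $\graphG(\cdot)$, which covers any digraph and so applies here directly.
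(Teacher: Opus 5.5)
Your proof is correct and follows the paper's argument: the paper also notes that every successor-closed subset of $G$ is successor-closed in $H$ because every edge of $H$ is an edge of $G$, so $\mathcal{U}(G) \subseteq \mathcal{U}(H)$. The coefficient-wise inequality then follows by counting subsets of each size, and your choice of the set-valued $G(\cdot)$ is the right one, since the lemma concerns arbitrary digraphs.
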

\begin{proof}
	Every successor-closed subset of $\graphG$ is also successor-closed in
	$\graphH$, since every edge of $\graphH$ is also an edge of $\graphG$.
\end{proof}

\subsection{Bollobás set-pair inequalities}

In \cite{Bollobas65}, \citeauthor{Bollobas65} introduces a fundamental result of extremal set theory.
We state the theorem formally below.

\begin{lemma}[Combinatorial lemma~{\cite{Bollobas65}}]
	\label{lma:combinatorial-lemma}
	Let $\setA_1, \dotsc, \setA_\numberOfPairs, \setB_1, \dotsc, \setB_\numberOfPairs$ be some subsets of some universe
	satisfying $\setA_i \cap \setB_i = \varnothing$ and $\card{\setA_i} + \card{\setB_i} \leq \setsSize$ for every $i$.
	Then, if for all $i \neq j$ we have $\setA_i \cap \setB_j \not\subseteq \setB_i$, it must be the case that
	$\numberOfPairs \leq \binom{\setsSize}{\setsSize/2}$.
\end{lemma}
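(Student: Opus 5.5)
The plan is to give the classical permutation (Lubell-type) argument, adapted to the hypothesis $\setA_i \cap \setB_j \not\subseteq \setB_i$. First I will normalize. Since $\setA_i \cap \setB_i = \varnothing$, the condition $\setA_i \cap \setB_j \subseteq \setB_i$ holds exactly when $\setA_i \cap \setB_j = \varnothing$. So the hypothesis is the standard cross-intersecting condition: $\setA_i \cap \setB_j \neq \varnothing$ for all $i \neq j$. We may take the universe $X$ to be the finite union of all the sets, with $N \defeq \card{X}$. I write $a_i \defeq \card{\setA_i}$ and $b_i \defeq \card{\setB_i}$.

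Next, the main step proves the weighted inequality
\begin{equation*}
	\sum_{i=1}^{\numberOfPairs} \binom{a_i + b_i}{a_i}^{-1} \leq 1\enspace.
\end{equation*}
Choose a uniformly random linear order $\sigma$ of $X$. Let $E_i$ be the event that every element of $\setA_i$ precedes every element of $\setB_i$. Only the relative order of the $a_i + b_i$ elements of $\setA_i \cup \setB_i$ matters, so $\Pr[E_i] = \binom{a_i+b_i}{a_i}^{-1}$.

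I then show the events $E_i$ are pairwise disjoint. Suppose $E_i$ and $E_j$ both hold with $i \neq j$. Say the last element of $\setA_i$ comes no later than the last element of $\setA_j$ in $\sigma$. Every element of $\setB_j$ lies after all of $\setA_j$, hence after all of $\setA_i$, and this contradicts $\setA_i \cap \setB_j \neq \varnothing$. The symmetric case uses $\setA_j \cap \setB_i \neq \varnothing$. Summing the probabilities of disjoint events gives the inequality. I expect this to be the only delicate point, because one needs the two-sided hypothesis to cover both orderings of the "last $\setA$-elements". Degenerate cases with empty $\setA_i$ or $\setB_i$ are also handled by this argument: for instance, if $\setA_i = \varnothing$ then the hypothesis fails unless $\numberOfPairs = 1$.

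Finally, since $a_i + b_i \leq \setsSize$, we have
\begin{equation*}
	\binom{a_i+b_i}{a_i} \leq \binom{a_i+b_i}{\lfloor (a_i+b_i)/2\rfloor} \leq \binom{\setsSize}{\lfloor \setsSize/2\rfloor}\enspace,
\end{equation*}
using unimodality of binomial coefficients and monotonicity of central binomial coefficients in the upper index. Each summand is therefore at least $\binom{\setsSize}{\setsSize/2}^{-1}$, and so $\numberOfPairs \leq \binom{\setsSize}{\setsSize/2}$, reading $\setsSize/2$ as $\lfloor \setsSize/2 \rfloor$ when $\setsSize$ is odd.
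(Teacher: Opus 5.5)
Your proof is correct and follows the paper's route: first reduce $A_i \cap B_j \not\subseteq B_i$ to $A_i \cap B_j \neq \varnothing$ using $A_i \cap B_i = \varnothing$, then use the weighted inequality $\sum_i \binom{a_i+b_i}{a_i}^{-1} \leq 1$, and finally bound each summand via $\binom{a_i+b_i}{a_i} \leq \binom{m}{\lfloor m/2 \rfloor}$. The one difference is that the paper cites the weighted inequality from Bollob\'as, whereas you prove it with the random-ordering (Lubell-type) argument and also handle the degenerate empty-set cases, so your version is self-contained.
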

\begin{proof}
	Since $\setA_i \cap \setB_i = \varnothing$ for every $i$, the condition $\setA_i \cap \setB_j \not\subseteq \setB_i$ is equivalent to $\setA_i \cap \setB_j \neq \varnothing$ for $i \neq j$ (if $\setA_i\cap\setB_j\subseteq\setB_i$, every element of $\setA_i\cap\setB_j$ would lie in both $\setA_i$ and $\setB_i$, contradicting their disjointness unless $\setA_i\cap\setB_j=\varnothing$, in which case the containment holds trivially).
	Bollob\'as's inequality~\cite{Bollobas65} then gives $\sum_{i=1}^{\numberOfPairs}\binom{\card{\setA_i}+\card{\setB_i}}{\card{\setA_i}}^{-1}\le1$. 
	Since $\binom{m}{k}\le\binom{m}{\lfloor m/2\rfloor}$, for every integer $k$, and $m\mapsto\binom{m}{\lfloor m/2\rfloor}$ is increasing, every summand is at least $\binom{\setsSize}{\setsSize/2}^{-1}$, so $\numberOfPairs \le \binom{\setsSize}{\setsSize/2}$.
\end{proof}

Recently, \citeauthor{HegedusF24} introduced a variant of this lemma.
Their \emph{skew} variant relaxes the hypothesis so that it only needs to be satisfied over indices $i < j$ rather than $i \neq j$.
\begin{lemma}[Skew combinatorial lemma~{\cite{HegedusF24}}]
	\label{lma:skew-combinatorial-lemma}
	Let $\setA_1, \dotsc, \setA_\numberOfPairs, \setB_1, \dotsc, \setB_\numberOfPairs$ be some subsets of the universe $[\setsSize]$, for some $\setsSize \in \N$, which satisfy $\setA_i \cap \setB_i = \varnothing$ for every $i$.
	Then, if for all $i < j$ we have $\setA_i \cap \setB_j \not\subseteq \setB_i$, it must be the case that $\numberOfPairs \leq \pars*{\setsSize+1}\binom{\setsSize}{\setsSize/2}$.
\end{lemma}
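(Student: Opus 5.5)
The plan is to reduce the statement to the classical \emph{uniform} skew version of the set-pairs inequality due to Frankl~\cite{Frankl83} and Kalai~\cite{Kalai84}, and then sum over size classes. The uniform skew version says: if $\card{\setA_i} = a$ and $\card{\setB_i} = b$ for all $i$, $\setA_i \cap \setB_i = \varnothing$, and $\setA_i \cap \setB_j \neq \varnothing$ for all $i < j$, then $\numberOfPairs \le \binom{a+b}{a}$. As in the proof of \zcref[S]{lma:combinatorial-lemma}, the disjointness $\setA_i \cap \setB_i = \varnothing$ makes the hypothesis $\setA_i \cap \setB_j \not\subseteq \setB_i$ equivalent to $\setA_i \cap \setB_j \neq \varnothing$. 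So we may work with the plain skew intersection condition throughout.

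\emph{Step 1 (split by $\card{\setA_i}$).} Since all sets live in $[\setsSize]$ and $\setA_i \cap \setB_i = \varnothing$, we have $\card{\setA_i} + \card{\setB_i} \le \setsSize$. Partition $[\numberOfPairs]$ into the $\setsSize+1$ classes $I_a \defeq \set*{i \mid \card{\setA_i} = a}$ for $a = 0, \dotsc, \setsSize$. The restriction of the family to $I_a$, with the inherited order on indices, still satisfies the skew hypothesis.

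\emph{Step 2 (pad the $\setB$'s).} Fix $a$ and a fresh set $Y$ of $\setsSize$ elements disjoint from $[\setsSize]$. For each $i \in I_a$, replace $\setB_i$ by $\setB_i' \defeq \setB_i \cup Y_i$, where $Y_i \subseteq Y$ is any subset of size $\setsSize - a - \card{\setB_i} \ge 0$. This preserves the hypotheses:
\begin{itemize}
\item $\setA_i \cap \setB_i' = \varnothing$, because $\setA_i \subseteq [\setsSize]$ is disjoint from $Y$;
\item $\setA_i \cap \setB_j' \supseteq \setA_i \cap \setB_j \neq \varnothing$ for $i<j$, because enlarging $\setB_j$ can only help.
\end{itemize}
Now every pair in class $I_a$ has sizes exactly $(a, \setsSize - a)$. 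The uniform skew bound therefore gives $\card{I_a} \le \binom{\setsSize}{a} \le \binom{\setsSize}{\setsSize/2}$. Summing over the $\setsSize+1$ classes yields $\numberOfPairs \le (\setsSize+1)\binom{\setsSize}{\setsSize/2}$.

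\emph{Step 3 (the uniform skew bound).} This is the main substantive ingredient, and I would prove it via Lov\'asz's exterior-algebra argument. Assign to the elements of the universe vectors in general position in $\R^{a+b}$. Let $\alpha_i \in \bigwedge^a \R^{a+b}$ be the wedge of the vectors of $\setA_i$, and $\beta_j \in \bigwedge^b \R^{a+b}$ the wedge of the vectors of $\setB_j$.
\begin{itemize}
\item By general position, $\alpha_i \wedge \beta_i \neq 0$, since $\setA_i \cup \setB_i$ consists of $a+b$ distinct elements.
\item For $i<j$, $\alpha_i \wedge \beta_j = 0$, since a shared element gives a repeated vector.
\end{itemize}
The matrix $(\alpha_i \wedge \beta_j)_{i,j}$ is thus triangular with nonzero diagonal. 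This forces the $\alpha_i$ to be linearly independent in a space of dimension $\binom{a+b}{a}$.

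The delicate point is Step 3. The general-position choice must make every $(a+b)$-subset independent, which the moment curve provides. The triangularity also only uses $i<j$, which is exactly the skew hypothesis. Steps 1 and 2 are routine once this is in place.
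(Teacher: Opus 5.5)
Your argument is correct, but it takes a different route from the paper. Both proofs start by reducing $A_i\cap B_j\not\subseteq B_i$ to $A_i\cap B_j\neq\varnothing$. The paper then simply invokes the non-uniform weighted skew inequality of Heged\H{u}s and Frankl, $\sum_i\binom{|A_i|+|B_i|}{|A_i|}^{-1}\le m+1$, as a black box, and bounds each summand below by $\binom{m}{m/2}^{-1}$.

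You instead split the family by $|A_i|=a$ and pad the $B_i$ with fresh elements so that each class is $(a,m-a)$-uniform. The padding size is nonnegative because $A_i,B_i$ are disjoint in $[m]$, and enlarging $B_j$ only helps the cross condition. You then apply the uniform Frankl--Kalai bound, proved by Lov\'asz's exterior-algebra argument with moment-curve vectors; every step checks out. Your version is self-contained and gives $|I_a|\le\binom{m}{a}$ for each $a$. Hence you get $n\le 2^m$ and the weighted bound $\sum_i\binom{m}{|A_i|}^{-1}\le m+1$. The cited Heged\H{u}s--Frankl inequality is stronger in general, since its denominators $\binom{|A_i|+|B_i|}{|A_i|}$ are at most $\binom{m}{|A_i|}$.

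Note, however, that your Step~3 does no real work here. The skew hypothesis already forces the $A_i$ to be pairwise distinct: $A_i=A_j$ with $i<j$ would give $A_i\cap B_j=A_j\cap B_j=\varnothing$. So the $A_i$ in $I_a$ are distinct $a$-subsets of $[m]$, and $|I_a|\le\binom{m}{a}$ is immediate. In fact $n\le 2^m\le(m+1)\binom{m}{\lfloor m/2\rfloor}$ follows in one line. The lemma as stated therefore needs neither your padding and exterior algebra nor the Heged\H{u}s--Frankl theorem. The uniform skew bound only has content when the $A_i$ range over a ground set larger than $a+b$, which is not the case after your padding.
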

\begin{proof}
	As in the proof of \zcref[S]{lma:combinatorial-lemma}, disjointness of $\setA_i,\setB_i$ makes $\setA_i\cap\setB_j\not\subseteq\setB_i$ equivalent to $\setA_i\cap\setB_j\neq\varnothing$.
	By~\cite{HegedusF24}, $\sum_{i=1}^{\numberOfPairs}\binom{\card{\setA_i}+\card{\setB_i}}{\card{\setA_i}}^{-1}\le \setsSize+1$, and since every summand is at least $\binom{\setsSize}{\setsSize/2}^{-1}$, we get $\numberOfPairs \le \pars*{\setsSize+1}\binom{\setsSize}{\setsSize/2}$.
\end{proof}

\section{The successor-closed counting polynomial of functional digraphs}
\label{sec:counting-polynomial}

We first consider a vertex-rooted tree $\vertexRootedtree{}$ and for $\vertexU \in \vertexRootedtree{}$ we define  $\treeRootedAt{\vertexU}$ to be the vertex-rooted subtree of $\vertexRootedtree{}$ rooted at $\vertexU$.
Then, we define
\begin{equation}\label{equ:successor-closed-counting-polynomial-for-tree}
	\generatingFunction{{\treeRootedAt{\vertexU}}}{\variable}\defeq
	\begin{cases}
		1 + \variable, & \text{if $\vertexU$ is a leaf,}\\
		1 + \variable\prod_{\vertexV \in \rev{\graphG}(\vertexU)} \generatingFunction{{\treeRootedAt{\vertexV}}}{\variable}, & \text{otherwise.}
	\end{cases}
\end{equation}
Building on this definition, we define, for every cycle-rooted tree $\cycleRootedtree{}$ whose root is a directed cycle $\cycle{}$ containing vertices $\vertexU_1, \dotsc, \vertexU_{\cycleLength{}}$, the polynomial
\begin{equation}\label{equ:successor-closed-counting-polynomial-for-cycle-rooted-tree}
	\generatingFunction{{\cycleRootedtree{}}}{\variable} \defeq 1 + \prod_{i = 1}^{\cycleLength{}} \left(\generatingFunction{{\treeRootedAt{\vertexU_i}}}{\variable} - 1\right)\enspace,
\end{equation}
where $\treeRootedAt{\vertexU_i}$ is the vertex-rooted tree rooted at $\vertexU_i$, excluding the vertices in the cycle.
\noindent We later prove that these two expressions are the correct generating functions for the vertex-rooted and cycle-rooted trees.

\begin{theorem}\label{thm:successor-closed-counting-polynomial-for-functional-digraph}
	For every functional digraph $\graphG \defeq \functionalGraphDecomposition$, its successor-closed counting polynomial is
	\begin{equation*}
		\generatingFunction{\graphG}{\variable} = \left(\prod_{i = 1}^{\numberOfVertexRootedTree} \generatingFunction{{\vertexRootedtree{i}}}{\variable}\right) \cdot \left(\prod_{i = 1}^{\numberOfCycleRootedTree} \generatingFunction{{\cycleRootedtree{i}}}{\variable}\right)\enspace.
	\end{equation*}
\end{theorem}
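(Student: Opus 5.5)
By the Product rule (\zcref[S]{lma:product-rule}) and the decomposition of \zcref[S]{pro:functional-graph-as-cycle-rooted-forest}, the successor-closed counting polynomial of $\graphG$ factors as the product of those of its components. It therefore suffices to prove two claims. First, for every vertex-rooted tree $\vertexRootedtree{}$ with root $r$, the true counting polynomial of $\vertexRootedtree{}$ equals the recursively defined $\generatingFunction{\treeRootedAt{r}}{\variable}$ of \eqref{equ:successor-closed-counting-polynomial-for-tree}. Second, for every cycle-rooted tree, the true counting polynomial equals \eqref{equ:successor-closed-counting-polynomial-for-cycle-rooted-tree}.

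For vertex-rooted trees, I will argue by induction on the height of $\vertexU$ that successor-closed subsets of $\treeRootedAt{\vertexU}$ are counted by $\generatingFunction{\treeRootedAt{\vertexU}}{\variable}$. A successor-closed $\indicesSet$ either avoids $\vertexU$ or contains it.
\begin{itemize}
\item If $\vertexU\notin\indicesSet$, then no vertex of the subtree can be in $\indicesSet$. Every vertex $\vertexV$ of $\treeRootedAt{\vertexU}$ has a directed path to $\vertexU$, and closure under successors would force $\vertexU\in\indicesSet$. So $\indicesSet=\varnothing$, contributing the term $1$.
\item If $\vertexU\in\indicesSet$, then $\indicesSet\setminus\set{\vertexU}$ splits as a disjoint union over the children $\vertexV\in\rev{\graphG}(\vertexU)$ of arbitrary successor-closed subsets of the $\treeRootedAt{\vertexV}$. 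Within $\treeRootedAt{\vertexU}$, the root $\vertexU$ is a sink, and each child's only out-edge points to $\vertexU\in\indicesSet$. Conversely, any such union together with $\vertexU$ is closed. By the induction hypothesis and the product rule, this case contributes $\variable\prod_{\vertexV}\generatingFunction{\treeRootedAt{\vertexV}}{\variable}$.
\end{itemize}
The leaf case gives $1+\variable$. One subtlety is that the subtree $\treeRootedAt{\vertexU}$ is considered as a graph in its own right, with $\vertexU$ treated as a sink. This is consistent because in the ambient graph a subset of $\treeRootedAt{\vertexU}$ containing $\vertexU$ is only ever used together with $\vertexU$'s successors.

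For a cycle-rooted tree with cycle $\vertexU_1,\dots,\vertexU_{\cycleLength{}}$, the key observation is that a successor-closed set containing any cycle vertex contains the whole cycle. Moreover, if it contains any vertex at all, it contains a cycle vertex, since every path eventually enters the cycle. So either $\indicesSet=\varnothing$, contributing $1$, or $\indicesSet\supseteq\set{\vertexU_1,\dots,\vertexU_{\cycleLength{}}}$. In the latter case, $\indicesSet$ decomposes as a union over $i$ of sets $\indicesSet_i\subseteq\treeRootedAt{\vertexU_i}$, each containing $\vertexU_i$ and closed within $\treeRootedAt{\vertexU_i}$ when $\vertexU_i$ is regarded as a sink. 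The choices are independent, and the out-edge of each $\vertexU_i$ lands on $\vertexU_{i+1}\in\indicesSet$. By the tree case, the sets $\indicesSet_i$ containing $\vertexU_i$ are counted by $\generatingFunction{\treeRootedAt{\vertexU_i}}{\variable}-1$. Multiplying over $i$ and adding $1$ gives \eqref{equ:successor-closed-counting-polynomial-for-cycle-rooted-tree}.

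The main obstacle is bookkeeping rather than depth. One must justify carefully that closure in the ambient graph coincides with closure in the sub-piece once the root (or the cycle) is known to be included, and that the "contains the root" parts multiply independently. I would isolate this as a small observation: if $\vertexU\in\indicesSet$, then the constraints on $\indicesSet\cap\treeRootedAt{\vertexV}$ for distinct children $\vertexV$ involve disjoint edge sets.
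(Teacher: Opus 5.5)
Your proposal is correct and takes essentially the same route as the paper. Both arguments split on whether the root lies in the set for vertex-rooted trees, observe that a nonempty successor-closed subset of a cycle-rooted tree contains the whole cycle and then makes independent nonempty choices in each hanging tree, and apply the product rule over components. Counting the sets containing $u_i$ directly by $A_{T_{u_i}}(x)-1$ is the same computation as the paper's $1+x^{\ell}\prod_{i}(A_{T_{u_i}}(x)-1)/x$, without the intermediate division by $x$; your justification of the closure bookkeeping also fills in details the paper leaves implicit.
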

\noindent Before presenting the proof, we present some useful special cases of this theorem.

\begin{corollary}\label{cly:generating-function-cycle}
	The cycle $\cycle{\cycleLength{}}$ has successor-closed counting polynomial:
	\begin{equation*}
		\generatingFunction{\cycle{\cycleLength{}}}{\variable} = 1 + \variable^{\cycleLength{}}\enspace.
	\end{equation*}
\end{corollary}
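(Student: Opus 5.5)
The plan is to read the corollary off \zcref[S]{thm:successor-closed-counting-polynomial-for-functional-digraph} as a special case, and to confirm it by a direct count that does not depend on that theorem.

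View $\cycle{\cycleLength{}}$ as a functional digraph whose decomposition has no vertex-rooted trees and exactly one cycle-rooted tree $\cycleRootedtree{}$. The cycle of $\cycleRootedtree{}$ is $\vertexU_1, \dotsc, \vertexU_{\cycleLength{}}$, and no further vertices hang off it. So each tree $\treeRootedAt{\vertexU_i}$, after excluding the other cycle vertices, is the single vertex $\vertexU_i$. That vertex is a leaf, and by \eqref{equ:successor-closed-counting-polynomial-for-tree} it has $\generatingFunction{{\treeRootedAt{\vertexU_i}}}{\variable} = 1 + \variable$. Substituting into \eqref{equ:successor-closed-counting-polynomial-for-cycle-rooted-tree} gives
\begin{equation*}
	\generatingFunction{\cycle{\cycleLength{}}}{\variable} = 1 + \prod_{i=1}^{\cycleLength{}} \left((1+\variable) - 1\right) = 1 + \variable^{\cycleLength{}}\enspace.
\end{equation*}
The empty product over vertex-rooted trees contributes the factor $1$.

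Since the proof of \zcref[S]{thm:successor-closed-counting-polynomial-for-functional-digraph} comes only later, I will also check the claim directly. Let $\indicesSet$ be a successor-closed subset containing some $\vertexU_i$. Then $\indicesSet$ contains $\vertexU_{i+1}$ (indices taken cyclically), and by induction along the cycle it contains every vertex. Hence the only successor-closed subsets are $\varnothing$ and the full vertex set, which contribute $1$ and $\variable^{\cycleLength{}}$ respectively.

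There is no real obstacle here. The one point needing care is the convention that $\treeRootedAt{\vertexU_i}$ excludes the other cycle vertices, so that each slot is a bare leaf with polynomial $1+\variable$, and not something larger. For $\cycleLength{}=1$ the formula gives $1+\variable$, which agrees with \zcref[S]{rmk:star-and-concentrated-cycle-rooted-star-triviality} treating a length-$1$ cycle as the single vertex $\starGraph{0}$.
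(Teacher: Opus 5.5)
Your proposal is correct and takes the paper's route: the paper presents this corollary as a special case of \zcref[S]{thm:successor-closed-counting-polynomial-for-functional-digraph}, where each cycle slot is a bare leaf contributing $(1+\variable)-1=\variable$ to the product in \eqref{equ:successor-closed-counting-polynomial-for-cycle-rooted-tree}. Your direct count (a successor-closed set containing one cycle vertex must contain all of them) is a valid independent check, which the paper does not spell out.
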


\begin{corollary}\label{cly:generating-function-path}
	The path $\pathGraph{\pathLength}$ has successor-closed counting polynomial:
	\begin{equation*}
		\generatingFunction{\pathGraph{\pathLength}}{\variable} = \sum_{i=0}^{\pathLength + 1} \variable^i\enspace.
	\end{equation*}
\end{corollary}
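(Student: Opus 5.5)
The statement is \zcref[S]{cly:generating-function-path}, and the plan is to read it off the tree recursion \eqref{equ:successor-closed-counting-polynomial-for-tree}, which \zcref[S]{thm:successor-closed-counting-polynomial-for-functional-digraph} identifies as the successor-closed counting polynomial of a vertex-rooted tree. Here $\pathGraph{\pathLength}$ is a single vertex-rooted tree. So \zcref[S]{thm:successor-closed-counting-polynomial-for-functional-digraph}, applied with $\numberOfVertexRootedTree = 1$ and $\numberOfCycleRootedTree = 0$, gives $\generatingFunction{\pathGraph{\pathLength}}{} = \generatingFunction{\treeRootedAt{r}}{}$, where $r$ is the root. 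Label the path vertices $\vertexU_0 \to \vertexU_1 \to \dotsb \to \vertexU_{\pathLength}$, so that $\vertexU_0$ is the unique leaf and $\vertexU_{\pathLength} = r$. Each non-leaf $\vertexU_j$ has exactly one in-neighbour, namely $\rev{\graphG}(\vertexU_j) = \set{\vertexU_{j-1}}$.

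I will prove by induction on $j$ that $\generatingFunction{\treeRootedAt{\vertexU_j}}{\variable} = \sum_{i=0}^{j+1} \variable^i$.
\begin{itemize}
\item \emph{Base case $j = 0$.} The leaf case of the recursion gives $1 + \variable$, as claimed.
\item \emph{Inductive step.} The recursion gives
\[
\generatingFunction{\treeRootedAt{\vertexU_j}}{\variable} = 1 + \variable\, \generatingFunction{\treeRootedAt{\vertexU_{j-1}}}{\variable} = 1 + \variable \sum_{i=0}^{j} \variable^i = \sum_{i=0}^{j+1} \variable^i.
\]
\end{itemize}
Setting $j = \pathLength$ yields the statement.

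As an independent sanity check, there is also a direct combinatorial argument. A successor-closed subset must contain every vertex downstream of any vertex it contains. So the successor-closed subsets are exactly the suffixes $\set{\vertexU_{\pathLength - i + 1}, \dotsc, \vertexU_{\pathLength}}$ for $i = 0, \dotsc, \pathLength + 1$, and there is exactly one of each size.

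I expect no real obstacle. The only point needing care is bookkeeping. The recursion is over the root's in-neighbours in the reversed graph, so in a path each product has exactly one factor. The index shift $\treeSize{} = \pathLength + 1$ is what makes the sum run up to $\pathLength + 1$.
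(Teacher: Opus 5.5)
Your proof is correct and takes the same route as the paper, which states this corollary without a separate proof as an immediate special case of the recursive formula for vertex-rooted trees; your induction along the path simply unrolls that recursion, with each product having a single factor. Your suffix-counting argument is a valid independent check.
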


\begin{corollary}\label{cly:generating-function-vertex-rooted-star}
	The vertex-rooted star $\starGraph{\treeInternalSize{}}$ has successor-closed counting polynomial:
	\begin{equation*}
		\generatingFunction{\starGraph{\treeInternalSize{}}}{\variable} = 1 + \variable\pars*{1 + \variable}^{\treeInternalSize{}}\enspace.
	\end{equation*}
\end{corollary}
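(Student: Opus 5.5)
The plan is to specialise Theorem~\ref{thm:successor-closed-counting-polynomial-for-functional-digraph}, and more precisely the recursive definition~\eqref{equ:successor-closed-counting-polynomial-for-tree}, to the star $\starGraph{\treeInternalSize{}}$. Since the star is a single vertex-rooted tree, the theorem gives $\generatingFunction{\starGraph{\treeInternalSize{}}}{\variable} = \generatingFunction{\treeRootedAt{r}}{\variable}$, where $r$ is the root. I will evaluate this by one step of the recursion.

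First handle the degenerate case $\treeInternalSize{} = 0$. Here $\starGraph{0}$ is a single vertex, which is a leaf, so the first branch of~\eqref{equ:successor-closed-counting-polynomial-for-tree} gives $1 + \variable$. This equals $1 + \variable(1+\variable)^0$, as claimed.

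For $\treeInternalSize{} \geq 1$, the root is not a leaf. Its in-neighbours, that is $\rev{\graphG}(r)$, are exactly the $\treeInternalSize{}$ leaves. Each leaf $\vertexV$ satisfies $\generatingFunction{\treeRootedAt{\vertexV}}{\variable} = 1 + \variable$ by the first branch. Substituting into the second branch gives
\[
\generatingFunction{\treeRootedAt{r}}{\variable} = 1 + \variable\prod_{\vertexV}(1+\variable) = 1 + \variable(1+\variable)^{\treeInternalSize{}} .
\]

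As a sanity check independent of the theorem, I would also verify the formula by direct counting. A successor-closed set either omits the root, in which case it must be empty because every leaf points to the root. Or it contains the root together with an arbitrary subset of the leaves. These two cases contribute $1$ and $\variable(1+\variable)^{\treeInternalSize{}}$ respectively.

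There is no real obstacle here. The only point needing care is the boundary case $\treeInternalSize{}=0$, where the root itself is a leaf and so the first branch of the recursion applies rather than the product formula.
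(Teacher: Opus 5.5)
Your proposal is correct and follows the paper's approach: the paper obtains this corollary by specialising the recursive tree formula behind \zcref[S]{thm:successor-closed-counting-polynomial-for-functional-digraph} to a root whose children are all leaves, each contributing $1+\variable$. Your separate treatment of $\treeInternalSize{}=0$ and the direct-counting check are consistent with that argument.
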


\begin{corollary}\label{cly:generating-function-concentrated-cycle-rooted-star}
	The concentrated cycle-rooted star $\concentratedCycleRootedStar{\cycleLength{}}{\treeInternalSize{}}$ has successor-closed counting polynomial:
	\begin{equation*}
		\generatingFunction{\concentratedCycleRootedStar{\cycleLength{}}{\treeInternalSize{}}}{\variable} = 1 + \variable^{\cycleLength{}} \pars*{1 + \variable}^{\treeInternalSize{}}\enspace.
	\end{equation*}
\end{corollary}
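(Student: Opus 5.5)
The statement to prove is \zcref[S]{cly:generating-function-concentrated-cycle-rooted-star}: $\generatingFunction{\concentratedCycleRootedStar{\cycleLength{}}{\treeInternalSize{}}}{\variable} = 1 + \variable^{\cycleLength{}}\pars*{1+\variable}^{\treeInternalSize{}}$. I would read it off \zcref[S]{thm:successor-closed-counting-polynomial-for-functional-digraph}, which I may assume. Since $\concentratedCycleRootedStar{\cycleLength{}}{\treeInternalSize{}}$ is connected and contains a cycle, its decomposition in \zcref[S]{pro:functional-graph-as-cycle-rooted-forest} has a single cycle-rooted tree and no vertex-rooted trees. The theorem therefore gives $\generatingFunction{\concentratedCycleRootedStar{\cycleLength{}}{\treeInternalSize{}}}{\variable} = 1 + \prod_{i=1}^{\cycleLength{}}\pars*{\generatingFunction{\treeRootedAt{\vertexU_i}}{\variable} - 1}$ by \eqref{equ:successor-closed-counting-polynomial-for-cycle-rooted-tree}.

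Next I compute each factor from \eqref{equ:successor-closed-counting-polynomial-for-tree}, where $\treeRootedAt{\vertexU_i}$ is the tree hanging at $\vertexU_i$ with the other cycle vertices excluded.
\begin{itemize}
\item For $i \geq 2$, the concentration hypothesis makes $\vertexU_i$ a bare vertex, hence a leaf. So $\generatingFunction{\treeRootedAt{\vertexU_i}}{\variable} = 1+\variable$ and the factor is $\variable$.
\item For $i = 1$, $\vertexU_1$ has $\treeInternalSize{}$ leaf children, each contributing $1+\variable$. So $\generatingFunction{\treeRootedAt{\vertexU_1}}{\variable} = 1 + \variable\pars*{1+\variable}^{\treeInternalSize{}}$ and the factor is $\variable\pars*{1+\variable}^{\treeInternalSize{}}$.
\end{itemize}
This also covers $\treeInternalSize{} = 0$, where $\vertexU_1$ is itself a leaf and the factor is $\variable = \variable(1+\variable)^0$. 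Multiplying the $\cycleLength{}$ factors gives $\variable^{\cycleLength{}}\pars*{1+\variable}^{\treeInternalSize{}}$, which is the claim.

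As a consistency check independent of the theorem, I would also argue directly. A successor-closed set containing any vertex must, by following out-edges, contain the whole cycle. It is then closed for every choice of the $\treeInternalSize{}$ leaves, since each leaf points into the cycle. The only other successor-closed set is the empty set. This gives $1 + \variable^{\cycleLength{}}(1+\variable)^{\treeInternalSize{}}$.

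The only real subtlety is the case $\cycleLength{} = 1$, where the graph would have a self-loop. \zcref[S]{rmk:star-and-concentrated-cycle-rooted-star-triviality} identifies this graph with $\starGraph{\treeInternalSize{}}$, and the formula then agrees with \zcref[S]{cly:generating-function-vertex-rooted-star}. No step is a genuine obstacle; the corollary is a direct substitution.
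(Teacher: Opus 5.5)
Your proposal is correct and takes the same approach as the paper, which gives no separate proof and treats this corollary as an immediate special case of the cycle-rooted tree formula. Each of the $\ell - 1$ bare cycle vertices contributes a factor $x$ and the loaded vertex contributes $x(1+x)^{s}$. Your direct combinatorial check and your handling of $\ell = 1$ are both sound.
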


\begin{proof}[Proof of {\zcref[S]{thm:successor-closed-counting-polynomial-for-functional-digraph}}]
	We first analyze the case of a vertex-rooted tree and then continue with a cycle-rooted tree.

	\paragraph{The case of vertex-rooted trees} 
	We show that \zcref[S]{equ:successor-closed-counting-polynomial-for-tree} is the correct generating function for a vertex-rooted tree.
	At $\vertexU$, there are two possibilities for a successor-closed subset: either $\vertexU$ is not selected, in which case no vertex in any child subtree can be selected, or $\vertexU$ is selected, in which case we may independently choose a successor-closed subset from each child subtree.
	From this observation, we directly obtain \zcref[S]{equ:successor-closed-counting-polynomial-for-tree}.
	For a leaf $\vertexU$, the generating function is $\generatingFunction{{\treeRootedAt{\vertexU}}}{\variable} = 1 + \variable$ as there are two successor-closed subsets: the empty set and the singleton containing $\vertexU$.

	\paragraph{The case of cycle-rooted trees} 
	We now consider a cycle-rooted tree $\cycleRootedtree{}$ whose root is a directed cycle $\cycle{}$ containing vertices $\vertexU_1, \dotsc, \vertexU_{\cycleLength{}}$.
	A successor-closed subset either contains nothing or a product of non-empty successor-closed subsets of the child subtrees and the complete cycle.
	Therefore, the generating function is
	\begin{align*}
		\generatingFunction{{\cycleRootedtree{}}}{\variable} &= 1 + \variable^{\cycleLength{}} \prod_{i = 1}^{\cycleLength{}} \frac{\generatingFunction{{\treeRootedAt{\vertexU_i}}}{\variable} - 1}{\variable}\\
		&=1 + \prod_{i = 1}^{\cycleLength{}} \left(\generatingFunction{{\treeRootedAt{\vertexU_i}}}{\variable} - 1 \right)\enspace,
	\end{align*}
	where the division by $\variable$ is here to prevent double counting the root of the chosen subtrees and the minus one to remove the empty selection. This is exactly \zcref[S]{equ:successor-closed-counting-polynomial-for-cycle-rooted-tree}.

	Finally, since a functional digraph $\graphG$ is a disjoint union of trees, \zcref[S]{lma:product-rule} implies that the successor-closed counting polynomial is simply the product of everything.
\end{proof}

We conclude this section with two useful inequalities.

\begin{definition}
	Let $\vertexRootedtree{}$ be a vertex-rooted tree and $\vertexV \neq \vertexW$ be a leaf different from the root.
	The \defemph{leaf removal at $\vertexV$}, denoted by $\leafRemove{\vertexRootedtree{}}{\vertexV}$, is the tree obtained from $\vertexRootedtree{}$ by deleting $\vertexV$ and its incident edge.
\end{definition}

\begin{lemma}[Removing a leaf]\label{lma:removing-a-leaf}
	For every vertex-rooted tree $\vertexRootedtree{}$ and leaf $\vertexV$ different from its root, letting $\widehat{\vertexRootedtree{}} \defeq \leafRemove{\vertexRootedtree{}}{\vertexV}$, it holds that
	\begin{equation*}
		\generatingFunction{\vertexRootedtree{}}{} - \variable \generatingFunction{\widehat{\vertexRootedtree{}}}{} \polyGreater 1\enspace.
	\end{equation*}
\end{lemma}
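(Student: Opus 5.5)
We argue by induction on the depth of the removed leaf $\vertexV$, measured as the distance from $\vertexV$ to the root $r$ of $\vertexRootedtree{}$. The claim to prove is that $\generatingFunction{\vertexRootedtree{}}{} - \variable\generatingFunction{\widehat{\vertexRootedtree{}}}{} - 1 \polyGreater 0$. Throughout we use the recursion \zcref[S]{equ:successor-closed-counting-polynomial-for-tree}, and in particular that every $\generatingFunction{{\treeRootedAt{\vertexU}}}{}$ has constant term $1$ and nonnegative coefficients. From these two facts, $\generatingFunction{{\treeRootedAt{\vertexU}}}{} - 1 \polyGreater 0$ for every $\vertexU$.

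\textbf{Base case ($\vertexV$ is a child of $r$).} Let $\vertexV, \vertexU_1, \dotsc, \vertexU_m$ be the children of $r$, and write $P \defeq \prod_{j} \generatingFunction{{\treeRootedAt{\vertexU_j}}}{}$, with $P = 1$ if $m = 0$. Then $\generatingFunction{\vertexRootedtree{}}{} = 1 + \variable(1+\variable)P$ and $\generatingFunction{\widehat{\vertexRootedtree{}}}{} = 1 + \variable P$. Hence
\begin{equation*}
\generatingFunction{\vertexRootedtree{}}{} - \variable\generatingFunction{\widehat{\vertexRootedtree{}}}{} - 1 = \variable P + \variable^2 P - \variable - \variable^2 P = \variable(P-1)\enspace,
\end{equation*}
which is $\polyGreater 0$ because $P$ is a product of polynomials with nonnegative coefficients and constant term $1$.

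\textbf{Inductive step ($\vertexV$ lies in the subtree of a child $\vertexU$ of $r$ at depth $\geq 1$ inside $\treeRootedAt{\vertexU}$).} Let $Q$ be the product of the polynomials of the other children of $r$, so $Q \polyGreater 1$ in the sense that $Q - 1 \polyGreater 0$. Write $X \defeq \generatingFunction{{\treeRootedAt{\vertexU}}}{}$ and $\widehat{X}$ for the polynomial of the same subtree with $\vertexV$ removed. By induction applied to the subtree $\treeRootedAt{\vertexU}$, we have $X = 1 + \variable\widehat{X} + R$ with $R \polyGreater 0$. Then
\begin{equation*}
\generatingFunction{\vertexRootedtree{}}{} - \variable\generatingFunction{\widehat{\vertexRootedtree{}}}{} - 1 = \variable Q X - \variable - \variable^2 Q\widehat{X} = \variable Q(1 + R) - \variable = \variable(Q-1) + \variable Q R\enspace,
\end{equation*}
and both terms are $\polyGreater 0$.

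The only potential obstacle is getting the induction set up correctly: the statement has to be applied to the subtree rooted at the child $\vertexU$ of the root, in which $\vertexV$ is still a non-root leaf. The base case must also cover the situation where $\vertexV$ is the only child of $r$, so that $\widehat{\vertexRootedtree{}}$ is a single leaf. There the recursion gives $\generatingFunction{\widehat{\vertexRootedtree{}}}{} = 1 + \variable$, which is consistent with taking $P = 1$ above, and the difference is exactly $0$. Beyond these checks, the computation is a short algebraic verification.
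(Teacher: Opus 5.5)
Your proof is correct and follows the paper's argument: induction on the depth of $v$, the same base-case computation giving $x(P-1)$, and an inductive step through the child of the root whose subtree contains $v$. Your explicit decomposition $x(Q-1) + xQR$ spells out the final nonnegativity step, which the paper leaves implicit when it cites the induction hypothesis.
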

\begin{proof}
	We prove this theorem by induction on the depth of $\vertexV$ in the tree.
	Let $\vertexW$ be the root of the tree.
	
	\paragraph{Base case ($\vertexV$ is a direct child of $\vertexW$)}
	Let $\vertexU_1, \dotsc, \vertexU_q$ be the other children of the root $\vertexW$ and $\alternativeCountingPolynomialW{}  \polyGreater 1$ be the product of their generating functions, that is $\alternativeCountingPolynomialW{\variable} \defeq \prod_{i =1}^q \generatingFunction{{\treeRootedAt{\vertexU_i}}}{\variable}$.
	From \zcref[S]{thm:successor-closed-counting-polynomial-for-functional-digraph}, we get the following two polynomials:
	\begin{align*}
		\generatingFunction{\vertexRootedtree{}}{}&= 1 + \variable (1 + \variable) \alternativeCountingPolynomialW{\variable}\\
		&=1 +  \variable \alternativeCountingPolynomialW{\variable}  + \variable^2 \alternativeCountingPolynomialW{\variable} \enspace, \\
		\generatingFunction{\widehat{\vertexRootedtree{}}}{} &= 1 + \variable \alternativeCountingPolynomialW{\variable}
	\end{align*}
	and the difference is 
	\begin{equation*}
		\generatingFunction{\vertexRootedtree{}}{} - \variable \generatingFunction{\widehat{\vertexRootedtree{}}}{} = 1 + \variable\pars*{\alternativeCountingPolynomialW{\variable} - 1} \polyGreater 1\enspace,
	\end{equation*}
	since $\alternativeCountingPolynomialW{} \polyGreater 1$.
	
	\paragraph{Inductive step ($\vertexV$ is not a direct child of $\vertexW$)}
	Let $\vertexV'$ be the only successor of $\vertexV$ right below $\vertexW$, $\vertexU_1, \dotsc, \vertexU_q$ be the other children of the root $\vertexW$, and $\alternativeCountingPolynomialW{} \polyGreater 1$ be, as before, the product of their generating functions.
	From \zcref[S]{thm:successor-closed-counting-polynomial-for-functional-digraph}, we have:
	\begin{align*}
		\generatingFunction{\vertexRootedtree{}}{}&= 1 + \variable \alternativeCountingPolynomialW{\variable} \cdot \generatingFunction{\treeRootedAt{\vertexV'}}{\variable} \enspace, \\
		\generatingFunction{\widehat{\vertexRootedtree{}}}{} &= 1 + \variable \alternativeCountingPolynomialW{\variable} \cdot \generatingFunction{\widehat{\treeRootedAt{\vertexV'}}}{\variable}\enspace, 
	\end{align*}
	where $\widehat{\treeRootedAt{\vertexV'}} \defeq \leafRemove{\treeRootedAt{\vertexV'} }{\vertexV}$.
	The difference is
	\begin{align*}
		\generatingFunction{\vertexRootedtree{}}{} - \variable \generatingFunction{\widehat{\vertexRootedtree{}}}{} &= 1 + \variable \pars*{\alternativeCountingPolynomialW{\variable}\cdot \generatingFunction{\treeRootedAt{\vertexV'}}{\variable}  - \variable\alternativeCountingPolynomialW{\variable} \cdot \generatingFunction{\widehat{\treeRootedAt{\vertexV'}}}{\variable} - 1} \\
		&=1 + \variable \pars*{\alternativeCountingPolynomialW{\variable}\cdot \pars*{ \generatingFunction{\treeRootedAt{\vertexV'}}{\variable}  - \variable \cdot \generatingFunction{\widehat{\treeRootedAt{\vertexV'}}}{\variable}}- 1}\\
		&\polyGreater 1\enspace,
	\end{align*}
	where the last inequality follows from the induction hypothesis.
\end{proof}

\begin{definition}
	Let $\cycleRootedtree{}$ be a cycle-rooted tree and $\vertexV$ be a leaf of one of its vertex-rooted subtrees, rooted at $\vertexW$.
	The \defemph{cycle extension at $\vertexV$}, denoted by $\cycleExtend{\cycleRootedtree{}}{\vertexV}$, is the cycle-rooted tree obtained from $\cycleRootedtree{}$ by replacing the subtree rooted at $\vertexW$ with $\leafRemove{\vertexRootedtree{\vertexW}}{\vertexV}$, and inserting a new bare vertex into the cycle, extending the cycle length by one.
	Note that the number of vertices and edges is preserved.
\end{definition}

\begin{lemma}[Extending a cycle]\label{lma:leaf-to-cycle}
	For every cycle-rooted tree $\cycleRootedtree{}$ and leaf $\vertexV$ of one of its vertex-rooted subtrees, letting $\widehat{\cycleRootedtree{}} \defeq \cycleExtend{\cycleRootedtree{}}{\vertexV}$, it holds that
	\begin{equation*}
		\generatingFunction{\cycleRootedtree{}}{}
		\polyGreater
		\generatingFunction{\widehat{\cycleRootedtree{}}}{}\enspace.
	\end{equation*}
\end{lemma}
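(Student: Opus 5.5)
We work directly with the product formula \zcref[S]{equ:successor-closed-counting-polynomial-for-cycle-rooted-tree}. Let the cycle of $\cycleRootedtree{}$ be $\vertexU_1, \dotsc, \vertexU_{\cycleLength{}}$, and assume without loss of generality that the leaf $\vertexV$ lies in the subtree rooted at $\vertexW = \vertexU_1$. Set
\begin{equation*}
	\generatingFunctionC{}{\variable} \defeq \prod_{i=2}^{\cycleLength{}} \pars*{\generatingFunction{\treeRootedAt{\vertexU_i}}{\variable} - 1}.
\end{equation*}
Each factor $\generatingFunction{\treeRootedAt{\vertexU_i}}{} - 1$ equals $\variable$ times a product of counting polynomials, by \zcref[S]{equ:successor-closed-counting-polynomial-for-tree}. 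Hence each factor has nonnegative coefficients, and so $\generatingFunctionC{}{} \polyGreater 0$. Write $\widehat{\tree{}} \defeq \leafRemove{\vertexRootedtree{\vertexW}}{\vertexV}$. The new bare cycle vertex contributes the factor $\generatingFunction{\starGraph{0}}{} - 1 = \variable$. We then have
\begin{equation*}
	\generatingFunction{\cycleRootedtree{}}{} = 1 + \pars*{\generatingFunction{\vertexRootedtree{\vertexW}}{} - 1}\generatingFunctionC{}{}, \qquad
	\generatingFunction{\widehat{\cycleRootedtree{}}}{} = 1 + \variable\pars*{\generatingFunction{\widehat{\tree{}}}{} - 1}\generatingFunctionC{}{}.
\end{equation*}

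Subtracting gives
\begin{equation*}
	\generatingFunction{\cycleRootedtree{}}{} - \generatingFunction{\widehat{\cycleRootedtree{}}}{} = \generatingFunctionC{}{} \cdot \pars*{\generatingFunction{\vertexRootedtree{\vertexW}}{} - \variable\generatingFunction{\widehat{\tree{}}}{} - 1 + \variable}.
\end{equation*}
Since a product of two polynomials with nonnegative coefficients again has nonnegative coefficients, it suffices to show that the second factor is $\polyGreater 0$. This is where \zcref[S]{lma:removing-a-leaf} applies. Whenever $\vertexV$ is a leaf of $\vertexRootedtree{\vertexW}$ different from its root, that lemma gives $\generatingFunction{\vertexRootedtree{\vertexW}}{} - \variable\generatingFunction{\widehat{\tree{}}}{} \polyGreater 1$. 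The second factor is therefore at least $\variable \polyGreater 0$.

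The main point to check is that the hypothesis of \zcref[S]{lma:removing-a-leaf} holds. This requires $\vertexV \neq \vertexW$, meaning that $\vertexV$ must be a genuine non-root leaf of the hanging tree. This is implicit in the definition of the cycle extension: removing the cycle vertex itself would not yield a tree rooted at $\vertexW$. If one wanted to include the degenerate case of a bare cycle vertex, it would not fit the operation as defined, so we restrict to $\vertexV \neq \vertexW$.

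We should also confirm that the indexing is consistent. The inserted bare vertex contributes exactly the factor $\variable$, independently of where it is inserted in the cycle, because \zcref[S]{equ:successor-closed-counting-polynomial-for-cycle-rooted-tree} is symmetric in the cycle vertices. With these checks in place, the inequality follows by this direct factorization.
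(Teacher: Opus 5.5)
Your proposal is correct and is essentially the paper's proof: both factor out the nonnegative product over the untouched cycle vertices and reduce the claim to $A_{T_w} - x A_{\widehat{T}} - 1 + x \succeq 0$, which then follows from the leaf-removal lemma. Your write-up is more careful than the paper's, which writes $T_v$ where $A_{T_w}$ is meant and leaves implicit both the requirement $v \neq w$ and the factor $x$ contributed by the inserted bare vertex.
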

\begin{proof}
	Let $\vertexW$ be the root of the affected vertex-rooted tree and 
	\begin{equation*}
		\widehat{\vertexRootedtree{\vertexW}} \defeq \leafRemove{\vertexRootedtree{\vertexW}}{\vertexV}
	\end{equation*}
	be the new tree.
	By \zcref[S]{thm:successor-closed-counting-polynomial-for-functional-digraph} and by factorizing the common terms, the difference is
	\begin{equation*}
		\generatingFunction{\cycleRootedtree{}}{} - \generatingFunction{\widehat{\cycleRootedtree{}}}{} \positiveProportional \pars*{\vertexRootedtree{\vertexV} - 1} - \variable\pars*{\widehat{\vertexRootedtree{\vertexV}} - 1} = \vertexRootedtree{\vertexV} - \variable \widehat{\vertexRootedtree{\vertexV}} + \variable - 1 \polyGreater 0\enspace,
	\end{equation*}
	where the last inequality follows from \zcref[S]{lma:removing-a-leaf}.
\end{proof}

\section{From functional digraphs to stars and concentrated cycle-rooted stars}
\label{sec:stars-and-concentrated-cycle-rooted-stars}

In this section, we first introduce several transformations that yield closed-form lower and upper bounds.
The resulting theorem is stated below.
\begin{theorem}\label{thm:upper-bound-extremal}
	Let $\graphG \defeq \functionalGraphDecomposition$ be any functional digraph, 
	\begin{equation*}
		\graphF \defeq \pars*{\largeDisjointUnion_{i = 1}^{\numberOfVertexRootedTree} {\pathGraph{\vertexRootedTreeSize{i} - 1}}} \disjointUnion \pars*{\largeDisjointUnion_{i = 1}^{\numberOfCycleRootedTree} {\cycle{\cycleRootedTreeSize{i}}}}\enspace,
	\end{equation*}
	and
	\begin{equation*}
		\graphH \defeq \pars*{\largeDisjointUnion_{i = 1}^{\numberOfVertexRootedTree} {\starGraph{\vertexRootedTreeSize{i} - 1}}} \disjointUnion \pars*{\largeDisjointUnion_{i = 1}^{\numberOfCycleRootedTree} {\concentratedCycleRootedStar{\cycleLength{i}}{\cycleRootedTreeSize{i} - \cycleLength{i}}}}\enspace.
	\end{equation*}
	Then, the inequalities $\generatingFunction{\graphF}{} \polySmaller \generatingFunction{\graphG}{} \polySmaller \generatingFunction{\graphH}{}$ holds.
	Note that
	\begin{equation*}
		\generatingFunction{\graphF}{\variable} = \prod_{i = 1}^{\numberOfVertexRootedTree}\pars*{\sum_{i=0}^{\vertexRootedTreeSize{i}} \variable^i}
		\cdot
		\prod_{i = 1}^{\numberOfCycleRootedTree}\pars*{1 + \variable^{\cycleRootedTreeSize{i}}}\enspace,
	\end{equation*}
	and
	\begin{equation*}
		\generatingFunction{\graphH}{\variable} = \prod_{i = 1}^{\numberOfVertexRootedTree}\pars*{1 + \variable \pars*{1 + \variable}^{\vertexRootedTreeSize{i} - 1}}
		\cdot
		\prod_{i = 1}^{\numberOfCycleRootedTree}\pars*{1 + \variable^{\cycleLength{i}}\pars*{1 + \variable}^{\cycleRootedTreeSize{i} - \cycleLength{i}}}\enspace.
	\end{equation*}
\end{theorem}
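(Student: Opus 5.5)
By \zcref[S]{lma:product-rule} and \zcref[S]{thm:successor-closed-counting-polynomial-for-functional-digraph}, all three polynomials $\generatingFunction{\graphF}{}$, $\generatingFunction{\graphG}{}$, $\generatingFunction{\graphH}{}$ are products over the same index set of one factor per component. The plan is therefore to compare factor by factor and then multiply. The algebraic fact needed is that if $0 \polySmaller P_i \polySmaller Q_i$ for every $i$, then $\prod_i P_i \polySmaller \prod_i Q_i$. This follows by replacing one factor at a time, since $\prod_i Q_i - \prod_i P_i$ telescopes into a sum of products of polynomials with nonnegative coefficients. So it suffices to prove two componentwise statements. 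For a vertex-rooted tree $\vertexRootedtree{}$ of size $\partitionElement$:
\begin{equation*}
	\generatingFunction{\pathGraph{\partitionElement-1}}{} \polySmaller \generatingFunction{\vertexRootedtree{}}{} \polySmaller \generatingFunction{\starGraph{\partitionElement-1}}{}\enspace.
\end{equation*}
For a cycle-rooted tree $\cycleRootedtree{}$ of size $\partitionElement$ and cycle length $\cycleLength{}$:
\begin{equation*}
	\generatingFunction{\cycle{\partitionElement}}{} \polySmaller \generatingFunction{\cycleRootedtree{}}{} \polySmaller \generatingFunction{\concentratedCycleRootedStar{\cycleLength{}}{\partitionElement-\cycleLength{}}}{}\enspace.
\end{equation*}
The displayed closed forms of $\generatingFunction{\graphF}{}$ and $\generatingFunction{\graphH}{}$ then follow from \zcref[S]{cly:generating-function-path,cly:generating-function-cycle,cly:generating-function-vertex-rooted-star,cly:generating-function-concentrated-cycle-rooted-star}.

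For the upper bounds I would argue directly by counting rather than by local transformations. In a vertex-rooted tree, every vertex reaches the root by following out-edges. Hence every nonempty successor-closed subset contains the root, and is determined by a subset of the remaining $\partitionElement-1$ vertices. So for $\indicesSetSize \geq 1$ the number of successor-closed subsets of size $\indicesSetSize$ is at most $\binom{\partitionElement-1}{\indicesSetSize-1}$, which is exactly $\coeff*{\variable^\indicesSetSize}\bigl(1+\variable(1+\variable)^{\partitionElement-1}\bigr)$. The constant coefficients are both $1$. Similarly, in a cycle-rooted tree every vertex eventually reaches the cycle and then traverses all of it. So every nonempty successor-closed subset contains the whole cycle, and is determined by a subset of the $\partitionElement-\cycleLength{}$ non-cycle vertices. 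This gives at most $\binom{\partitionElement-\cycleLength{}}{\indicesSetSize-\cycleLength{}}$ such subsets, matching $1+\variable^{\cycleLength{}}(1+\variable)^{\partitionElement-\cycleLength{}}$.

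For the lower bounds it suffices to exhibit enough successor-closed subsets, since the path and cycle polynomials have all coefficients in $\set*{0,1}$. For the vertex-rooted tree, order the vertices by a breadth-first search from the root along reversed edges. Each prefix of this order is successor-closed, because a vertex's successor (its parent) appears earlier. This yields one successor-closed subset of each size $0,\dotsc,\partitionElement$, which dominates $\sum_{i=0}^{\partitionElement}\variable^i = \generatingFunction{\pathGraph{\partitionElement-1}}{}$. For the cycle-rooted tree, the empty set and the full vertex set are successor-closed, which dominates $1+\variable^{\partitionElement}$.

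I expect no serious obstacle. The only step needing care is the multiplicativity of $\polySmaller$ for nonnegative polynomials, and in particular the check that every factor involved (the $\generatingFunction{\vertexRootedtree{i}}{}$, $\generatingFunction{\cycleRootedtree{i}}{}$, and the comparison polynomials) has nonnegative coefficients. This holds because they are all counting polynomials. If one preferred to stay within the paper's transformation framework, the upper bound for trees could alternatively be obtained by repeatedly re-attaching a deep leaf to the root, using \zcref[S]{lma:removing-a-leaf}. The direct counting argument above, however, seems simpler.
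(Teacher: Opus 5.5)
Your proof is correct, and it takes a genuinely different route from the paper.

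\textbf{The paper's route.} The paper stays inside the recursive formula and uses local moves. For the upper bound, sibling lifts (shown inflationary by a difference computation) turn each tree into a star. Move lifts (shown to leave $A$ unchanged) then concentrate a cycle-rooted star. For the lower bound, inverse sibling lifts are meant to reach a path. Repeated cycle extensions, justified via \zcref[S]{lma:removing-a-leaf,lma:leaf-to-cycle}, reach a bare cycle.

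\textbf{Your route.} You count directly on each component, with $r$ the root and $C$ the cycle. For the upper bound you use the injection $Q\mapsto Q\setminus\{r\}$, resp.\ $Q\mapsto Q\setminus C$. For the lower bound you use the prefixes of a parents-first order, resp.\ $\varnothing$ and $V$. You then multiply, making explicit the monotonicity of products of nonnegative polynomials that the paper uses tacitly.

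\textbf{What each buys.} Your route is shorter and needs the recursive formula only for the closed forms. It also makes the equality cases transparent. Every cycle-rooted star of cycle length $\ell$, concentrated or not, attains $1+x^{\ell}(1+x)^{d-\ell}$. Once the whole cycle is forced into every nonempty successor-closed set, it is irrelevant at which cycle vertex a hanging subtree sits; that is exactly the content of the move-lift invariance.

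Your route also sidesteps a loose step in the paper. An inverse sibling lift needs a leaf that has a sibling. So a root with two children, each having a single child, admits none, and cannot literally be reduced to a path that way. The repair is that the same computation shows moving only some children of $v$ up to $w$ is still inflationary.

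The paper's machinery pays off as a reusable toolkit; for example, \zcref[S]{lma:leaf-to-cycle} drives \zcref[S]{prop:cycle-length-space-monotone}. Your viewpoint gives that too, since $x^{\ell}(1+x)^{d-\ell}-x^{\ell'}(1+x)^{d-\ell'}=x^{\ell}(1+x)^{d-\ell'}\bigl((1+x)^{\ell'-\ell}-x^{\ell'-\ell}\bigr)\succeq 0$ for $\ell<\ell'\le d$.

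\textbf{A correction to your closing aside.} Your proof does not rely on it, but the attribution is wrong. Let $\hat T$ be $T$ with the leaf $v$ removed. \zcref[S]{lma:removing-a-leaf} gives the lower bound $A_T\succeq 1+xA_{\hat T}$. Re-attaching $v$ to the root instead requires the upper bound $A_T\preceq (1+x)A_{\hat T}-x$, the polynomial of the re-attached tree. That inequality is true (split successor-closed sets by whether they contain $v$), but it does not follow from that lemma.
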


\begin{definition}[Space of cycle-length assignments]
	\label{def:cycle-length-space}
	Fix a functional digraph $\graphG \functionalGraphDecomposition$ and let
	\begin{equation*}
		\cycleLengthSpace \defeq \prod_{i = 1}^{\numberOfCycleRootedTree} [\cycleRootedTreeSize{i}]\enspace,
	\end{equation*}
	equipped with the coordinate-wise partial order $\cycleLengthVector \leq \cycleLengthVector'$ if and only if $\cycleLength{i} \leq \cycleLength{i}'$ for every $i \in [\numberOfCycleRootedTree]$.
	For $\cycleLengthVector \in \cycleLengthSpace$, we define
	\begin{equation*}
		\graphIOf{\cycleLengthVector} \defeq \pars*{\largeDisjointUnion_{i = 1}^{\numberOfVertexRootedTree} \starGraph{\vertexRootedTreeSize{i} - 1}} \disjointUnion \pars*{\largeDisjointUnion_{i = 1}^{\numberOfCycleRootedTree} \concentratedCycleRootedStar{\cycleLength{i}}{\cycleRootedTreeSize{i} - \cycleLength{i}}}\enspace.
	\end{equation*}
	Its generating function is
	\begin{equation*}
		\generatingFunction{\graphIOf{\cycleLengthVector}}{\variable} = \prod_{i = 1}^{\numberOfVertexRootedTree}\pars*{1 + \variable \pars*{1 + \variable}^{\vertexRootedTreeSize{i} - 1}}
		\cdot
		\prod_{i = 1}^{\numberOfCycleRootedTree}\pars*{1 + \variable^{\cycleLength{i}} \pars*{1 + \variable}^{\cycleRootedTreeSize{i} - \cycleLength{i}}}\enspace.
	\end{equation*}
\end{definition}

\noindent The map $\cycleLengthVector \mapsto \graphIOf{\cycleLengthVector}$ is a bijection between $\cycleLengthSpace$ and the set of functional digraphs whose vertex-rooted components are the stars $\starGraph{\vertexRootedTreeSize{i}-1}$ and whose $i$-th cycle-rooted component is the concentrated cycle-rooted star $\concentratedCycleRootedStar{\cycleLength{i}}{\cycleRootedTreeSize{i}-\cycleLength{i}}$, on the same underlying component sizes (recall $\concentratedCycleRootedStar{1}{\treeInternalSize{}} \defeq \starGraph{\treeInternalSize{}}$). Through this bijection, $\pars*{\cycleLengthSpace, \leq}$ is isomorphic to this space of graphs, ordered by the cycle length of each component.

\begin{proposition}[Monotonicity and edge count on $\cycleLengthSpace$]
	\label{prop:cycle-length-space-monotone}
	The map $\cycleLengthVector \mapsto \generatingFunction{\graphIOf{\cycleLengthVector}}{}$ is coordinate-wise antitone on $\pars*{\cycleLengthSpace, \leq}$ with respect to the coefficient-wise order: for every $i$ and every $1 \leq t < t' \leq \cycleRootedTreeSize{i}$,
	\begin{equation*}
		\graphIOf{\cycleLengthVector} \big|_{\cycleLength{i} \mapsto t} \;\polyGreater\; \graphIOf{\cycleLengthVector} \big|_{\cycleLength{i} \mapsto t'}\enspace.
	\end{equation*}
	Consequently,
	\begin{equation*}
		\generatingFunction{\graphIOf{\pars*{1,\dotsc,1}}}{} \;\polyGreater\; \generatingFunction{\graphIOf{\cycleLengthVector}}{} \;\polyGreater\; \generatingFunction{\graphIOf{\pars*{\cycleRootedTreeSize{1}, \dotsc, \cycleRootedTreeSize{\numberOfCycleRootedTree}}}}{}
	\end{equation*}
	for every $\cycleLengthVector \in \cycleLengthSpace$.
	Moreover, the number of edges removed relative to the all-genuine-cycle baseline $\graphIOf{\pars*{\cycleRootedTreeSize{1}, \dotsc, \cycleRootedTreeSize{\numberOfCycleRootedTree}}}$ equals $\numberOfCycleRootedTree - \hammingWeight{\cycleLengthVector - \mathbf 1}$, where $\hammingWeight{\cdot}$ denotes the Hamming weight.
	In other words, the number of edges removed is the number of coordinates at which $\cycleLengthVector$ sits at the bottom of $\cycleLengthSpace$.
\end{proposition}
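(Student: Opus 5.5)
The plan is to reduce everything to a single-factor comparison, using the closed form of $\generatingFunction{\graphIOf{\cycleLengthVector}}{}$ from \zcref[S]{def:cycle-length-space}. Fix $i$ and write $\partitionElement \defeq \cycleRootedTreeSize{i}$. Changing $\cycleLength{i}$ changes only the $i$-th cycle factor. All other factors are of the form $1 + \variable(1+\variable)^{a}$ or $1+\variable^{b}(1+\variable)^{c}$, so they have nonnegative coefficients. Their product $\generatingFunctionC{}{}$ therefore satisfies $\generatingFunctionC{}{} \polyGreater 0$, and we get
\begin{equation*}
	\generatingFunction{\graphIOf{\cycleLengthVector}\big|_{\cycleLength{i}\mapsto t}}{} - \generatingFunction{\graphIOf{\cycleLengthVector}\big|_{\cycleLength{i}\mapsto t'}}{} = \generatingFunctionC{}{}\cdot\pars*{\variable^{t}(1+\variable)^{\partitionElement-t} - \variable^{t'}(1+\variable)^{\partitionElement-t'}}\enspace.
\end{equation*}
By the remark in the preliminaries on $\positiveProportional$, it then suffices to show that the bracket is $\polyGreater 0$.

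For the bracket, I first treat the case of a single step $t' = t+1$. Factoring out $\variable^t(1+\variable)^{\partitionElement-t-1}$ gives
\begin{equation*}
	\variable^{t}(1+\variable)^{\partitionElement-t} - \variable^{t+1}(1+\variable)^{\partitionElement-t-1} = \variable^{t}(1+\variable)^{\partitionElement-t-1}\pars*{(1+\variable)-\variable} = \variable^{t}(1+\variable)^{\partitionElement-t-1} \polyGreater 0\enspace,
\end{equation*}
which is valid since $t+1 \le \partitionElement$. The general case $t<t'$ follows by telescoping these steps, using transitivity of $\polySmaller$. This establishes the coordinate-wise antitonicity. The chain $\generatingFunction{\graphIOf{(1,\dotsc,1)}}{} \polyGreater \generatingFunction{\graphIOf{\cycleLengthVector}}{} \polyGreater \generatingFunction{\graphIOf{(\cycleRootedTreeSize{1},\dotsc)}}{}$ then follows by moving one coordinate at a time from $\cycleLengthVector$ down to $\mathbf 1$, respectively up to the top element, and applying antitonicity at each step.

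For the edge count, I count edges component by component. A star $\starGraph{\partitionElement-1}$ has $\partitionElement-1$ edges, and these are the same in every $\graphIOf{\cycleLengthVector}$. For $\cycleLength{}\ge 2$, the component $\concentratedCycleRootedStar{\cycleLength{}}{\partitionElement-\cycleLength{}}$ has $\partitionElement-\cycleLength{}$ leaf edges plus $\cycleLength{}$ cycle edges, so $\partitionElement$ edges in total. For $\cycleLength{}=1$, the component is identified with $\starGraph{\partitionElement-1}$ via \zcref[S]{rmk:star-and-concentrated-cycle-rooted-star-triviality}, since the self-loop is dropped, and so it has $\partitionElement-1$ edges. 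In the baseline, every $\cycleLength{i} = \cycleRootedTreeSize{i}\ge 2$ because a genuine cycle in a loopless digraph has length at least $2$, so each cycle component carries $\cycleRootedTreeSize{i}$ edges. Relative to the baseline, exactly one edge is lost for each $i$ with $\cycleLength{i}=1$. The number of such $i$ is $\numberOfCycleRootedTree$ minus the number of nonzero coordinates of $\cycleLengthVector-\mathbf 1$, which is $\numberOfCycleRootedTree - \hammingWeight{\cycleLengthVector-\mathbf 1}$.

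The only delicate point is this bookkeeping at $\cycleLength{}=1$, namely making sure the identification with stars is applied consistently. The polynomial inequality itself is a one-line factorization.
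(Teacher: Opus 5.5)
Your proof is correct, but for the antitonicity it takes a different route from the paper. The paper gets each single-coordinate step from the cycle-extension lemma (\zcref[S]{lma:leaf-to-cycle}) applied to concentrated cycle-rooted stars: extending the cycle of $S_{t,d-t}$ at one of its leaves gives $S_{t+1,d-t-1}$. That lemma in turn rests on the inductive leaf-removal lemma. You instead read the step directly off the closed form, and obtain the exact gap $C\cdot x^{t}(1+x)^{d-t-1}$, where $C\succeq 0$ is the product of the untouched factors.

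The paper's route costs one line, because it reuses a lemma valid for arbitrary cycle-rooted trees that it needs anyway for the lower bound in \zcref[S]{thm:upper-bound-extremal}. Yours is self-contained, quantifies the gap, and handles the step $t=1\to 2$ uniformly. Invoking the cycle-extension lemma at that step tacitly relies on identifying $S_{1,s}$ with the star $S_s$, since a cycle of length one would be a self-loop.

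You could also skip the telescoping: $x^{t}(1+x)^{d-t}-x^{t'}(1+x)^{d-t'} = x^{t}(1+x)^{d-t'}\bigl((1+x)^{t'-t}-x^{t'-t}\bigr)$, and the last factor visibly has nonnegative coefficients. Your edge count matches the paper's. Your remark that every cycle-rooted component has at least two vertices, so every baseline component is a genuine cycle, makes explicit a point the paper leaves implicit.
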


\begin{proof}
	Single-coordinate decrement follows from \zcref[S]{lma:leaf-to-cycle} applied to cycle-rooted stars.
	Then, noticing that any $\cycleLengthVector$ can be reached with a finite number of single-coordinate decrements, we obtain the bound by transitivity.
	As $\cycleLengthSpace$ is a product of totally ordered sets and $\generatingFunction{\graphIOf{\cdot}}{}$ is antitone in each coordinate, it is antitone on the product order, giving the bound at the two corners.
	For the edge count: a component with $\cycleLength{i} \geq 2$ is a genuine cycle, contributing $\cycleRootedTreeSize{i}$ edges; a component with $\cycleLength{i} = 1$ is (by \zcref[S]{rmk:star-and-concentrated-cycle-rooted-star-triviality}) a star, contributing $\cycleRootedTreeSize{i}-1$ edges, one fewer than the baseline. Summing over $i$ gives the stated formula.
	The generating function is obtained through \zcref[S]{cly:generating-function-vertex-rooted-star,cly:generating-function-concentrated-cycle-rooted-star}.
\end{proof}

Instead of proving the main theorem directly, we focus on two local moves and show that they only increase the successor-closed counting polynomial.
\begin{definition}
    Let $\treeRootedAt{}$ be a vertex-rooted tree, and let $\vertexV$ be a non-root vertex with parent $\vertexW$.
    The \defemph{sibling lift at $\vertexV$}, denoted by $\siblingsLift{{\treeRootedAt{}}}{\vertexV}$, is obtained by moving the children of $\vertexV$ from $\vertexV$ to its parent $\vertexW$: for every child $\vertexU$ of $\vertexV$, the edge $\connect{\vertexU}{\vertexV}$ is replaced by $\connect{\vertexU}{\vertexW}$.
\end{definition}
\noindent We depict the sibling lift operator in \zcref[S]{fig:sibling-lift}.

\begin{figure}[t]
	\centering
	\begin{subfigure}[b]{0.42\textwidth}
		\centering
		\begin{tikzpicture}[>=stealth]
			\siblingLiftCommon
			\draw[->] (u1) -- (v);
			\draw[->] (u2) -- (v);
			\draw[->] (u3) -- (v);
		\end{tikzpicture}
		\caption{Before: $\treeRootedAt{}$}
	\end{subfigure}
	\hfill
	\begin{subfigure}[b]{0.42\textwidth}
		\centering
		\begin{tikzpicture}[>=stealth]
			\siblingLiftCommon
			\draw[->] (u1) -- (w);
			\draw[->] (u2) -- (w);
			\draw[->] (u3) -- (w);
		\end{tikzpicture}
		\caption{After: $\siblingsLift{\treeRootedAt{}}{\vertexV}$}
	\end{subfigure}
	\caption{The sibling lift at $\vertexV$ (in blue). Its children are
	moved from $\vertexV$ to its parent $\vertexW$ (in orange), so that
	they become siblings of $\vertexV$ under $\vertexW$.}
	\label{fig:sibling-lift}
\end{figure}

\begin{lemma}[Properties of the siblings lift operator]
	The sibling lift operator has the following properties:
	\begin{enumerate}
		\item $\siblingsLift{}{}$ preserves both the number of vertices and edges;
		\item $\siblingsLift{}{}$ preserves functionality; and
		\item $\siblingsLift{}{}$ is inflationary with respect to the coefficient-wise order:
		\begin{equation}\label{equ:lift-monotonicity}
			\generatingFunction{\siblingsLift{{\treeRootedAt{}}}{\vertexV}}{} \polyGreater \generatingFunction{{\treeRootedAt{}}}{}\enspace.
		\end{equation}
	\end{enumerate}
\end{lemma}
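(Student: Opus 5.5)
Items 1 and 2 follow directly from the definition, and item 3 reduces to a local polynomial comparison at the parent $\vertexW$, which then propagates up the tree.

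\textbf{Items 1 and 2.} The sibling lift only redirects edges $\connect{\vertexU}{\vertexV}$ to $\connect{\vertexU}{\vertexW}$. No vertex is added or removed, and edges are replaced one-for-one, so the numbers of vertices and edges are preserved. Each moved child $\vertexU$ still has out-degree exactly one, and no other out-degree changes, so functionality is preserved. Acyclicity is kept because $\vertexW$ is not a descendant of $\vertexU$. Hence the result is again a vertex-rooted tree with the same root.

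\textbf{Item 3, local step at $\vertexW$.} Let $\vertexU_1,\dotsc,\vertexU_q$ be the children of $\vertexV$ and set $P \defeq \prod_{j} \generatingFunction{\treeRootedAt{\vertexU_j}}{\variable}$. This equals $1$ if $\vertexV$ is a leaf, in which case the lift is the identity and there is nothing to prove. Let $\alternativeCountingPolynomialW{\variable}$ be the product of the generating functions of the other children of $\vertexW$, so $\alternativeCountingPolynomialW{} \polyGreater 1$. By \zcref[S]{thm:successor-closed-counting-polynomial-for-functional-digraph}, before the lift
\begin{equation*}
\generatingFunction{\treeRootedAt{\vertexW}}{} = 1 + \variable \alternativeCountingPolynomialW{\variable}\pars*{1 + \variable P},
\end{equation*}
and after the lift, where $\vertexV$ becomes a leaf and the subtrees at the $\vertexU_j$ are unchanged,
\begin{equation*}
\generatingFunction{\widehat{\treeRootedAt{\vertexW}}}{} = 1 + \variable \alternativeCountingPolynomialW{\variable}\pars*{1+\variable} P.
\end{equation*}
The difference is $\variable \alternativeCountingPolynomialW{\variable}\pars*{P + \variable P - 1 - \variable P} = \variable \alternativeCountingPolynomialW{\variable}\pars*{P - 1}$. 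This is $\polyGreater 0$ because each factor of $P$ has the form $1 + \variable(\cdots)$ with nonnegative coefficients, so $P \polyGreater 1$.

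\textbf{Item 3, propagation to the root.} The remaining step is to push this local inequality up to the root. If $\vertexW$ is the root we are done. Otherwise, I will argue by induction along the path from $\vertexW$ to the root. At each ancestor $a$, the generating function is $1 + \variable \cdot (\text{product of nonnegative-coefficient polynomials})$, and exactly one factor in that product changes, from $A$ to $A'$ with $A' \polyGreater A$. Then
\begin{equation*}
1 + \variable C A' - \pars*{1 + \variable C A} = \variable C \pars*{A' - A} \polyGreater 0,
\end{equation*}
where $C \polyGreater 0$ is the product of the unchanged factors; this is exactly the $\positiveProportional$ relation. Applying this at each ancestor in turn yields $\generatingFunction{\siblingsLift{\treeRootedAt{}}{\vertexV}}{} \polyGreater \generatingFunction{\treeRootedAt{}}{}$.

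I expect no step to be a real obstacle. The one point needing care is checking that the other children of $\vertexW$ and the subtrees below the $\vertexU_j$ are untouched by the lift, so that the factors $\alternativeCountingPolynomialW{}$ and $P$ are identical before and after.
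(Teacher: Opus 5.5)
Your proposal is correct and takes essentially the same route as the paper. You do the same local computation at the parent $w$ and obtain the difference $x\,B_w(x)\,(P-1)\succeq 0$, where your $P$ is the paper's $B_u$. The only difference is that you prove the propagation from $T_w$ up to the root by induction along the ancestor path, whereas the paper simply asserts it from the recursive formula.
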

\begin{proof}
	The first two properties follow directly from the definition of the operator; thus, we only need to demonstrate the inflationary property of the sibling lift.
	Fix $\vertexV$ arbitrarily, $\vertexW \defeq {\treeRootedAt{}}(\vertexV)$ be its unique successor and $\vertexU_1, \dotsc, \vertexU_q$ be its predecessors.
	We also define
	\begin{equation*}
		\widehat{\treeRootedAt{}} \defeq \siblingsLift{{\treeRootedAt{}}}{\vertexV}\enspace,
	\end{equation*}
	and for every subtree $\treeRootedAt{\vertexU}$ of $\treeRootedAt{}$ rooted at $\vertexU$, we let $\widehat{\treeRootedAt{\vertexU}}$ be the subtree of $\widehat{\treeRootedAt{}}$ rooted at $\vertexU$.
	Furthermore, let $\alternativeCountingPolynomialU{\variable} \defeq \prod_{i = 1}^{j} \generatingFunction{{\treeRootedAt{\vertexU_i}}}{\variable} \polyGreater 1$ and $\alternativeCountingPolynomialW{\variable} \defeq \prod_{\vertexT \in \rev{\graphG}(\vertexW) \setminus \set*{\vertexV}} \generatingFunction{{\treeRootedAt{\vertexT}}}{\variable} \polyGreater 1$ be polynomials which are not affected by the lift.

	It follows from the recursive formulation of \zcref[S]{thm:successor-closed-counting-polynomial-for-functional-digraph} that since the change to $\treeRootedAt{}$ only occurs in the subtree $\treeRootedAt{\vertexW}$, if the new subtree has a coefficient-wise larger successor-closed counting polynomial, i.e., if their difference $\differenceAtW{}$ is coefficient-wise positive, then $\generatingFunction{\widehat{\treeRootedAt{}}}{} - \generatingFunction{{\treeRootedAt{}}}{}$ is coefficient-wise positive.
	From \zcref[S]{thm:successor-closed-counting-polynomial-for-functional-digraph}, we get the following two polynomials:
	\begin{align*}
		\generatingFunction{\widehat{\treeRootedAt{\vertexW}}}{\variable} &= 1 + \variable \generatingFunction{\widehat{\treeRootedAt{\vertexV}}}{\variable} \alternativeCountingPolynomialU{\variable} \alternativeCountingPolynomialW{\variable} \\
		&= 1 + \variable (1 + \variable) \alternativeCountingPolynomialU{\variable} \alternativeCountingPolynomialW{\variable}\enspace, \\
		\generatingFunction{{\treeRootedAt{\vertexW}}}{\variable} &= 1 + \variable \generatingFunction{{\treeRootedAt{\vertexV}}}{\variable} \alternativeCountingPolynomialW{\variable} \\
		&=1 + \variable \left(1 + \variable \alternativeCountingPolynomialU{\variable} \right) \alternativeCountingPolynomialW{\variable}\enspace,
	\end{align*}
	and their difference is 
	\begin{equation*}
		\differenceAtW{\variable} \defeq \variable \alternativeCountingPolynomialW{\variable} \left(\alternativeCountingPolynomialU{\variable} - 1\right)\polyGreater 0\enspace,
	\end{equation*}
	which implies \zcref[S]{equ:lift-monotonicity} and concludes the proof of this lemma.
\end{proof}

We now introduce the \emph{move lift operator}.
\begin{definition}
    Let $\cycleRootedtree{}$ be a cycle-rooted tree, and let $\vertexV$ be a vertex of the cycle with parent $\vertexW$.
    The \defemph{move lift at $\vertexV$}, denoted by $\moveLift{{\cycleRootedtree{}}}{\vertexV}$, is obtained by moving the children of $\vertexV$ not in the cycle from $\vertexV$ to its parent $\vertexW$: for every child $\vertexU$ of $\vertexV$ not in the cycle, the edge $\connect{\vertexU}{\vertexV}$ is replaced by $\connect{\vertexU}{\vertexW}$.
\end{definition}
\begin{remark}
    When the cycle-rooted tree is a star $\cycleRootedStar{\treeInternalSizes{}}$, the resulting graph is $\cycleRootedStar{\treeInternalSizes{}'}$, where $\treeInternalSizes{}'$ is obtained from $\treeInternalSizes{}$ by replacing the sizes associated with $\vertexV$ and $\vertexW$ by $\treeInternalSizes{}'_{\vertexV}=0$ and $\treeInternalSizes{}'_{\vertexW}=\treeInternalSizes{}_{\vertexV}+\treeInternalSizes{}_{\vertexW}$, respectively, and leaving all other sizes unchanged.
	We depict this case in \zcref[S]{fig:move-lift}.
\end{remark}

\begin{figure}[t]
	\centering
	\begin{subfigure}[b]{0.42\textwidth}
		\centering
		\begin{tikzpicture}[>=stealth]
			\moveLiftBefore
		\end{tikzpicture}
		\caption{Before: $\cycleRootedStar{\treeInternalSizes{}}$}
	\end{subfigure}
	\hfill
	\begin{subfigure}[b]{0.42\textwidth}
		\centering
		\begin{tikzpicture}[>=stealth]
			\moveLiftAfter
		\end{tikzpicture}
		\caption{After: $\moveLift{\cycleRootedStar{\treeInternalSizes{}}}{\vertexV}$}
	\end{subfigure}
	\caption{The move lift at the cycle vertex $\vertexV$ (in blue). The
	children of $\vertexV$ not on the cycle are moved to its cycle-parent
	$\vertexW$ (in orange); the cycle itself is unchanged, and $\vertexV$
	is left with no non-cycle children.}
	\label{fig:move-lift}
\end{figure}

\begin{lemma}[Properties of the move lift operator]
	The move lift operator has the following properties:
	\begin{enumerate}
		\item $\moveLift{}{}$ preserves both the number of vertices and edges;
		\item $\moveLift{}{}$ preserves functionality; and
		\item $\moveLift{}{}$ is \defemph{$\generatingFunction{}{}$-invariant}, meaning that $\generatingFunction{\moveLift{{\cycleRootedtree{}}}{\vertexV}}{} = \generatingFunction{{\cycleRootedtree{}}}{}$. 
	\end{enumerate}
\end{lemma}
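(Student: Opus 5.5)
The first two properties follow directly from the definition of the move lift. Every edge $\connect{\vertexU}{\vertexV}$ with $\vertexU$ off the cycle is replaced by exactly one edge $\connect{\vertexU}{\vertexW}$. The vertex set is untouched, so both counts are preserved. Each moved vertex $\vertexU$ still has out-degree exactly one, and no other out-degree changes, so functionality is preserved. The resulting graph is still a cycle-rooted tree with the same cycle. The subtrees hanging below the moved children are carried along unchanged.

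For $\generatingFunction{}{}$-invariance, I would use the closed formula \zcref[S]{equ:successor-closed-counting-polynomial-for-cycle-rooted-tree} from \zcref[S]{thm:successor-closed-counting-polynomial-for-functional-digraph}. Let the cycle be $\vertexU_1, \dotsc, \vertexU_{\cycleLength{}}$, with $\vertexV = \vertexU_j$ and $\vertexW = \vertexU_{j+1}$ (indices mod $\cycleLength{}$). Only the two factors for $\vertexV$ and $\vertexW$ change. Let $\alternativeCountingPolynomialU{\variable}$ be the product of the generating functions of the non-cycle children subtrees of $\vertexV$. Let $\alternativeCountingPolynomialW{\variable}$ be the analogous product for $\vertexW$. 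By \zcref[S]{equ:successor-closed-counting-polynomial-for-tree}, before the move
\begin{equation*}
	\pars*{\generatingFunction{\treeRootedAt{\vertexV}}{} - 1}\pars*{\generatingFunction{\treeRootedAt{\vertexW}}{} - 1} = \variable \alternativeCountingPolynomialU{\variable} \cdot \variable \alternativeCountingPolynomialW{\variable}\enspace.
\end{equation*}
After the move, $\vertexV$ becomes a bare vertex, so its factor is $(1+\variable)-1 = \variable$. The factor of $\vertexW$ becomes $\variable \alternativeCountingPolynomialU{\variable}\alternativeCountingPolynomialW{\variable}$. The product is again $\variable^2 \alternativeCountingPolynomialU{\variable}\alternativeCountingPolynomialW{\variable}$, so the two polynomials coincide.

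The only delicate step is bookkeeping for degenerate cases. The empty product convention must cover a vertex $\vertexW$ with no non-cycle children, so that $\alternativeCountingPolynomialW{} = 1$ and $\treeRootedAt{\vertexW}$ is a leaf with polynomial $1+\variable$. This is consistent with the leaf case of \zcref[S]{equ:successor-closed-counting-polynomial-for-tree}.

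The cycle length $\cycleLength{} = 1$ must also be checked. There $\vertexW = \vertexV$, the operation is the identity, and invariance is trivial.

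Finally, the cycle length $\cycleLength{} = 2$ must be checked, where $\vertexW$ is simultaneously the successor and the predecessor of $\vertexV$. The computation above still applies, since it only uses the two distinct factors.

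A combinatorial sanity check is also available. Any nonempty successor-closed set contains the whole cycle, so membership of a non-cycle vertex $\vertexU$ requires only that its successor be included. Both $\vertexV$ and $\vertexW$ are always present in such a set, so redirecting $\vertexU$ from $\vertexV$ to $\vertexW$ leaves the family of successor-closed subsets literally unchanged. I would present this bijective argument as the main proof and the formula as confirmation.
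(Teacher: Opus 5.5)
Your proof is correct. The generating-function computation matches the paper's proof almost line for line. The paper also isolates the two factors $(A_{T_v}-1)(A_{T_w}-1)$ of the cycle-rooted formula and writes them as $xB_u\cdot xB_w$ before the lift and $x\cdot xB_uB_w$ after. It then concludes that the difference is a nonnegative polynomial multiple of $0$.

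Where you genuinely depart from the paper is in making the combinatorial argument the main proof. Write $\widehat{T}$ for the lifted graph. Both $T$ and $\widehat{T}$ are cycle-rooted trees on the same cycle, so every nonempty successor-closed set in either graph contains the whole cycle, hence both $v$ and $w$. The constraint from each redirected edge is therefore automatically satisfied in both graphs, and all other constraints are identical. So $\mathcal{U}(T)=\mathcal{U}(\widehat{T})$ as families of sets.

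This route buys several things:
\begin{itemize}
\item It gives a strictly stronger conclusion: the families themselves are equal, not just their counting polynomials.
\item It does not need the recursive counting formula, only the elementary fact that forward orbits reach the cycle.
\item It needs no empty-product bookkeeping, and it handles $\ell\in\{1,2\}$ without comment.
\item It generalizes immediately: any edge from a non-cycle vertex into the cycle can be re-targeted to any cycle vertex, not just the cycle-successor, without changing anything.
\end{itemize}

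The paper's algebraic route keeps the argument uniform with the sibling-lift and cycle-extension lemmas. There the families genuinely change, so only a coefficient-wise comparison of polynomials via $\propto_+$ is available.
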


\begin{proof}
	The first two properties follow directly from the definition of the operator.
	For the third property, we want to prove that
	\begin{equation}\label{equ:lift-equality}
		\generatingFunction{\moveLift{{\cycleRootedtree{}}}{\vertexV}}{} = \generatingFunction{{\cycleRootedtree{}}}{}\enspace,
	\end{equation}
	for every vertex $\vertexV$ in the cycle.

	Let $\vertexV$ be any vertex in the cycle, $\vertexW \defeq {\cycleRootedtree{}}(\vertexV)$ be its unique successor and $\vertexU_1, \dotsc, \vertexU_q$ be its predecessors in $\vertexRootedtree{\vertexV}$, that is, not in the cycle.
	As before, we also define the polynomials $\alternativeCountingPolynomialU{\variable} \defeq \prod_{i = 1}^{j} \generatingFunction{{\treeRootedAt{\vertexU_i}}}{\variable} \polyGreater 1$ and $\alternativeCountingPolynomialW{\variable} \defeq \prod_{\vertexT \in \rev{\graphG}(\vertexW) \setminus \set*{\vertexV}} \generatingFunction{{\treeRootedAt{\vertexT}}}{\variable} \polyGreater 1$ which are also not affected by this lift.
	Finally, we let $\widehat{\cycleRootedtree{}} \defeq \moveLift{{\cycleRootedtree{}}}{\vertexV}$ be the transformed graph and define the difference polynomial $\differenceCycleRootedTree{} \defeq \generatingFunction{\widehat{\cycleRootedtree{}}}{} - \generatingFunction{{\cycleRootedtree{}}}{}$.

	Note that $\moveLift{}{}$ only affects the subtrees rooted at $\vertexV$ and $\vertexW$, and therefore, \zcref[S]{thm:successor-closed-counting-polynomial-for-functional-digraph} implies that
	\begin{equation}\label{equ:difference-proportion}
		\differenceCycleRootedTree{} \positiveProportional
		\pars*{\generatingFunction{\widehat{\treeRootedAt{\vertexV}}}{\variable} - 1} \cdot \pars*{\generatingFunction{\widehat{\treeRootedAt{\vertexW}}}{\variable} - 1}
		-
		\pars*{\generatingFunction{{\treeRootedAt{\vertexV}}}{\variable} - 1} \cdot \pars*{\generatingFunction{{\treeRootedAt{\vertexW}}}{\variable} - 1}
	\end{equation}
	Before the lifting, we have
	\begin{align*}
		\generatingFunction{{\treeRootedAt{\vertexV}}}{\variable} &= 1 + \variable \alternativeCountingPolynomialU{\variable}\enspace,\\
		\generatingFunction{{\treeRootedAt{\vertexW}}}{\variable} &= 1 + \variable \alternativeCountingPolynomialW{\variable}\enspace,
	\end{align*}
	and after,
	\begin{align*}
		\generatingFunction{\widehat{\treeRootedAt{\vertexV}}}{\variable} &=1 + \variable \enspace,\\
		\generatingFunction{\widehat{\treeRootedAt{\vertexW}}}{\variable} &= 1 + \variable \alternativeCountingPolynomialW{\variable} \alternativeCountingPolynomialU{\variable}\enspace.
	\end{align*}
	Injecting these quantities into \zcref[S]{equ:difference-proportion}, we obtain $\differenceCycleRootedTree{} \positiveProportional 0$, which implies $\differenceCycleRootedTree{} = 0$.
\end{proof}

We now have all the necessary tools to prove the two main theorems of this section.

\begin{proof}[{Proof of {\zcref[S]{thm:upper-bound-extremal}}}]
	We start by proving the upper bound, then the lower bounds, and we conclude with the generating functions. 

	\paragraph{Upper bound}
	First, note that any vertex-rooted tree ${\vertexRootedtree{}}$ can be transformed into ${\cycleRootedStar{\card*{\vertexRootedtree{}} - 1}}{}$ through a finite number of sibling lifts.
	The inflationary property of $\siblingsLift{}{}$ ensures $\generatingFunction{{\vertexRootedtree{}}}{} \polySmaller \generatingFunction{{\cycleRootedStar{\card*{\vertexRootedtree{}} - 1}}{}}{}$.
	Similarly, any cycle-rooted tree ${\cycleRootedtree{}}$, with subtrees of cardinalities $\vertexRootedTreeSize{1}, \dotsc, \vertexRootedTreeSize{\cycleLength{}}$, can be transformed into a cycle-rooted star ${\cycleRootedStar{\vertexRootedTreeSizes - \mathbf{1}}}$ with a larger generating function by applying the sibling lift a finite number of times to each of its vertex-rooted trees.
	Then, we obtain ${\concentratedCycleRootedStar{\cycleLength{}}{\cycleRootedTreeSize{} - \cycleLength{}}}$, where $\cycleRootedTreeSize{} \defeq \sum_{i = 1}^{\cycleLength{}} \vertexRootedTreeSize{i}$, without affecting the generating function, with $\cycleLength{} - 1$ move lifts.
	The characterization of functional digraph (\zcref[S]{pro:functional-graph-as-cycle-rooted-forest}) combined with \zcref[S]{lma:product-rule} implies the inequality $\graphG \polySmaller \graphH$.
	
	\paragraph{Lower bound}
	First, we transform every vertex-rooted tree $\vertexRootedtree{}$ into a path $\pathGraph{\vertexRootedTreeSize{} - 1}$ with a finite number of inverse sibling lifts.
	As before, the inflationary property ensures that $\generatingFunction{\pathGraph{\vertexRootedTreeSize{} - 1}}{} \polySmaller \generatingFunction{\vertexRootedtree{}}{}$.
	For a cycle-rooted tree $\cycleRootedtree{}$, we repeatedly apply the cycle extension to obtain a cycle $\cycle{\cycleRootedTreeSize{}}$ which minimizes the number of successor-closed subsets by \zcref[S]{lma:leaf-to-cycle}.
	Unlike the upper bound, the lower bound reduction (\zcref[S]{lma:leaf-to-cycle}) proceeds directly to the fully concentrated cycle, without requiring an intermediate stopping point analogous to $\cycleLength{} = 2$ above.
	
	\paragraph{Successor-closed generating functions}
	For the generating function of $\graphH$, we obtain it from \zcref[S]{cly:generating-function-vertex-rooted-star,cly:generating-function-concentrated-cycle-rooted-star} and \zcref[S]{lma:product-rule}.
	For $\graphF$, we use \zcref[S]{cly:generating-function-cycle,cly:generating-function-path} instead.
\end{proof}

\section{The extremal digraph for a fixed number of edges}
\label{sec:extremal-digraph}

In this section, we derive a lower and an upper bound for any functional digraph $\graphG$ where we only fix its number of vertices $\numberOfVertices$ and its number of edges $\numberOfEdges$.
We derive extremal graphs from the previous section and then optimize over all possible graphs conditioned on the fixed number of edges and vertices.

\begin{theorem}\label{thm:upper-bound-extremal-2}
	Let $\graphG$ be any functional digraph with $\numberOfVertices$ vertices and $\numberOfEdges < \numberOfVertices$ edges.
	Let
	\begin{equation*}
		\graphH \defeq \starGraph{\numberOfEdges} \disjointUnion \pars*{\largeDisjointUnion_{i = 1}^{\numberOfVertices - \numberOfEdges - 1} \starGraph{0}}\enspace.
	\end{equation*}
	Then, the inequality $\generatingFunction{\graphG}{} \polySmaller \generatingFunction{\graphH}{}$ holds, and $\graphH$ has successor-closed counting polynomial
	\begin{equation*}
		\generatingFunction{\graphH}{} = \pars*{1 + \variable}^{\numberOfVertices - \numberOfEdges - 1} \pars*{1 + \variable \pars*{1 + \variable}^{\numberOfEdges}}\enspace.
	\end{equation*}
	Moreover, when $\numberOfEdges > 1$, letting
	\begin{equation*}
		\graphF \defeq \cycle{\numberOfEdges} \disjointUnion \pars*{\largeDisjointUnion_{i = 1}^{\numberOfVertices - \numberOfEdges} \starGraph{0}}\enspace,
	\end{equation*}
	the inequality $\generatingFunction{\graphF}{} \polySmaller \generatingFunction{\graphG}{}$ holds, and $\graphF$ has successor-closed counting polynomial
	\begin{equation*}
		\generatingFunction{\graphF}{} = \pars*{1 + \variable}^{\numberOfVertices - \numberOfEdges}\pars*{1 + \variable^{\numberOfEdges}}\enspace.
	\end{equation*}
\end{theorem}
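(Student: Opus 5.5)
The plan is to reduce both bounds to the component-wise extremal graphs of \zcref[S]{thm:upper-bound-extremal}, and then merge components pairwise. Each merge preserves the total number of vertices and edges and moves the counting polynomial in the right direction for the coefficient-wise order. Transitivity and \zcref[S]{lma:product-rule} then let us merge one pair of components at a time while keeping every other factor fixed. The closed forms for $\generatingFunction{\graphH}{}$ and $\generatingFunction{\graphF}{}$ come directly from \zcref[S]{cly:generating-function-vertex-rooted-star,cly:generating-function-cycle} together with the product rule.

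\textbf{Upper bound.} First apply \zcref[S]{thm:upper-bound-extremal}, so that $\generatingFunction{\graphG}{}$ is dominated by $\generatingFunction{}{}$ of a disjoint union of stars $\starGraph{b}$ and concentrated cycle-rooted stars $\concentratedCycleRootedStar{\cycleLength{}}{s}$. By \zcref[S]{prop:cycle-length-space-monotone}, restricted to $\cycleLength{}\ge 2$ so that the edge count is kept, we may take every genuine cycle to have length $2$. Since $\numberOfEdges<\numberOfVertices$, at least one star component exists. We then use two merge inequalities, each a short expansion.
\begin{enumerate}
\item Absorb a cycle component into a star: $\generatingFunction{\concentratedCycleRootedStar{2}{s}\disjointUnion\starGraph{b}}{}\polySmaller\generatingFunction{\starGraph{s+b+2}}{}$. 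Expanding $x(1+x)^{s+b+2}=x(1+x)^{s+b}(1+2x+x^2)$, each of the three terms $x(1+x)^b$, $x^2(1+x)^s$ and $x^3(1+x)^{s+b}$ of the left side is dominated by the corresponding term on the right.
\item Merge two stars: $\generatingFunction{\starGraph{a}\disjointUnion\starGraph{b}}{}\polySmaller\generatingFunction{\starGraph{a+b}\disjointUnion\starGraph{0}}{}$. Here the difference is exactly $x\bigl((1+x)^a-1\bigr)\bigl((1+x)^b-1\bigr)\polyGreater 0$.
\end{enumerate}
Applying the first inequality until no cycle remains, and then the second until at most one star has leaves, yields $\graphH$.

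\textbf{Lower bound.} By \zcref[S]{thm:upper-bound-extremal}, $\generatingFunction{\graphG}{}$ dominates $\generatingFunction{}{}$ of a union of paths $\pathGraph{p}$ and cycles $\cycle{\cdot}$ with the same vertex and edge counts. We convert this union into a single $\cycle{\numberOfEdges}$ plus isolated vertices using four merges.
\begin{enumerate}
\item A path $\pathGraph{p}$ with $p\ge2$ becomes $\cycle{p}\disjointUnion\starGraph{0}$, since $(1+x)(1+x^p)=1+x+x^p+x^{p+1}\polySmaller\sum_{i=0}^{p+1}x^i$.
\item Two cycles combine into one: $(1+x^a)(1+x^b)\polyGreater 1+x^{a+b}$.
\item A cycle absorbs a single-edge path: $(1+x^a)(1+x+x^2)\polyGreater(1+x)(1+x^{a+1})$.
\item Two single-edge paths become $\cycle{2}\disjointUnion 2\starGraph{0}$: $(1+x+x^2)^2\polyGreater(1+x)^2(1+x^2)$.
\end{enumerate}
Paths $\pathGraph{0}$ are already isolated vertices. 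Iterating these moves leaves one cycle of length $\numberOfEdges$ together with isolated vertices, i.e.\ $\graphF$.

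\textbf{Main obstacle.} The delicate step is the bookkeeping needed to keep the edge count exactly $\numberOfEdges$. In the upper bound we must not collapse cycles to stars, which would lose edges and push the polynomial in the wrong direction; this is why cycles are absorbed into a star instead. In the lower bound the hypothesis $\numberOfEdges>1$ is what is needed, because a lone single-edge path with no other edges cannot become a cycle of length $1$. In that case we must check that some other edge-bearing component is available for the third or fourth merge, and that case is exactly ruled out by $\numberOfEdges\ge2$.
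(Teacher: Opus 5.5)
Your proof is correct and uses the paper's overall structure. You reduce via the component-wise theorem, shrink every cycle to length $2$ in the upper bound, and then apply exchange inequalities that preserve both vertex and edge counts. The differences are in the exchange steps.

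\textbf{Upper bound.} The paper proves $A_{S_{2,a}}A_{S_b}\preceq A_{S_{a+b+2}}$ by induction on $a$; your termwise comparison with $x(1+x)^{s+b}(1+2x+x^2)$ settles it in one line. The paper then shifts one leaf at a time from a smaller star to a larger one, which gives the finer fact that each such transfer increases the polynomial. You instead merge two stars outright using the exact identity $A_{S_{a+b}}A_{S_0}-A_{S_a}A_{S_b}=x\bigl((1+x)^a-1\bigr)\bigl((1+x)^b-1\bigr)$, which reaches $H$ more directly.

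\textbf{Lower bound.} Here the routes genuinely differ. The paper first merges all paths into one path and only then turns it into a cycle plus an isolated vertex. You convert each path of length at least $2$ immediately and handle single-edge paths with your merges 3 and 4.

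Your version is the more careful one. The paper's argument never deals with a leftover $P_1$ alongside a cycle. For example, with $G=C_2\sqcup P_1$ its claim that only cycles and bare vertices remain is false, and your merge 3 is exactly the step needed there.

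The paper's path-merging lemma also has two problems as stated. It requires $a,b>1$, and it compares $A_{P_a}A_{P_b}$ with $A_{P_{a+b}}$, which drops a vertex. Preserving the vertex count would need the stronger $A_{P_a}A_{P_b}\succeq(1+x)A_{P_{a+b}}$, which is true but not what is proved. Your argument avoids path merging entirely and uses $k\ge 2$ only to pair two $P_1$'s into a $C_2$ when no cycle is present. With that, your termination argument is complete.
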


\begin{remark}
	When $\numberOfEdges = 1$, the graph $\graphG$ is unique up to isomorphism: exactly one vertex is active, so $\graphG \cong \starGraph{1} \disjointUnion \pars*{\largeDisjointUnion_{i=1}^{\numberOfVertices-2}\starGraph{0}}$, and its generating function is simply $\pars*{1 + \variable + \variable^2} \pars*{1 + \variable}^{\numberOfVertices - 2}$.
	Similarly, when $\numberOfEdges = 0$, $\graphG$ is forced to be $\numberOfVertices$ isolated vertices, with generating function $\pars*{1 + \variable}^\numberOfVertices$.
\end{remark}

The following two exchange lemmas break a cycle while preserving the number of edges and vertices.
\begin{lemma}\label{lma:cycle-to-vertex-rooted-tree}
	For every integer $a$ and $b$,
	\begin{equation*}
		\generatingFunction{\starGraph{a + b + 2}}{}
		\polyGreater
		\generatingFunction{\concentratedCycleRootedStar{2}{a}}{} \cdot \generatingFunction{\starGraph{b}}{}\enspace.
	\end{equation*}
	Note that the number of vertices and edges is preserved.
\end{lemma}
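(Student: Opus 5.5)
The plan is to compute all three generating functions explicitly with the closed forms already available, and to show that the difference is coefficient-wise nonnegative by splitting it into two groups of terms, each of which is visibly a product of polynomials with nonnegative coefficients. Throughout, $a, b \geq 0$ are integers, since they are leaf counts.

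First, \zcref[S]{cly:generating-function-vertex-rooted-star} gives $\generatingFunction{\starGraph{a+b+2}}{\variable} = 1 + \variable(1+\variable)^{a+b+2}$ and $\generatingFunction{\starGraph{b}}{\variable} = 1 + \variable(1+\variable)^{b}$. Next, \zcref[S]{cly:generating-function-concentrated-cycle-rooted-star} gives $\generatingFunction{\concentratedCycleRootedStar{2}{a}}{\variable} = 1 + \variable^2(1+\variable)^a$.

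Expanding the product on the right-hand side gives
\begin{equation*}
	1 + \variable^2(1+\variable)^a + \variable(1+\variable)^b + \variable^3(1+\variable)^{a+b}\enspace.
\end{equation*}
On the left, I will write $(1+\variable)^{a+b+2} = (1+\variable)^{a+b}(1 + 2\variable + \variable^2)$, so that
\begin{equation*}
	\variable(1+\variable)^{a+b+2} = \variable(1+\variable)^{a+b} + 2\variable^2(1+\variable)^{a+b} + \variable^3(1+\variable)^{a+b}\enspace.
\end{equation*}

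Subtracting, the constant $1$ and the $\variable^3(1+\variable)^{a+b}$ term cancel. I will then group what remains as
\begin{equation*}
	\variable(1+\variable)^b\pars*{(1+\variable)^a - 1} + \variable^2(1+\variable)^a\pars*{2(1+\variable)^b - 1}\enspace.
\end{equation*}
Both factors $(1+\variable)^a - 1$ and $2(1+\variable)^b - 1$ have nonnegative coefficients. The first is $0$ when $a = 0$. The second has constant term $1$ and all its other coefficients nonnegative. Hence each summand is $\polyGreater 0$, which proves the inequality.

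The only step needing any care is choosing this grouping. The subtracted term $\variable^2(1+\variable)^a$ must be absorbed by the $2\variable^2(1+\variable)^{a+b}$ term rather than by the $\variable(1+\variable)^{a+b}$ term. Once the grouping is fixed, the argument is routine.

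Finally, I will check the parameter counts. The left graph has $a+b+3$ vertices and $a+b+2$ edges. The right graph has $(a+2)+(b+1)$ vertices and $(a+2)+b$ edges. These agree, which gives the remark in the statement.
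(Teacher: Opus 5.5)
Your proof is correct, and it reaches the result by a different route than the paper. The paper fixes $b$ and inducts on $a$. Its base case computes $\Delta(0,b) = x^2\bigl(2(1+x)^b - 1\bigr)$, which is exactly your second summand at $a=0$ (your first summand vanishes there). Its inductive step shows $\Delta(a,b) - \Delta(a-1,b) = x^2(1+x)^{a-1}\bigl[(1+x)^{b+2} - x - x^2(1+x)^b\bigr] \succeq 0$. Your grouping
\[
\Delta(a,b) = x(1+x)^b\bigl((1+x)^a - 1\bigr) + x^2(1+x)^a\bigl(2(1+x)^b - 1\bigr)
\]
replaces the induction with a single explicit nonnegative certificate. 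It is shorter, and it also shows at once that $\Delta(a,b)$ is nondecreasing in $a$, which is the only extra information the paper's inductive framing provides. It also sidesteps a slip in the paper's inductive step. There the bracket is rewritten as $x\bigl((1+x)^b(1+x+x^2) - 1\bigr) + x$, which is not an identity: the bracket equals $(1+x)^b(1+2x) - x$, e.g.\ $1+x$ versus $x+x^2+x^3$ at $b=0$. The nonnegativity conclusion there still holds, but your decomposition avoids that computation entirely.
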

\begin{proof}
	From \zcref[S]{cly:generating-function-vertex-rooted-star,cly:generating-function-concentrated-cycle-rooted-star}, we obtain that their difference is
	\begin{align*}
		\Delta(a, b) &\defeq 1 + \variable\pars*{1 + \variable}^{a + b + 2}
		-
		\pars*{1 + \variable^2 \pars*{1 + \variable}^{a}} \pars*{1 + \variable \pars*{1 + \variable}^{b}} \\
		&= \variable\pars*{1+\variable}^{a+b+2} - \variable\pars*{1+\variable}^b - \variable^2\pars*{1+\variable}^a - \variable^3\pars*{1+\variable}^{a+b}\enspace.
	\end{align*}
	Fix the integer $b$ arbitrarily. We show, by induction on $a$, that $\Delta(a,b) \polyGreater 0$.

	\paragraph{Base case ($a = 0$)}
	\begin{align*}
		\Delta(0,b) &= \variable\pars*{1+\variable}^{b+2} - \variable\pars*{1+\variable}^b - \variable^2 - \variable^3\pars*{1+\variable}^b \\
		&= \variable\pars*{1+\variable}^b\left[\pars*{1+\variable}^2 - 1 - \variable^2\right] - \variable^2 \\
		&= 2\variable^2\pars*{1+\variable}^b - \variable^2 \\
		&= \variable^2\left[2\pars*{1+\variable}^b - 1\right] \\
		&\polyGreater 0\enspace,
	\end{align*}
	since $2\pars*{1+\variable}^b \polyGreater 1$.

	\paragraph{Inductive step ($a > 0$)}
	First, note that
	\begin{align*}
		\Delta(a,b) - \Delta(a-1,b)
		&= \variable^2\pars*{1+\variable}^{a-1}\left[\pars*{1+\variable}^{b+2} - \variable - \variable^2\pars*{1+\variable}^b\right] \\
		&= \variable^2\pars*{1+\variable}^{a-1}\left[\variable\left(\pars*{1+\variable}^b\pars*{1+\variable+\variable^2} - 1\right) + \variable\right] \\
		&\polyGreater 0\enspace,
	\end{align*}
	since $\pars*{1+\variable}^b\pars*{1+\variable+\variable^2} \polyGreater 1$.
	Then,
	\begin{equation*}
		\Delta(a,b) = \Delta(a,b) - \Delta(a-1,b) + \Delta(a-1,b) \polyGreater \Delta(a-1,b) \polyGreater 0\enspace,
	\end{equation*}
	by the induction hypothesis. This concludes the proof by induction and, therefore, the proof of the lemma.
\end{proof}

\begin{lemma}\label{lma:cycle-to-path}
	For every integer $\pathLength > 1$,
	\begin{equation*}
		\generatingFunction{\pathGraph{\pathLength}}{}
		\polyGreater
		\generatingFunction{\cycle{\pathLength}}{} \cdot \generatingFunction{\starGraph{0}}{}\enspace.
	\end{equation*}
	Note that the number of vertices and edges is preserved.
\end{lemma}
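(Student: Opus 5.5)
The plan is to compute both sides explicitly and compare them monomial by monomial. No induction or local-move argument is needed. The left-hand side comes from \zcref[S]{cly:generating-function-path}:
\begin{equation*}
	\generatingFunction{\pathGraph{\pathLength}}{\variable} = \sum_{i=0}^{\pathLength+1} \variable^i\enspace.
\end{equation*}
For the right-hand side, \zcref[S]{cly:generating-function-cycle} gives $\generatingFunction{\cycle{\pathLength}}{\variable} = 1 + \variable^{\pathLength}$, and $\starGraph{0}$ is a single vertex, so $\generatingFunction{\starGraph{0}}{\variable} = 1 + \variable$. This also follows from \zcref[S]{cly:generating-function-vertex-rooted-star} with $\treeInternalSize{} = 0$. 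By the product rule (\zcref[S]{lma:product-rule}), the right-hand side is the generating function of the disjoint union $\cycle{\pathLength} \disjointUnion \starGraph{0}$, and it expands as
\begin{equation*}
	\pars*{1 + \variable^{\pathLength}}\pars*{1 + \variable} = 1 + \variable + \variable^{\pathLength} + \variable^{\pathLength+1}\enspace.
\end{equation*}

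The key step is to check that these four monomials are distinct, so that every coefficient on the right is either $0$ or $1$. This is exactly where the hypothesis $\pathLength > 1$ is used. Since $\pathLength \geq 2$, the exponents satisfy $0 < 1 < \pathLength < \pathLength + 1$. Each of them lies in $\set*{0, \dotsc, \pathLength+1}$, where the path polynomial has coefficient $1$. Subtracting then gives
\begin{equation*}
	\generatingFunction{\pathGraph{\pathLength}}{\variable} - \generatingFunction{\cycle{\pathLength}}{\variable}\,\generatingFunction{\starGraph{0}}{\variable} = \sum_{i=2}^{\pathLength-1} \variable^i \polyGreater 0\enspace.
\end{equation*}
When $\pathLength = 2$ this sum is empty and the two sides are equal.

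For the remark about preserved counts, I would simply tally them. The path $\pathGraph{\pathLength}$ has $\pathLength+1$ vertices and $\pathLength$ edges. The graph $\cycle{\pathLength} \disjointUnion \starGraph{0}$ has $\pathLength + 1$ vertices and $\pathLength$ edges.

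I do not expect a genuine obstacle. The only point needing care is the role of $\pathLength > 1$. If $\pathLength = 1$, then $\variable^{\pathLength} = \variable$ and the right-hand side would have coefficient $2$ at $\variable$, so the inequality would fail. However, a $1$-cycle is a self-loop, which the paper excludes (or identifies with a vertex), so that case is ruled out anyway.
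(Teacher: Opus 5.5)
Your proposal is correct and follows the paper's own argument. Both expand $\sum_{i=0}^{p+1} x^i - (1+x^p)(1+x)$ using the cycle and path corollaries, and use $p>1$ to ensure that the monomials $1, x, x^p, x^{p+1}$ are distinct. Your explicit difference $\sum_{i=2}^{p-1} x^i$, together with the vertex/edge tally, just spells out the paper's last step.
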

\begin{proof}
	By direct computation and from \zcref[S]{cly:generating-function-cycle,cly:generating-function-path,cly:generating-function-vertex-rooted-star},
	\begin{align*}
		\generatingFunction{\pathGraph{\pathLength}}{}
		-
		\generatingFunction{\cycle{\pathLength}}{} \cdot \generatingFunction{\starGraph{0}}{}
		&=\sum_{i=0}^{\pathLength + 1} \variable^i
		-
		\pars*{1 + \variable^\pathLength} \pars*{1 + \variable} \\
		&=\sum_{i=0}^{\pathLength + 1} \variable^i
		-
		\pars*{1 + \variable + \variable^\pathLength + \variable^{\pathLength + 1}} \\
		&\polyGreater 0\enspace,
	\end{align*}
	since $\pathLength > 1$.
\end{proof}

The following result compares two cycles with the larger merged cycle.
\begin{lemma}\label{lma:two-cycle-to-one-cycle}
	For every integer $a, b > 1$,
	\begin{equation*}
		\generatingFunction{\cycle{a}}{} \cdot \generatingFunction{\cycle{b}}{}
		\polyGreater
		\generatingFunction{\cycle{a + b}}{}\enspace.
	\end{equation*}
	Note that the number of vertices and edges is preserved.
\end{lemma}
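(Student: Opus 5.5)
By \zcref[S]{cly:generating-function-cycle} and \zcref[S]{lma:product-rule}, both sides have closed forms, so I will prove the inequality by a direct expansion, in the same style as \zcref[S]{lma:cycle-to-path}. Note first that $\cycle{a} \disjointUnion \cycle{b}$ and $\cycle{a+b}$ both have $a+b$ vertices and $a+b$ edges, which gives the remark in the statement.

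The difference is
\begin{equation*}
	\generatingFunction{\cycle{a}}{} \cdot \generatingFunction{\cycle{b}}{} - \generatingFunction{\cycle{a+b}}{}
	= \pars*{1 + \variable^{a}}\pars*{1 + \variable^{b}} - \pars*{1 + \variable^{a+b}}
	= \variable^{a} + \variable^{b}\enspace.
\end{equation*}
This polynomial has nonnegative coefficients: when $a \neq b$ they are $1$ at $\variable^a$ and $\variable^b$, and when $a = b$ the coefficient is $2$ at $\variable^a$. Hence the difference is $\polyGreater 0$, which is the claim.

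Combinatorially, a successor-closed subset of $\cycle{a+b}$ is either empty or the whole cycle. Each of these corresponds to a successor-closed subset of $\cycle{a} \disjointUnion \cycle{b}$, namely $\varnothing$ or $\cycle{a} \cup \cycle{b}$. The two extra subsets $\cycle{a}$ and $\cycle{b}$ account exactly for the difference $\variable^a + \variable^b$.

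There is no real obstacle here. The hypothesis $a,b>1$ is not needed for the inequality itself; it only ensures that the graphs are genuine cycles without self-loops, consistent with \zcref[S]{rmk:star-and-concentrated-cycle-rooted-star-triviality}.
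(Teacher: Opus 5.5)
Your proposal is correct and matches the paper's proof: both expand $\pars*{1 + \variable^{a}}\pars*{1 + \variable^{b}} - \pars*{1 + \variable^{a+b}} = \variable^{a} + \variable^{b} \polyGreater 0$ using \zcref[S]{cly:generating-function-cycle}. Your combinatorial reading and your remark that the hypothesis $a, b > 1$ is not needed for the algebra (only for the graphs to be genuine cycles) are both accurate additions.
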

\begin{proof}
	By direct computation and from \zcref[S]{cly:generating-function-cycle},
	\begin{align*}
		\generatingFunction{\cycle{a}}{} \cdot \generatingFunction{\cycle{b}}{}
		-
		\generatingFunction{\cycle{a + b}}{}
		&=\pars*{1 + \variable^a}\pars*{1 + \variable^b}
		-
		\pars*{1 + \variable^{a + b}} \\
		&=\variable^a + \variable^b \\
		&\polyGreater 0\enspace.
	\end{align*}
\end{proof}
\noindent We have a similar result for paths.
\begin{lemma}\label{lma:merging-path}
	For every integer $a, b > 1$,
	\begin{equation*}
		\generatingFunction{\pathGraph{a}}{} \cdot \generatingFunction{\pathGraph{b}}{}
		\polyGreater
		\generatingFunction{\pathGraph{a + b}}{}\enspace.
	\end{equation*}
	Note that the number of vertices and edges is preserved.
\end{lemma}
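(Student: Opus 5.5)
The plan is a direct coefficient comparison, in the style of \zcref[S]{lma:two-cycle-to-one-cycle,lma:cycle-to-path}. By \zcref[S]{cly:generating-function-path}, the claim reads
\begin{equation*}
	\pars*{\sum_{i=0}^{a+1} \variable^i}\pars*{\sum_{j=0}^{b+1} \variable^j} \polyGreater \sum_{t=0}^{a+b+1} \variable^t\enspace.
\end{equation*}
The right-hand side has coefficient $1$ at every degree $t \in \set*{0, \dotsc, a+b+1}$ and $0$ elsewhere. The left-hand side has nonnegative coefficients. So it suffices to show that $\coeff*{\variable^t}$ of the product is at least $1$ for each $t \le a+b+1$.

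The coefficient of $\variable^t$ in the product counts the pairs $(i,j)$ with $0 \le i \le a+1$, $0 \le j \le b+1$ and $i+j=t$. For each $t \le a+b+1$ I exhibit one such pair, splitting into two cases.
\begin{itemize}
	\item If $t \le a+1$, take $(i,j) = (t,0)$.
	\item If $a+1 < t \le a+b+1$, take $(i,j) = (a+1,\, t-a-1)$. Then $1 \le j \le b \le b+1$.
\end{itemize}
Hence every coefficient on the right is dominated, and the difference is coefficient-wise nonnegative.

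An alternative route is to expand the product and subtract, mirroring \zcref[S]{lma:cycle-to-path}. The product contains the term $1\cdot\sum_{t=0}^{a+1}\variable^t$. It also contains $\variable^{a+1}\cdot\sum_{j=1}^{b+1}\variable^j = \sum_{t=a+2}^{a+b+2}\variable^t$. Together these two terms already cover $\sum_{t=0}^{a+b+1}\variable^t$, and the remaining terms are nonnegative.

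There is no real obstacle; the argument does not even use $a,b>1$ and works for all $a,b \ge 0$. The one point to flag is the remark after the statement. $\pathGraph{a}\disjointUnion\pathGraph{b}$ has $a+b+2$ vertices while $\pathGraph{a+b}$ has only $a+b+1$, so the edge count is preserved but the vertex count is off by one. The inequality is still exactly as stated. If vertex preservation is needed later, I would instead compare with $\pathGraph{a+b}\disjointUnion\starGraph{0}$. That comparison is not implied by this lemma, so it would need a separate check.
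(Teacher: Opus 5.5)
Your proof is correct and is the same argument as the paper's: after substituting the path polynomials, you check that every coefficient of the product in degrees $0,\dots,a+b+1$ is at least $1$, and you make this explicit by exhibiting the pairs $(t,0)$ and $(a+1,t-a-1)$. Your side remark is also right: $P_a \sqcup P_b$ has $a+b+2$ vertices while $P_{a+b}$ has $a+b+1$, so the paper's note on vertex preservation is inaccurate, and the lower-bound argument actually needs the comparison with $P_{a+b}\sqcup S_0$, which does hold because the product's coefficients are at least $2$ in every degree $1,\dots,a+b+1$.
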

\begin{proof}
	By \zcref[S]{cly:generating-function-path}, we have
	\begin{equation*}
		\generatingFunction{\pathGraph{a}}{\variable} \cdot \generatingFunction{\pathGraph{b}}{\variable}
		-
		\generatingFunction{\pathGraph{a+b}}{\variable} = \pars*{\sum_{i=0}^{a + 1} \variable^i} \cdot \pars*{\sum_{i=0}^{b + 1} \variable^i} - \sum_{i=0}^{a + b + 1} \variable^i
		\polyGreater 0\enspace,
	\end{equation*}
	since for every integer $j \leq a + b + 1$, the coefficient of $\variable^j$ of the left term is at least $1$.
\end{proof}

We can finally prove the main theorem of this section.

\begin{proof}[Proof of {\zcref[S]{thm:upper-bound-extremal-2}}]
	Let $\graphG$ be any functional digraph with $\numberOfVertices$ vertices and $\numberOfEdges < \numberOfVertices$ edges.
	We prove the upper bound and its closed form, then the lower bound and its closed form.
	
	We obtain, from \zcref[S]{thm:upper-bound-extremal}, a graph $\graphH$ satisfying $\generatingFunction{\graphH}{} \polyGreater \generatingFunction{\graphG}{}$.
	Since every cycle length of $\graphH$ is at least $2$, letting $\cycleLengthVector$ be the constant vector $\pars*{2, \dotsc, 2}$ of length $\numberOfCycleRootedTree$, \zcref[S]{prop:cycle-length-space-monotone} gives $\graphI \defeq \graphIOf{\cycleLengthVector}$, which satisfies $\generatingFunction{\graphIOf{\cycleLengthVector}}{} \polyGreater \generatingFunction{\graphH}{} \polyGreater \generatingFunction{\graphG}{}$.
	This choice of $\cycleLengthVector$ ensures that the number of edges is preserved.

	Now we use \zcref[S]{lma:cycle-to-vertex-rooted-tree} to transform every pair of concentrated cycle-rooted star and vertex-rooted star in $\graphI$ into a single vertex-rooted star.
	This works since our assumption $\numberOfEdges < \numberOfVertices$ ensures there is at least one vertex-rooted star, counting $\starGraph{0}$.
	Indeed, $\numberOfEdges = \numberOfVertices$ occurs if and only if all connected components are cycle-rooted trees.
	Let $\graphJ$ be the resulting graph.
	\zcref[S]{lma:product-rule} ensures that $\graphJ \polyGreater \graphI$.
	Furthermore, it has the same number of vertices and edges as $\graphI$, and thus, as $\graphG$.
	Note that $\graphJ$ is now completely determined by a tuple of positive integers $\treeSizes{}$.
	We make this representation explicit using the notation:
	\begin{equation*}
		\parameterizedGraph{\treeSizes{}} \defeq \largeDisjointUnion_{i = 1}^{\card*{\treeSizes{}}} \starGraph{\treeSize{i} - 1}\enspace.
	\end{equation*}
	From \zcref[S]{cly:generating-function-vertex-rooted-star}, its generating function is
	\begin{equation*}
		\objective{\treeSizes{}} \defeq \generatingFunction{\parameterizedGraph{\treeSizes{}}}{\variable} = \prod_{i = 1}^{\card*{\treeSizes{}}} \pars*{1 + \variable \pars*{1 + \variable}^{\treeSize{i} - 1}}\enspace.
	\end{equation*}
	We also define the function $\edgeCount{}$, which counts the number of edges in $\parameterizedGraph{\treeSizes{}}$.
	Formally, we define
	\begin{equation*}
		\edgeCount{} \colon \treeSizes{} \mapsto \sum_{i=1}^{\card*{\treeSizes{}}} \pars*{\treeSize{i} - 1}\enspace.
	\end{equation*}

	The final step is to optimize with respect to the fixed number of edges and vertices.
	In symbols, we have to find a tuple $\maximalTreeInnerSizes$ solving the following optimization problem
	\begin{equation*}
		\max_{\substack{\treeSizes{} \partitionsOf \numberOfVertices \\ \edgeCount{\treeSizes{}}  = \numberOfEdges}} \objective{\treeSizes{}}\enspace,
	\end{equation*}
	over the coefficient-wise order.
	The integer partition constraint implies $\edgeCount{\treeSizes{}} = \numberOfVertices - \card*{\treeSizes{}}$ and therefore, the edge constraint is equivalent to
	\begin{equation}\label{equ:partition-size}
		\card*{\treeSizes{}} = \numberOfVertices - \numberOfEdges\enspace. 
	\end{equation}

	Pick two distinct indices $i,j \in [\card*{\treeSizes{}}]$ such that $1 < \treeSize{i} \leq \treeSize{j}$.
	Note that such elements always exist since \zcref[S]{equ:partition-size} holds.
	We prove that
	\begin{equation}\label{equ:sparce-is-better}
		\objective{\treeSizes{}}\big|_{\treeSize{i} \mapsto \treeSize{i} - 1,\; \treeSize{j} \mapsto \treeSize{j} + 1} \;\polyGreater\; \objective{\treeSizes{}}\enspace.
	\end{equation}
	Informally, a smaller element of the partition can give a unit to a larger one while only increasing the objective.
	Assuming \zcref[S]{equ:sparce-is-better} holds, it is easy to see that 
	\begin{equation*}
		\maximalTreeInnerSizes = (\numberOfEdges + 1, \underbrace{1, \dotsc, 1}_{\mathclap{\text{$\numberOfVertices - \numberOfEdges - 1$ times}}})\enspace,
	\end{equation*}
	as we can always reach a permutation of this tuple after finitely many applications of this transformation. 
	Hence, proving \zcref[S]{equ:sparce-is-better} is enough to conclude the upper bound part.

	Let $\smallerElement \defeq \treeSize{i} - 1$ and $\largerElement \defeq \treeSize{j} - 1$ be two integers.
	From \zcref[S]{cly:generating-function-vertex-rooted-star},
	\begin{align*}
		\generatingFunction{\starGraph{\largerElement+1}}{\variable} \cdot \generatingFunction{\starGraph{\smallerElement-1}}{\variable}
		-\generatingFunction{\starGraph{\largerElement}}{\variable} \generatingFunction{\starGraph{\smallerElement}}{\variable}
		&=
		\left(1+\variable(1+\variable)^{\largerElement+1}\right)
		\left(1+\variable(1+\variable)^{\smallerElement-1}\right)
		\\
		&\quad-
		\left(1+\variable(1+\variable)^\largerElement\right)
		\left(1+\variable(1+\variable)^\smallerElement\right)
		\\
		&\positiveProportional
		(1+\variable)^{\largerElement}
		\left(1 + \variable - 1\right) \\
		&\quad -(1+\variable)^{\smallerElement - 1}
		\left(1 + \variable - 1\right) \\
		&=\variable (1 + \variable)^{\largerElement} - \variable (1 + \variable)^{\smallerElement - 1} \\
		&\positiveProportional (1 + \variable)^{\largerElement - (\smallerElement - 1)} - 1 \\
		&= (1 + \variable)^{\largerElement - \smallerElement + 1} - 1 \\
		&\polyGreater 0\enspace,
	\end{align*}
	where the last inequality follows from $\largerElement \geq \smallerElement$.

	For the lower bound, we first use \zcref[S]{thm:upper-bound-extremal} to get $\graphF$ such that $\generatingFunction{\graphF}{} \polySmaller \generatingFunction{\graphG}{}$.
	Then, we use \zcref[S]{lma:merging-path} to merge all paths into a single path.
	The assumption $\numberOfEdges > 1$ ensures that the resulting path has length $\pathLength > 1$ or that there is a cycle with length $\cycleLength{} > 1$.
	In the first case, we transform the single path into a cycle of length $\pathLength$ and an isolated vertex while preserving the number of edges and vertices using \zcref[S]{lma:cycle-to-path}.
	Therefore, both cases result in a graph with only cycles and bare vertices.
	Applying \zcref[S]{lma:two-cycle-to-one-cycle} repeatedly, we merge all resulting cycles into one.
	The desired graph follows from the constraints on the number of vertices and edge and its generating function from \zcref[S]{cly:generating-function-cycle} and \zcref[S]{lma:product-rule}.
\end{proof}

\section{A probabilistic variant of the Bollobás set-pair inequality}
\label{sec:probabilistic-bollobas}

In this section, we prove two probabilistic extensions of set-pair results.

\begin{theorem}
	\label{thm:probabilistic-combinatorial-lemma}
	Let $\setA_1, \dotsc, \setA_\numberOfPairs, \setB_1, \dotsc, \setB_\numberOfPairs$ be some subsets of some universe satisfying $\setA_i \cap \setB_i = \varnothing$ and $\card{\setA_i} + \card{\setB_i} \leq \setsSize$. Then, if $\numberOfPairs > \binom{\setsSize}{\setsSize/2}$, we have
	\begin{align*}
		\Pr_{\indicesSet \leftarrow \binom{[\numberOfPairs]}{\indicesSetSize}}\left[ \exists i \in \indicesSet \quad  \exists j \in [\numberOfPairs] \setminus \indicesSet \quad \setA_i \cap \setB_j \subseteq \setB_i\right] &\geq 1 - \prod_{i = 0}^{\numberOfEdges} \frac{\numberOfPairs - \indicesSetSize - i}{\numberOfPairs - i} - \frac{\indicesSetSize}{\numberOfPairs} \\
		&\geq 1 - \exp\left(-\frac{\indicesSetSize(\numberOfEdges+1)}{\numberOfPairs}\right) - \frac{\indicesSetSize}{\numberOfPairs}\enspace,
	\end{align*}
	where $\numberOfEdges \defeq \numberOfPairs - \binom{\setsSize}{\setsSize/2}$.
\end{theorem}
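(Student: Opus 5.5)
The plan is to encode the bad event as a successor-closed set in a digraph on the index set $[\numberOfPairs]$ and then apply the upper bound of \zcref[S]{thm:upper-bound-extremal-2}.

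\textbf{The digraph.} Put an edge $\connect{i}{j}$ for $i \neq j$ whenever $\setA_i \cap \setB_j \subseteq \setB_i$. As in the proof of \zcref[S]{lma:combinatorial-lemma}, this is equivalent to $\setA_i \cap \setB_j = \varnothing$. The complementary event "no $i \in \indicesSet$, $j \notin \indicesSet$ with an edge $\connect{i}{j}$" says precisely that $\indicesSet \in \successorClosedSubsets{\graphG}$ for this digraph $\graphG$. So it suffices to show that the number of successor-closed $\indicesSetSize$-subsets of $\graphG$ is at most $\binom{\numberOfPairs-\numberOfEdges-1}{\indicesSetSize}+\binom{\numberOfPairs-1}{\indicesSetSize-1}$.

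\textbf{Many active vertices.} Let $I$ be the set of inactive vertices of $\graphG$. For distinct $i,j \in I$ there is no edge $\connect{i}{j}$, so $\setA_i\cap\setB_j \not\subseteq \setB_i$. Hence the subfamily indexed by $I$ satisfies the hypothesis of \zcref[S]{lma:combinatorial-lemma}, which gives $\card{I} \le \binom{\setsSize}{\setsSize/2}$. Therefore at least $\numberOfPairs - \binom{\setsSize}{\setsSize/2} = \numberOfEdges$ vertices are active.

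\textbf{Reduction to a functional digraph.} Choose exactly $\numberOfEdges$ active vertices, keep one out-edge from each, and delete all other edges. The result $\graphG'$ is functional on $\numberOfPairs$ vertices with exactly $\numberOfEdges$ edges. Moreover $\numberOfEdges \le \numberOfPairs - 1 < \numberOfPairs$, since $\binom{\setsSize}{\setsSize/2}\ge 1$. Repeated use of \zcref[S]{lma:monotonicity-under-edge-deletion} gives $\generatingFunction{\graphG}{} \polySmaller \generatingFunction{\graphG'}{}$. \zcref[S]{thm:upper-bound-extremal-2} then gives $\generatingFunction{\graphG'}{}\polySmaller (1+\variable)^{\numberOfPairs-\numberOfEdges-1}(1+\variable(1+\variable)^{\numberOfEdges})$. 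The coefficient of $\variable^{\indicesSetSize}$ on the right is $\binom{\numberOfPairs-\numberOfEdges-1}{\indicesSetSize}+\binom{\numberOfPairs-1}{\indicesSetSize-1}$, as required.

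\textbf{Converting counts to probabilities.} Divide by $\binom{\numberOfPairs}{\indicesSetSize}$. The second term gives $\binom{\numberOfPairs-1}{\indicesSetSize-1}/\binom{\numberOfPairs}{\indicesSetSize}=\indicesSetSize/\numberOfPairs$. The first gives
\begin{equation*}
	\frac{\binom{\numberOfPairs-\numberOfEdges-1}{\indicesSetSize}}{\binom{\numberOfPairs}{\indicesSetSize}}=\frac{(\numberOfPairs-\numberOfEdges-1)!\,(\numberOfPairs-\indicesSetSize)!}{(\numberOfPairs-\numberOfEdges-1-\indicesSetSize)!\,\numberOfPairs!}=\prod_{i=0}^{\numberOfEdges}\frac{\numberOfPairs-\indicesSetSize-i}{\numberOfPairs-i},
\end{equation*}
which is interpreted as $0$ when $\indicesSetSize > \numberOfPairs-\numberOfEdges-1$; this is consistent because a factor then vanishes. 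This yields the first inequality. For the second, each factor satisfies $\frac{\numberOfPairs-\indicesSetSize-i}{\numberOfPairs-i}=1-\frac{\indicesSetSize}{\numberOfPairs-i}\le 1-\frac{\indicesSetSize}{\numberOfPairs}\le \exp(-\indicesSetSize/\numberOfPairs)$. Multiplying the $\numberOfEdges+1$ factors, all nonnegative, gives the exponential bound.

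\textbf{Main obstacle.} The only delicate step is the reduction to a functional digraph. One must check that the inactive set inherits Bollobás's hypothesis, which holds because the hypothesis only involves pairs inside $I$. One must also ensure that trimming to a functional digraph with exactly $\numberOfEdges$ edges only enlarges the family of successor-closed sets. Everything else is direct bookkeeping.
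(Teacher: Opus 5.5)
Your proposal is correct and is essentially the paper's proof: build the violation digraph on $[n]$, apply the combinatorial lemma to the inactive vertices to get at least $k$ active ones, prune to a functional digraph with $k$ edges (which only enlarges the successor-closed family), apply the upper bound of the extremal theorem for fixed vertex and edge counts, and convert the coefficient bound $\binom{n-k-1}{q}+\binom{n-1}{q-1}$ into a probability. One small wording fix: when $q \ge n-k$ the factors $\frac{n-q-i}{n-i}$ are not all nonnegative (some are negative), but one of them is zero, so the product is $0$ and the exponential bound holds trivially, as you already noted for the first inequality.
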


Before proving this theorem, we present a variant based on the \emph{skew inequality}.
\begin{corollary}[A probabilistic skew Bollobás inequality]
	\label{cor:skew-probabilistic-bollobas}
	Let $\setsSize \in \N$ and let $\setA_1, \dotsc, \setA_\numberOfPairs, \setB_1, \dotsc, \setB_\numberOfPairs \subseteq [\setsSize]$ satisfy $\setA_i \cap \setB_i = \varnothing$ for every $i$.
	Then, if $\numberOfPairs > \pars*{\setsSize+1}\binom{\setsSize}{\setsSize/2}$, we have
	\begin{align*}
		\Pr_{\indicesSet \leftarrow \binom{[\numberOfPairs]}{\indicesSetSize}}
		\left[ \exists i \in \indicesSet \ \exists j \in [\numberOfPairs] \setminus \indicesSet,\ i < j
		\quad \setA_i \cap \setB_j \subseteq \setB_i\right]
		&\geq 1 - \prod_{i = 0}^{\numberOfEdges} \frac{\numberOfPairs - \indicesSetSize - i}{\numberOfPairs - i} - \frac{\indicesSetSize}{\numberOfPairs} \\
		\geq 1 - \exp\left(-\frac{\indicesSetSize(\numberOfEdges+1)}{\numberOfPairs}\right) - \frac{\indicesSetSize}{\numberOfPairs}\enspace,
	\end{align*}
	where $\numberOfEdges \defeq \numberOfPairs - \pars*{\setsSize+1}\binom{\setsSize}{\setsSize/2}$.
\end{corollary}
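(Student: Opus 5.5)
We prove the corollary by the same route as \zcref[S]{thm:probabilistic-combinatorial-lemma}, replacing \zcref[S]{lma:combinatorial-lemma} with \zcref[S]{lma:skew-combinatorial-lemma}. The first step builds the violation digraph $\graphG_0$ on the vertex set $[\numberOfPairs]$. It has an edge $\connect{i}{j}$ whenever $i<j$ and $\setA_i\cap\setB_j\subseteq\setB_i$. The event in the statement fails exactly when no edge leaves $\indicesSet$, i.e. when $\indicesSet\in\successorClosedSubsets{\graphG_0}$. The failure probability is therefore at most $\coeff*{\variable^\indicesSetSize}\generatingFunction{\graphG_0}{}/\binom{\numberOfPairs}{\indicesSetSize}$.

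The second step counts active vertices. Let $S$ be the set of inactive vertices, i.e. those $i$ with no $j>i$ satisfying the containment. Then the subfamily indexed by $S$ satisfies the skew hypothesis for all $i<j$ inside $S$. This is the key point: restricting a skew-valid family keeps the order, so \zcref[S]{lma:skew-combinatorial-lemma} applies and gives $\card{S}\le(\setsSize+1)\binom{\setsSize}{\setsSize/2}$. Hence at least $\numberOfEdges$ vertices are active.

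The third step reduces to a functional digraph. For each active vertex keep exactly one out-edge, and delete further active vertices' edges if needed, so that exactly $\numberOfEdges$ vertices are active. Call the result $\graphG$. It is functional with $\numberOfEdges$ edges, and $\numberOfEdges<\numberOfPairs$ since the threshold is positive. By \zcref[S]{lma:monotonicity-under-edge-deletion}, $\generatingFunction{\graphG_0}{}\polySmaller\generatingFunction{\graphG}{}$. Then \zcref[S]{thm:upper-bound-extremal-2} bounds the count of size-$\indicesSetSize$ successor-closed sets by $\binom{\numberOfPairs-\numberOfEdges-1}{\indicesSetSize}+\binom{\numberOfPairs-1}{\indicesSetSize-1}$.

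The last step divides by $\binom{\numberOfPairs}{\indicesSetSize}$. The second term gives exactly $\indicesSetSize/\numberOfPairs$. The first term gives $\binom{\numberOfPairs-\numberOfEdges-1}{\indicesSetSize}/\binom{\numberOfPairs}{\indicesSetSize}=\prod_{i=0}^{\numberOfEdges}\frac{\numberOfPairs-\indicesSetSize-i}{\numberOfPairs-i}$, by telescoping ratios of consecutive binomials. Each factor is at most $1-\indicesSetSize/\numberOfPairs\le e^{-\indicesSetSize/\numberOfPairs}$, which yields the exponential bound.

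The main obstacle is the second step: checking that the inactive set really inherits the skew hypothesis. Note that the hypothesis is not required to fail only for $j\notin S$. Inactivity of $i\in S$ rules out violations with every $j>i$, including $j\in S$, so the argument goes through. If the intended event is meant with $\indicesSet$-internal ordering rather than a global one, this is exactly where it would need rechecking.
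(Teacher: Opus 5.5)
Your proof is correct and follows the paper's own argument. You build the digraph with an edge $i\to j$ whenever $i<j$ and $A_i\cap B_j\subseteq B_i$, use the skew lemma to get at least $k$ active vertices, delete edges down to a functional digraph with $k<n$ edges, apply the upper bound of the extremal theorem, and telescope the binomial ratio. Your explicit check that the inactive indices inherit the skew hypothesis in the induced order is exactly the step the paper leaves implicit, and the paper's extra remark that the digraph is acyclic is not needed.
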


We now prove the main theorem.

\begin{proof}[Proof of {\zcref[S]{thm:probabilistic-combinatorial-lemma}}]
	Suppose $\setA_1, \dotsc, \setA_\numberOfPairs, \setB_1, \dotsc, \setB_\numberOfPairs$ satisfy the assumption of \zcref[S]{thm:probabilistic-combinatorial-lemma}.
	Consider the digraph $\graphG$ over vertices $[\numberOfPairs]$ such that there is an edge between $i$ and $j$ if and only if $ \setA_i \cap \setB_j \subseteq \setB_i$.
	We know from \zcref[S]{lma:combinatorial-lemma} that there are at least $\numberOfEdges \defeq \numberOfPairs - \binom{\setsSize}{\setsSize/2}$ indices $i \in [\numberOfPairs]$ such that there exists a distinct $j \in [\numberOfPairs]$ such that $ \setA_i \cap \setB_j \subseteq \setB_i$ (otherwise we would contradict \zcref[S]{lma:combinatorial-lemma}).
	In other words, $\graphG$ has at least $\numberOfEdges$ active vertices.

	Using \zcref[S]{lma:monotonicity-under-edge-deletion}, we remove edges until there are only $\numberOfEdges$ active vertices with outgoing degree one.
	We call the resulting digraph $\graphH$.
	Observe that $\graphH$ is functional, has $\numberOfEdges$ edges, $\numberOfPairs$ vertices and satisfies $\generatingFunction{\graphH}{} \polyGreater \generatingFunction{\graphG}{}$.

	For every $\indicesSetSize \in [\numberOfPairs]$, the upper bound of \zcref[S]{thm:upper-bound-extremal-2} implies that the number of successor closed $\indicesSetSize$-subsets is at most
	\begin{align*}
		\coeff*{\variable^\indicesSetSize} \generatingFunction{\graphH}{\variable} &= \coeff*{\variable^\indicesSetSize} \pars*{1 + \variable}^{\numberOfVertices - \numberOfEdges - 1} \pars*{1 + \variable \pars*{1 + \variable}^{\numberOfEdges}} \\ 
		&= \coeff*{\variable^\indicesSetSize} \pars*{1 + \variable}^{\numberOfVertices - \numberOfEdges - 1} 
		+
		 \coeff*{\variable^{\indicesSetSize - 1}} \pars*{1 + \variable}^{\numberOfVertices - 1}\\
		&= \binom{\numberOfVertices - \numberOfEdges - 1}{\indicesSetSize} + \binom{\numberOfVertices - 1}{\indicesSetSize - 1}\enspace.
	\end{align*} 
	Thus, the probability that we get a set $\indicesSetSize$-size subset $\indicesSet$ with at least one outgoing edge, or equivalently,
	\begin{equation*}
		\exists i \in \indicesSet \quad  \exists j \in [\numberOfPairs] \setminus \indicesSet \quad \setA_i \cap \setB_j \subseteq \setB_i\enspace,
	\end{equation*}
	is at least
	\begin{align*}
		1 - \frac{\binom{\numberOfVertices - \numberOfEdges - 1}{\indicesSetSize} + \binom{\numberOfVertices - 1}{\indicesSetSize - 1}}{\binom{\numberOfVertices}{\indicesSetSize}}
		&= 1 - \frac{\binom{\numberOfVertices-\numberOfEdges-1}{\indicesSetSize}}{\binom{\numberOfVertices}{\indicesSetSize}} - \frac{\binom{\numberOfVertices-1}{\indicesSetSize-1}}{\binom{\numberOfVertices}{\indicesSetSize}} \\
		&= 1 - \prod_{i=0}^{\numberOfEdges}\frac{\numberOfVertices-\indicesSetSize-i}{\numberOfVertices-i} - \frac{\indicesSetSize}{\numberOfVertices}\enspace,
	\end{align*}
	as desired.

	The second inequality is obtained by noticing that
	\begin{align*}
		\prod_{i = 0}^{\numberOfEdges} \frac{\numberOfPairs - \indicesSetSize - i}{\numberOfPairs - i}
		&= \frac{\binom{\numberOfPairs - \indicesSetSize}{\numberOfEdges + 1}}{\binom{\numberOfPairs}{\numberOfEdges + 1}}\\
		&= \prod_{i = 0}^{\numberOfEdges} \pars*{1 - \frac{\indicesSetSize}{\numberOfPairs - i}}
		\leq \prod_{i = 0}^{\numberOfEdges} \exp\left(-\frac{\indicesSetSize}{\numberOfPairs - i}\right)
		= \exp\left(-\indicesSetSize\sum_{i=0}^{\numberOfEdges}\frac{1}{\numberOfPairs-i}\right)\\
		&\leq \exp\left(-\frac{\indicesSetSize(\numberOfEdges+1)}{\numberOfPairs}\right)\enspace,
	\end{align*}
	using, respectively, the telescoping falling-factorial identity, $1-t\le e^{-t}$ applied termwise, and $\numberOfPairs - i \leq \numberOfPairs$ for every $i \in \set*{0,\dotsc,\numberOfEdges}$.
\end{proof}

Finally, we prove the corollary.

\begin{proof}[Proof of {\zcref[S]{cor:skew-probabilistic-bollobas}}]
	Identical to the proof of \zcref[S]{thm:probabilistic-combinatorial-lemma}, with \zcref[S]{lma:skew-combinatorial-lemma} in place of \zcref[S]{lma:combinatorial-lemma}: build the digraph $G$ on $[\numberOfPairs]$ with an edge $i\to j$ whenever $i<j$ and $\setA_i\cap\setB_j\subseteq\setB_i$.
	By construction, every edge satisfies $i<j$, so $G$ is acyclic; \zcref[S]{lma:skew-combinatorial-lemma} guarantees at least $\numberOfEdges$ active vertices, and the remainder of the argument (edge deletion to functionality, upper bound of \zcref[S]{thm:upper-bound-extremal-2}, the binomial telescoping) proceeds exactly as before.
\end{proof}

\printbibliography

\end{document}